\newif\ifpictures
\numberwithin{equation}{section}
\newtheorem{theorem}{Theorem}[section]
\newtheorem{thm}[theorem]{Theorem}
\newtheorem{proposition}[theorem]{Proposition}
\newtheorem{lemma}[theorem]{Lemma}
\newtheorem{corollary}[theorem]{Corollary}
\theoremstyle{definition}
\newtheorem{definition}[theorem]{Definition}
\newtheorem{example}[theorem]{Example}
\newtheorem{remark}[theorem]{Remark}
\DeclareMathOperator{\conv}{conv}
\DeclareMathOperator{\cone}{cone}
\DeclareMathOperator{\interior}{int}
\DeclareMathOperator{\relint}{relint}
\DeclareMathOperator{\supp}{supp}
\DeclareMathOperator{\rec}{rec}
\DeclareMathOperator{\inter}{int} %
\DeclareMathOperator{\pos}{pos} %
\newcommand{\zerob}{\mathbf{0}}
\newcommand{\oneb}{\mathbf{1}}
\newcommand{\N}{\mathbb{N}}
\newcommand{\R}{\mathbb{R}}
\newcommand{\cA}{\mathcal{A}}
\newcommand{\Nsf}{\mathsf{N}}
\author{Helen Naumann}
\author{Thorsten Theobald}
\address{Helen Naumann, Thorsten Theobald:
	Goethe-Universit\"at, FB 12 -- Institut f\"ur Mathematik,
	Postfach 11 19 32, D--60054 Frankfurt am Main, Germany}
\email{\{naumann,theobald\}@math.uni-frankfurt.de}
\title[]{Sublinear circuits for polyhedral sets}
\date{\today}
\begin{document}
\begin{abstract}
Sublinear circuits are generalizations of the affine circuits in matroid theory,
and they arise as the convex-combinatorial core
underlying constrained non-negativity 
certificates of exponential sums and of polynomials based on the 
arithmetic-geometric inequality. Here, we study the polyhedral combinatorics
of sublinear circuits for polyhedral constraint sets.

We give results on the relation between the sublinear circuits and their supports
and provide necessary as well as sufficient criteria for sublinear circuits.
Based on these characterizations, we provide some explicit results and enumerations
for two prominent polyhedral cases, namely the non-negative orthant and the 
cube $[-1,1]^n$.
\end{abstract}

   \maketitle

\section{Introduction}

Let $\cA$ be a non-empty finite subset of $\R^n$ and
$\R^{\cA}$ denote the set of real vectors 
whose components are indexed by the set $\mathcal{A}$.

For $\beta \in \cA$, write
\begin{equation}\label{eq:nbeta}
  N_{\beta} = \left\{ \nu \in \R^{\cA} \,:\, \nu_{\setminus \beta} \geq 
  \zerob,~ \sum_{\alpha \in \cA} \nu_{\alpha} = 0 \right\}
\end{equation}
for the cone of vectors in $\R^{\cA}$ whose entries sum to~0 and which may only have a negative entry in 
component $\beta$. Here, 
 $\nu_{\setminus \beta}$ abbreviates the vector in $\R^{\cA \setminus \{\beta\}}$
 which consists of all the components of $\nu$ except the component indexed with 
 $\beta$. 
 
In the context of non-negative polynomials and non-negative exponential sums, Murray
and the authors \cite{mnt-2020} have recently introduced the following 
generalization of the simplicial circuits of an affine matroid.
A non-zero vector $\nu^* \in N_\beta$ is called a \textit{sublinear circuit of 
$\mathcal{A}$ with respect to a given convex set $X$} (for short, \emph{$X$-circuit}) if
\begin{enumerate}
\item $\sup_{x \in X} ((-\cA \nu^*)^T x) < \infty$,
\item if $\nu \mapsto {\sup_{x \in X} ((-\cA \nu)^T x) }$ is linear on a two-dimensional cone in $N_{\beta}$, then $\nu^*$ is not in the relative interior of that cone.

\end{enumerate}
Here, $\cA$ is treated as a linear operator $\cA:\R^\cA\rightarrow \R^n$, $\nu\mapsto\sum_{\alpha\in\cA} \alpha\nu_\alpha$.

In the special case $X=\R^n$, the first condition is equivalent 
to $\cA \nu^* = \mathbf{0}$, which together with the second condition 
tells us that $\nu^*$ is a circuit of the affine
matroid with ground set $\mathcal{A} \subset \R^n$
(see, for example, \cite{FdW-2019,oxley-book}). Note that these 
$\R^n$-circuits are uniquely determined (up to scaling) by their supports. Moreover, the 
condition $\nu^* \in N_\beta$ enforces that the convex hull of the support 
$\supp\nu^* := \{\alpha \,:\, \nu^{*}_{\alpha} \neq 0 \}$ forms a simplex (possibly of dimension less than $n$) and exactly one element in $\supp\nu^*$ 
is contained in the relative interior of this simplex.
By the identification of circuits $\nu$ with their supports,
it is customary to call a subset $A \subset \cA$ a \emph{simplicial circuit} if its convex hull
$\conv(A)$ forms a simplex and the relative interior $\relint \conv(A)$ contains 
exactly one element of $A$. See Figure~\ref{fi:circuit1}.

\ifpictures
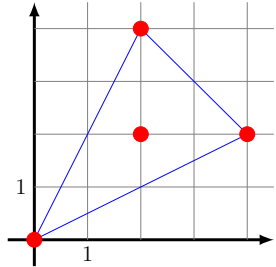
\begin{figure}[h]
\begin{tikzpicture}

  \useasboundingbox (-0.5,-0.5) rectangle (5,4);
    \scope[transform canvas={scale=.7}]
    \coordinate (Origin)   at (0,0);
    \coordinate (xaxismin) at (-0.5,0);
    \coordinate (xaxismax) at (4.5,0);
    \coordinate (yaxismin) at (0,-0.5);
    \coordinate (yaxismax) at (0,4.5);
    \draw [ultra thick, black,-latex] (xaxismin) -- (xaxismax); 
    \draw [ultra thick, black,-latex] (yaxismin) -- (yaxismax);
 
     \draw [gray] (0,1) -- (4.5,1);
     \draw [gray] (0,2) -- (4.5,2);
     \draw [gray] (0,3) -- (4.5,3);
     \draw [gray] (0,4) -- (4.5,4);

     \draw [gray] (1,0) -- (1,4.5);
     \draw [gray] (2,0) -- (2,4.5);
     \draw [gray] (3,0) -- (3,4.5);
     \draw [gray] (4,0) -- (4,4.5);

     \draw [blue] (0,0) -- (4,2) -- (2,4) -- (0,0);

     \filldraw [red] (0,0) circle (4pt);
    \filldraw [red] (2,4) circle (4pt);
    \filldraw [red] (4,2) circle (4pt);
    \filldraw [red] (2,2) circle (4pt);

    \draw [] node[below] at (1,0) {$1$}; 
    \draw [] node[left] at (0,1) {$1$}; 

   \endscope
\end{tikzpicture}

\caption{An $\R^2$-circuit $\lambda = (\frac{1}{3},-1,\frac{1}{3},\frac{1}{3})$ of an affine matroid supported on 
 $\cA = \{ (0,0)^T, (2,2)^T, (2,4)^T, (4,2)^T \}$ visualized in terms of its support.}
\label{fi:circuit1}
\end{figure}
\fi

Sublinear circuits appear naturally in the study of non-negative polynomials and,
more generally, of non-negative exponential sums 
$\sum_{\alpha \in \cA} c_{\alpha} \exp(\alpha^T x)$. In the framework of exponential sums,
 Murray, Chandrasekaran and Wierman \cite{mcw-2019} have shown that
 the set of exponential sums (also denoted as \emph{signomials})
 \[
  \sum_{\alpha \in \cA} c_{\alpha} \exp({\alpha^T x}) 
\]
which have at most one negative term and which are non-negative on $X$
can be characterized in terms of a relative entropy program. Sums of such
exponential sums are non-negative as well.
The cone of exponential sums which admit such a non-negativity 
certificate is called the \emph{$X$-SAGE cone} (or \emph{conditional SAGE cone})
supported on $\cA$ and is denoted $C_X(\cA)$. Here, the acronym  
SAGE stands for \emph{Sums of Arithmetic-Geometric Exponentials}
\cite{chandrasekaran-shah-2016}. This cone yields non-negativity
certificates for a subclass of polynomials and signomials and thus provides
a complement to non-negativity certificates based on sums of squares. It
is possible to combine these techniques, see \cite{karaca-2017}.

The introduction and the study of sublinear circuits
is motivated by the following guiding questions:
\begin{enumerate}
\item For $f \in C_X(\mathcal{A})$, there is often more than
  one way to write $f$ as a sum of exponential sums 
  which have only one negative
  term and which are non-negative on $X$. Are there distinguished
  representations among them?
\item Can the $X$-SAGE cone be naturally decomposed as a Minkowski
  sum of smaller subcones?
\item How can the convex geometric properties, such as the extremal
  rays, of the $X$-SAGE cone be characterized?
\end{enumerate}
Here, the second and the third question can be seen as geometric
viewpoints of the first question.
By \cite{mnt-2020},
the conditional SAGE cone $C_X(\mathcal{A})$ can
be decomposed as a Minkowski sum, where each non-trivial
summand refers to the $X$-SAGE exponentials induced by a sublinear circuit (see Proposition~\ref{prop:decomp1} 
for a formal statement). Therefore, the sublinear circuits can
be seen as a convex-combinatorial core underlying the conditional SAGE cone.
In the unconstrained setting, the circuit viewpoint has
been used prominently in the works of Reznick \cite{reznick-1989}, 
Iliman and de Wolff \cite{iliman-dewolff-resmathsci} 
as well as Pantea, Koeppl and Craciun \cite{Pantea2012} on non-negative 
polynomials.

One step further, a reducibility concept for sublinear circuits provides 
a non-redundant decomposition of the conditional SAGE cone
in terms of \emph{reduced circuits}. This reducibility
notion generalizes the reducibility 
notion for the unconstrained situation which
was introduced in \cite{knt-2020}, see also \cite{FdW-2019}.
The reduced $\R^n$-circuits are the key concept to characterize the
extremal rays of the unconstrained SAGE cone, since the reduced
$\R^n$-circuits induce extremal rays.
In generalization of this, the reduced
sublinear circuits facilitate to study the extremal rays of the 
$X$-SAGE cone \cite{mnt-2020},
see Proposition~\ref{prop:decomp2} for a formal statement.

From a more general point of view, 
sublinear circuits generalize the combinatorial concepts
known from an affine matroid, by taking additionally into account
a convex constraint set $X$. 
As such, sublinear circuits enlarge the tool set 
of convex-combinatorial techniques in algebraic geometry and algebraic optimization,
see \cite{bpt-book,dLHK-book,joswig-theobald-book,
michalek-sturmfels-book,sturmfels-gbcp} for general 
background on the rich connections between these disciplines.

In the current paper, we study sublinear circuits for the situation that $X$ is polyhedral.
In this setting, the sublinear circuits can be exactly characterized in terms of the
normal fan of a certain polyhedron, see Proposition~\ref{prop:polyhedron_x_finite_circuits}.
This induces a rich polyhedral-combinatorial structure and makes these sublinear
circuits amenable to effective computations. For polyhedral
$X$, the number of sublinear circuits is finite, and this gives decompositions 
of the $X$-SAGE cones into \emph{finitely} many summands 
referring to the $X$-SAGE exponentials induced by a sublinear circuit.

Among the class of polyhedra, polyhedral cones exhibit particularly nice properties
and were in the focus of attention in earlier treatments.
Note that, as a very particular case, the unconstrained setting $X=\R^n$, which is
treated in \cite{FdW-2019,knt-2020,mcw-newton-poly}, also falls into the 
class of polyhedral cones.
The univariate case $\R_+$ was studied in detail in \cite{mnt-2020}.
Moreover, every univariate case can be transformed to one of the two conic
cases 
$\R$ (unconstrained case), 
$\R_+$ (one-sided infinity interval), or to the non-conic $[-1,1]$ (compact interval).
In the multivariate case, the polyhedra $\R^n$ (unconstrained case),
$\R_+^n$ (non-negative orthant) and the cube $[-1,1]^n$ provide prominent
examples. In contrast to the unconstrained case and to the non-negative orthant,
the cube $[-1,1]^n$ provides a non-conic case.

The goal of the current paper is to develop techniques for handling sublinear circuits,
which also provide an access towards approaching non-conic polyhedral sets.

\subsection*{Contributions}
1. We reveal some precise connections between sublinear circuits and their
  supports, see Lemma~\ref{le:support-lemma1}.
  In particular, we show that in general sublinear circuits are not uniquely
  determined by their supports, see Example~\ref{ex:counterex-support}.

2. We develop necessary and sufficient conditions for identifying $X$-circuits
  based on support conditions. See Theorems~\ref{th:convex_hull_lambda}
  and \ref{th:xcircuit_nonnegative_orthant}.

3. We give conditions for identifying reduced $X$-circuits which generalize
    the known characterizations for the unconstrained case. See 
    Theorems~\ref{th:combinatorial-reducibility1} 
    	and~\ref{th:connect-reduced-to-rn-1}.
  
4. Building upon the criteria for sublinear circuits, we
  study the prominent cases of the non-negative orthant $\R_+^n$ and 
  the cube $[-1,1]^n$ in detail,
  in particular the planar case and with regard to small support sets.
  Specifically, for $\mathcal{A} = \{ (i,j) \, : \, 1 \le i,j \le 3\} \subset \R^2$ and $X=[-1,1]^2$, there 
  are 132 circuits and 24 reduced $X$-circuits, which we classify.
  See Sections~\ref{se:cube} and~\ref{se:reducibility}.

5. As a specific consequence, we can exactly determine the extreme rays of 
  the univariate $[-1,1]$-SAGE cone, 
  see Theorem~\ref{extreme_ray_one_dim_polytope}.
\smallskip

For further recent work on the techniques for certifying non-negativity of
signomials and polynomials based on the SAGE cone and its variants, 
see \cite{averkov-2019,DKdW-2021,NaumannTheobald2021,
  wjyp-2020,wang-magron-2020}.
  
The paper is structured as follows. After collecting relevant concepts of
sublinear circuits and non-negative signomials in Section~\ref{se:prelim},
we study the connection of $X$-circuits and their supports in Section~\ref{se:supports}.
Section~\ref{se:conditions} deals with necessary and sufficient conditions
for sublinear circuits, and Section~\ref{se:cube} focuses on the case
of the cube $[-1,1]^n$. In Section~\ref{se:reducibility}, we provide criteria
for reduced sublinear circuits, which gives as a consequence the
characterization of the extreme rays of the $[-1,1]$-SAGE cone.
Section~\ref{se:conclusion} concludes the paper.

\subsection*{Acknowledgment.}
We thank Riley Murray for valuable discussions 
and the anonymous referees for their helpful
suggestions.
 
\section{Preliminaries\label{se:prelim}}
 
Throughout the paper, the symbol $\mathbf{0}$ denotes the zero vector,
$\oneb$ denotes the all-ones vector and $[m]$ abbreviates the set $\{1,\ldots, m\}$ for $m\in\N$.
For a given convex subset $X \subset \R^n$, denote by 
$\sigma_X(y) = \sup\{ y^T x : x \in X \}$ its support function.

\subsection{\texorpdfstring{$X$}{X}-circuits}For a non-empty
convex set $X$ and finite $\mathcal{A} \subset \R^n$,
we consider $X$-circuits as defined in the Introduction, where we note 
that the two defining conditions can also be expressed in terms of the
support function: then (1) becomes the condition 
$\sigma_X(-\cA \nu^\star) < \infty$ and
in (2), the mapping $\nu \mapsto \sigma_X(-\cA \nu)$ occurs.

A sublinear circuit $\lambda \in N_{\beta}$
is called \emph{normalized} if $\lambda_{\beta} = -1$, in which case the condition
${\bf 1}^T \lambda = 0$ in the definition of $N_{\beta}$ implies
$\sum_{\alpha \neq \beta} \lambda_{\alpha} =1$. For normalized $X$-circuits, we usually
employ the symbol $\lambda$, whereas we use the symbol $\nu$ for $X$-circuits which
are not necessarily normalized. For $X\subset\R^n$, denote by
$\Lambda_X(\cA, \beta)$ the set of normalized $X$-circuits of $\cA$ with negative entry corresponding to $\beta\in\cA$ and by $\Lambda_X(\cA):=\bigcup_{\beta \in \cA} \Lambda_X(\cA,\beta)$ the set of \textit{all} normalized $X$-circuits of $\cA$.
For example, in the univariate case $X=\R$ with 
$\mathcal{A} = \{\alpha_1, \ldots, \alpha_m \} \subset \R$,
the normalized $X$-circuits are
    \begin{align}
    \lambda \ = \ 
    \frac{\alpha_k-\alpha_j}{\alpha_k - \alpha_i} e^{(i)} 
    - e^{(j)} 
    + \frac{\alpha_j - \alpha_i}{\alpha_k - \alpha_i} e^{(k)}
    \quad \text{ for } i < j < k,\label{eq:Rn_circuits}
    \end{align}
where $e^{(i)}$ denotes the $i$-th unit vector in $\R^m$.
It is possible that a given support set $\cA \subset \R^n$ 
has no $\R^n$-circuits, but then every 
$\alpha \in \cA$ is an extreme point of $\conv\cA$.

\begin{example}\label{ex:3-point-support}
In the context of the conditional SAGE cone, we
can assume without loss of generality, that the convex set $X$ is closed.
In the one-dimensional case, up to translation and additive inversion, 
each closed, convex set is of the form
$X^{(1)}=\R$, 
$X^{(2)}=\R_+$ or $X^{(3)}=[-1,1]$.
For the support set $\cA=\{0,1,2\}$, it is instructive to list the
sublinear circuits with respect to the three sets $X^{(1)}$,
$X^{(2)}$ and $X^{(3)}$. The set $\Lambda^{(1)}$ of $X^{(1)}$-circuits is
$\R_+(1,-2,1)^T$, which is a special case of~\eqref{eq:Rn_circuits}.
The set $\Lambda^{(2)}$ of $X^{(2)}$-circuits is 
\[
  \Lambda^{(2)} = \Lambda^{(1)} \cup
   \R_+(0,-1,1)^T \cup \R_+(-1,0,1)^T \cup \R_{+}(-1,1,0)^T,
\]
and this is a special case of Proposition~\ref{prop:univariate1} below.
In particular, the element $(-1,1,0)^T$ is not an $X^{(1)}$-circuit,
because
\[
  \sigma_{X^{(1)}}(-\cA (-1,1,0)^T) = \sigma_{X^{(1)}}(-1)
  = \sup_{x \in \R}(-x) = \infty.
\]
The set $\Lambda^{(3)}$ of $X^{(3)}$-circuits is 
\[
  \Lambda^{(3)} = \Lambda^{(2)} \cup
   \R_+(0,1,-1)^T \cup \R_+(1,0,-1)^T \cup \R_{+}(1,-1,0)^T,
\]
which is a special case of Proposition~\ref{prop:circuits-univariate2}
proven in Section~\ref{se:supports}.
Note that, for example, the element $(1,-1,0)^T$ is not an $X^{(2)}$-circuit, 
as 
\[
  \sigma_{X^{(2)}}(-\cA (1,-1,0)^T) = \sigma_{X^{(2)}}(1)=\sup_{x\ge 0} x=\infty.
\]
\end{example}	

For polyhedral $X$, the sublinear circuits
can be characterized in terms of normal fans of polyhedra. We refer the
reader for background on normal fans to \cite[Chapter~7]{Ziegler}
(for the bounded case of polytopes), \cite[Section~5.4]{gkz-1994} or
\cite[Chapter~2]{sturmfels-gbcp}.
For each face $F$ of a polyhedron $P$, let 
\[
    \Nsf_P(F) = \{ w \,:\, z^T w = \sigma_P(w) ~\forall \, z\in F\}
\]
be the associated \emph{outer normal cone}.

The support function $\sigma_P$
of a polyhedron $P$ is linear on every outer
normal cone,
and the linear representation may be given by $\sigma_P(w) = z^T w$
for any $z \in F$.
The \textit{outer normal fan} of $P$ is the collection of all outer normal cones,
\[
    \mathcal{O}(P) = \{ \Nsf_P(F) \,:\, F \text{ is a face of } P \}.
\]
For a convex cone $K \subset \R^n$, denote by
$K^* := \{c\in \R^n: c^Tx\ge 0 \text{ for all }x\in K\}$ the \emph{dual cone}
and by $K^{\circ} := -K^*$ the \emph{polar}. For a set $S \subset \R^n$,
let 
$\rec(S) := \{ t \,:\, \exists s \in S \text{ such that } s + \lambda t \in S \; \forall\; \lambda \geq 0 \}$ denote its recession cone. Using these notations, the support of
$\mathcal{O}(P)$ coincides with $\rec(P)^{\circ}$.
The full-dimensional linearity domains of the support function $\sigma_P$
are the outer normal cones
of the vertices of $P$ (see also \cite[Section~1]{fillastre-izmestiev-2017}).

\begin{proposition}\cite{mnt-2020}
\label{prop:polyhedron_x_finite_circuits}
Let $X$ be a polyhedron. Then $\nu \in N_\beta \setminus \{\zerob\}$ is an
    $X$-circuit if and only if
     $\cone\{\nu\}$ is a ray in $\mathcal{O}(-\cA^T X + N_{\beta}^\circ)$.
    As a consequence, there are only finitely many normalized $X$-circuits.
\end{proposition}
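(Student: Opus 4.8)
Throughout write $P:=-\cA^T X+N_{\beta}^{\circ}$, a subset of $\R^{\cA}$. The plan is to realise the map $\nu\mapsto\sigma_X(-\cA\nu)$ occurring in the definition of an $X$-circuit as (the finite part of) the support function $\sigma_P$, and then to read the two defining conditions of an $X$-circuit directly off the outer normal fan $\mathcal{O}(P)$. First I would record that $P$ is indeed a polyhedron: $-\cA^T X$ is a linear image of the polyhedron $X$, and $N_{\beta}^{\circ}$ is a polyhedral cone (the polar of the polyhedral cone $N_{\beta}$), so the Minkowski sum is polyhedral. Since the support function is additive on Minkowski sums and $(-\cA^T x)^T\nu=x^T(-\cA\nu)$, one gets $\sigma_P(\nu)=\sigma_X(-\cA\nu)+\sigma_{N_{\beta}^{\circ}}(\nu)$; and as $N_{\beta}^{\circ}$ is a cone, $\sigma_{N_{\beta}^{\circ}}$ equals $0$ on $(N_{\beta}^{\circ})^{\circ}=N_{\beta}$ (bipolar, $N_{\beta}$ being a closed convex cone) and $+\infty$ off $N_{\beta}$. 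Hence $\sigma_P(\nu)=\sigma_X(-\cA\nu)$ for $\nu\in N_{\beta}$ and $\sigma_P(\nu)=+\infty$ otherwise. Thus $\dom\sigma_P=\{\nu\in N_{\beta}:\sigma_X(-\cA\nu)<\infty\}$, which is the support $\rec(P)^{\circ}$ of $\mathcal{O}(P)$; being contained in the pointed cone $N_{\beta}$ (whose lineality space is $\{\zerob\}$), it is itself pointed, so the fan $\mathcal{O}(P)$ is pointed and its $1$-dimensional cones are honest rays. In these terms, condition (1) of an $X$-circuit says precisely that $\nu^{*}\in\dom\sigma_P=|\mathcal{O}(P)|$.

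For condition (2) I would invoke the standard fact that $\sigma_P$ is linear on a convex cone $\tau$ if and only if $\tau$ is contained in some outer normal cone $\Nsf_P(F)$: the "if" part is the linearity of $\sigma_P$ on each $\Nsf_P(F)$ recalled before the proposition, and the "only if" part follows, for $P$ a polyhedron, from the description of $\mathcal{O}(P)$ as a fan whose (maximal) cones are the linearity domains of $\sigma_P$ (cf.\ the references given in the excerpt). Now suppose $\cone\{\nu^{*}\}$ is a ray of $\mathcal{O}(P)$; then $\nu^{*}\in\dom\sigma_P$, so (1) holds, and it remains to check (2). If $\nu^{*}$ lay in the relative interior of some $2$-dimensional cone $\tau\subseteq N_{\beta}$ on which $\nu\mapsto\sigma_X(-\cA\nu)$, hence $\sigma_P$, is linear, then $\tau\subseteq\Nsf_P(F)$ for a face $F$; since $\cone\{\nu^{*}\}$ and $\Nsf_P(F)$ are cones of the same fan with $\cone\{\nu^{*}\}\subseteq\Nsf_P(F)$, the ray $\cone\{\nu^{*}\}$ is a face of $\Nsf_P(F)$. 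Picking $u$ so that $\nu^{*},u$ span the plane of $\tau$ and $\varepsilon>0$ small enough that $\nu^{*}\pm\varepsilon u\in\tau$ (possible as $\nu^{*}\in\relint\tau$), the face property applied to $\nu^{*}=\tfrac12(\nu^{*}-\varepsilon u)+\tfrac12(\nu^{*}+\varepsilon u)$ forces $\nu^{*}\pm\varepsilon u\in\cone\{\nu^{*}\}$, so $u$ is parallel to $\nu^{*}$, contradicting $\dim\tau=2$. Hence (2) holds, which establishes the "if" direction.

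For the "only if" direction I would take an $X$-circuit $\nu^{*}\in N_{\beta}\setminus\{\zerob\}$. By (1), $\nu^{*}\in|\mathcal{O}(P)|$, so there is a unique cone $\sigma_0\in\mathcal{O}(P)$ with $\nu^{*}\in\relint\sigma_0$; this $\sigma_0$ is pointed (being $\subseteq N_{\beta}$) and $\sigma_P$ is linear on it. If $\dim\sigma_0\ge2$, choose a $2$-dimensional subspace $V\subseteq\spann\sigma_0$ through $\nu^{*}$; then $\tau:=V\cap\sigma_0$ is a $2$-dimensional cone inside $N_{\beta}$ with $\nu^{*}\in\relint\tau$ on which $\sigma_P$, equivalently $\nu\mapsto\sigma_X(-\cA\nu)$, is linear, contradicting (2). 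Therefore $\dim\sigma_0\le1$, and since $\nu^{*}\neq\zerob$ and $\sigma_0$ is pointed, $\sigma_0=\cone\{\nu^{*}\}$ is a ray of $\mathcal{O}(P)$.

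Finally, a polyhedron has finitely many faces, so $\mathcal{O}(P)$ has finitely many rays. For every $X$-circuit $\nu^{*}\in N_{\beta}$ one has $\nu^{*}_{\beta}<0$ (otherwise $\nu^{*}\ge\zerob$ together with $\oneb^{T}\nu^{*}=0$ would force $\nu^{*}=\zerob$), so each ray of $\mathcal{O}(P)$ carries exactly one normalized $X$-circuit; letting $\beta$ range over the finite set $\cA$ then shows $\Lambda_X(\cA)$ is finite. I expect the main obstacle to be the linearity-domain fact used at the start of the second paragraph -- that the cones on which $\sigma_P$ is linear are exactly the subcones of the outer normal cones of $P$ -- together with the bookkeeping needed to always exhibit, or exclude, a $2$-dimensional witness cone lying inside $N_{\beta}$; everything else is routine manipulation of support functions and polar cones.
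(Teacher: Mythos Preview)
The paper does not prove this proposition; it is quoted from \cite{mnt-2020} without proof, so there is nothing in the present paper to compare your argument against. Your argument is correct and follows the natural route: identify $\nu\mapsto\sigma_X(-\cA\nu)$ on $N_\beta$ with the support function of $P=-\cA^T X+N_\beta^\circ$, and then translate the two $X$-circuit conditions into the statement that $\cone\{\nu^*\}$ is a one-dimensional cone of the normal fan $\mathcal{O}(P)$. The only point worth tightening is the ``only if'' part of the linearity-domain claim: for a general polyhedron $P$ (possibly without vertices) one should phrase it as ``any cone in $\rec(P)^\circ$ on which $\sigma_P$ is linear lies in some cone of $\mathcal{O}(P)$,'' which follows from the fact that the maximal cones of $\mathcal{O}(P)$ are exactly the maximal linearity domains of $\sigma_P$ on its effective domain. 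With that caveat, every step goes through.
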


If $X$ is a polyhedral cone, the situation simplifies, because the support 
function $\sigma_X(-\mathcal{A} \nu)$ of a circuit $\nu$ can
only attain the values zero and infinity. Namely, since 
$\mathcal{O}(-\cA^T X + N_{\beta}^\circ) =  (\cA^T X)^* \cap N_{\beta}$
and
\[
  (\cA^T X)^* 
=  \{\nu \, : \, \nu^T y \ge 0 \; \forall y \in \cA^T X\}
= \{\nu \, : \, (\cA\nu)^T x \ge 0 \; \forall x \in X\} 
= \{\nu \, : \, \sigma_X(-\cA\nu) \le 0\},
\]
the $X$-circuits $\nu \in N_{\beta}$ 
are precisely the edge generators of the polyhedral cone
$\{\nu \in N_{\beta} \, : \, \sigma_X(-\cA \nu) \le 0\}$.

In the univariate case with $\mathcal{A} = \{\alpha_1, \ldots, \alpha_m \} \subset \R$,
the sublinear circuits for the univariate cone $[0,\infty)$ have been determined in \cite{mnt-2020}:

\begin{proposition}\label{prop:univariate1} 
For $X = [0,\infty)$ and $\mathcal{A} = \{\alpha_1, \ldots, \alpha_m \} \subset \R$
with $\alpha_1 < \cdots < \alpha_m$,
the normalized $X$-circuits $\lambda \in \R^m$ are the vectors  either of the form
$\lambda = e^{(k)} - e^{(j)}$ for $j < k$ or of the form
    \begin{equation}
    \label{eq:circuits-r1}
    \lambda = 
       \frac{\alpha_k-\alpha_j}{\alpha_k - \alpha_i} e^{(i)} 
      - e^{(j)} 
       + \frac{\alpha_j - \alpha_i}{\alpha_k - \alpha_i} e^{(k)}
      \quad\text{ for } i < j < k.
    \end{equation}
\end{proposition}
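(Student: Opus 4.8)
The plan is to specialize the characterization of $X$-circuits for polyhedral cones recorded just before the statement: since $X = [0,\infty)$ is a polyhedral cone, the $X$-circuits lying in $N_\beta$ are exactly the edge generators of the polyhedral cone $\{\nu \in N_\beta : \sigma_X(-\cA\nu) \le 0\}$, so it suffices to describe these edge generators explicitly, ray by ray, for each choice of $\beta$.

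First I would evaluate the support function: $\sigma_{[0,\infty)}(y) = 0$ if $y \le 0$ and $\sigma_{[0,\infty)}(y) = +\infty$ otherwise, so $\sigma_X(-\cA\nu) \le 0$ is equivalent to $\cA\nu = \sum_{\ell=1}^m \alpha_\ell\nu_\ell \ge 0$. Fixing the negative entry to be at $\beta = \alpha_j$, the relevant cone is
\[
  C_j \ = \ \Bigl\{\nu \in \R^m \ :\ \nu_\ell \ge 0 \ (\ell \ne j),\ \textstyle\sum_{\ell=1}^m \nu_\ell = 0,\ \sum_{\ell=1}^m \alpha_\ell \nu_\ell \ge 0\Bigr\}.
\]
Any $\nu \in C_j$ with $\nu_j = 0$ has $\nu_{\setminus j} \ge \zerob$ summing to $0$, hence $\nu = \zerob$; so $C_j$ is pointed and every nonzero element has $\nu_j < 0$. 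Consequently the edge generators of $C_j$, normalized so that $\nu_j = -1$, correspond bijectively to the vertices of the polytope
\[
  P_j \ = \ C_j \cap \{\nu_j = -1\} \ = \ \Bigl\{\lambda \in \R_{\ge 0}^{\,m-1} \ :\ \textstyle\sum_{\ell \ne j}\lambda_\ell = 1,\ \sum_{\ell \ne j}\alpha_\ell \lambda_\ell \ge \alpha_j\Bigr\},
\]
the standard simplex on the coordinates $[m]\setminus\{j\}$ intersected with the halfspace $\{\sum_{\ell\ne j}\alpha_\ell\lambda_\ell \ge \alpha_j\}$.

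It then remains to enumerate the vertices of $P_j$, which I would do with the active-constraint criterion. A feasible $\lambda$ with $\supp\lambda = S \subseteq [m]\setminus\{j\}$ is a vertex iff the gradients of the constraints active at $\lambda$ span $\R^{m-1}$; these are $\lambda_\ell = 0$ for $\ell \notin S$ (contributing $m-1-|S|$ independent gradients), the equation $\sum_{\ell \ne j}\lambda_\ell = 1$ (one more, independent of the previous ones), and possibly $\sum_{\ell\ne j}\alpha_\ell\lambda_\ell = \alpha_j$. If the last constraint is inactive one needs $m - |S| \ge m - 1$, forcing $|S| = 1$, so $\lambda = e^{(k)}$, and inactivity means $\alpha_k > \alpha_j$, i.e.\ $k > j$. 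If it is active one needs $|S| \le 2$; $|S| = 1$ is impossible since it would force $\alpha_k = \alpha_j$, and $|S| = 2$, say $S = \{i,k\}$ with $i < k$, yields the unique solution of $\lambda_i + \lambda_k = 1$, $\alpha_i\lambda_i + \alpha_k\lambda_k = \alpha_j$, namely $\lambda_i = \tfrac{\alpha_k-\alpha_j}{\alpha_k-\alpha_i}$, $\lambda_k = \tfrac{\alpha_j-\alpha_i}{\alpha_k-\alpha_i}$, which is feasible (both entries positive) precisely when $\alpha_i < \alpha_j < \alpha_k$, i.e.\ $i < j < k$; the three active gradients are independent because $\alpha_i \ne \alpha_k$. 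Mapping each such $\lambda$ back to $\nu \in C_j$ by setting $\nu_j = -1$, $\nu_{\setminus j} = \lambda$, and taking the union over all $\beta = \alpha_j \in \cA$, gives exactly the two families $e^{(k)} - e^{(j)}$ for $j < k$ and $\tfrac{\alpha_k-\alpha_j}{\alpha_k-\alpha_i}e^{(i)} - e^{(j)} + \tfrac{\alpha_j-\alpha_i}{\alpha_k-\alpha_i}e^{(k)}$ for $i < j < k$.

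Since the polyhedral-cone reduction is already available, no step here is really deep; the points that need a little care are checking that $C_j$ is pointed and that the affine slice $\{\nu_j = -1\}$ meets every extreme ray (so that vertices of $P_j$ and normalized $X$-circuits genuinely correspond), and bookkeeping the rank count so that exactly the two stated shapes survive. An alternative to the rank count is the purely geometric observation that a hyperplane meeting the relative interior of a face of the simplex drops its dimension by one, so the vertices of $P_j$ are precisely the simplex vertices on the correct side together with the transversal intersection points of the simplex edges with the bounding hyperplane, which reproduces the same two families.
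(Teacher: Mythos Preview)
Your argument is correct. The paper itself does not prove this proposition; it is quoted from \cite{mnt-2020}, so there is no in-paper proof to compare against directly. Your route is the natural specialization of the polyhedral-cone reduction stated just before the proposition: you correctly translate $\sigma_{[0,\infty)}(-\cA\nu)\le 0$ into $\cA\nu\ge 0$, observe that $C_j$ is pointed so that the affine slice $\{\nu_j=-1\}$ meets every extreme ray, and then carry out a clean active-constraint vertex enumeration on a simplex cut by one halfspace. The bookkeeping (including the degenerate cases $j=1$ and $j=m$) checks out.

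For context, the paper does prove the companion result for $X=[-1,1]$ (Proposition~\ref{prop:circuits-univariate2}) and uses a different polyhedral picture there: instead of slicing the cone $\{\nu\in N_\beta:\sigma_X(-\cA\nu)\le 0\}$, it invokes Proposition~\ref{prop:polyhedron_x_finite_circuits} and computes the polyhedron $P=-\cA^T X + N_\beta^\circ$ by Fourier--Motzkin elimination of the auxiliary variables $\theta,\mu$, then checks irredundancy of the resulting facet inequalities. Your approach is arguably more direct for the conic case $X=[0,\infty)$, since the support function only takes the values $0$ and $\infty$ and one is left with a single linear inequality on the simplex; the normal-fan route of the paper becomes more convenient when $X$ is a polytope and $\sigma_X$ is genuinely piecewise linear.
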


Note that the $X$-circuits of the second form are exactly the $\R$-circuits
from~\eqref{eq:Rn_circuits}.

\begin{remark}\label{rem:X_circuits_as_facets}
By Proposition~\ref{prop:polyhedron_x_finite_circuits}, the
$X$-circuits of $\cA$ are the outer normal vectors to facets of polyhedra $P = -\cA^T X + N_{\beta}^\circ$ (for some $\beta$). 
As $N_{\beta}$ is pointed, $P$ is always full-dimensional.
\end{remark}

\begin{example}
If $X$ is a convex cone, then the second condition in the definition of $X$-circuits
simplifies, because the support function evaluates to $0$ whenever 
it is finite. Consider the conic sets $X^{(1)}=\R^2$ and $X^{(2)}=\R_+^2$
with respect to the support set $\cA=\{(0,0)^T,(0,4)^T,(4,0)^T,(1,1)^T\}$, 
as illustrated in Figure~\ref{fi:circuit2dim}.
Three points of $\cA$ are vertices of the convex hull of $\cA$,
and the point $(1,1)^T$ is contained in the relative interior of the convex hull of $\cA$. 
The set $\Lambda^{(1)}$ of $X^{(1)}$-circuits is 
$\R_+(2,1,1,-4)^T$, and the set $\Lambda^{(2)}$ of $X^{(2)}$-circuits
is
\[
  \Lambda^{(2)} = \Lambda^{(1)} \cup 
  \R_+(0,3,1,-4)^T \cup \R_+(0,1,3,-4)^T.
\]
Similar to the arguments for the one-dimensional-cases
in Example~\ref{ex:3-point-support},
the $X^{(2)}$-circuit $(0,1,3,-4)^T$ is not an $X^{(1)}$-circuit,
as the resulting support function with respect to $X^{(1)}$
is not finite anymore: 
$\sigma_{\R^2}(-\cA(0,1,3,-4))=\sup_{x_1\in\R}-8x_1=\infty$.
\end{example}

\begin{figure}[ht]
        \begin{minipage}{8cm}
	\begin{tikzpicture}
	\useasboundingbox (-4.5,-4.5) rectangle (5,4);
	\scope[transform canvas={scale=.7}]
	\coordinate (Origin)   at (0,0);
	\coordinate (xaxismin) at (-4.5,0);
	\coordinate (xaxismax) at (4.5,0);
	\coordinate (yaxismin) at (0,-4.5);
	\coordinate (yaxismax) at (0,4.5);
	\draw [ultra thick, black,-latex] (xaxismin) -- (xaxismax); 
	\draw [ultra thick, black,-latex] (yaxismin) -- (yaxismax);
	
	\draw [purple] (3.5,0.5) -- (0.5,3.5) ;
	\draw [purple] (3.5,1.5) -- (1.5,3.5) ;
	\draw [purple] (3.5,2.5) -- (2.5,3.5) ;
	\draw [purple] (3.5,-.5) -- (-.5,3.5) ;
	\draw [purple] (3.5,-1.5) -- (-1.5,3.5) ;
	\draw [purple] (3.5,-2.5) -- (-2.5,3.5) ;
	\draw [purple] (3.5,-3.5) -- (-3.5,3.5) ;
	\draw [purple] (2.5,-3.5) -- (-3.5,2.5) ;
	\draw [purple] (1.5,-3.5) -- (-3.5,1.5) ;
	\draw [purple] (.5,-3.5) -- (-3.5,0.5) ;
	\draw [purple] (-.5,-3.5) -- (-3.5,-.5) ;
	\draw [purple] (-1.5,-3.5) -- (-3.5,-1.5) ;
	\draw [purple] (-2.5,-3.5) -- (-3.5,-2.5) ;
	
	\draw [cyan] (0.25,0.25) -- (3.75,3.75) ;
	\draw [cyan] (1.25,0.25) -- (3.75,2.75) ;
	\draw [cyan] (2.25,0.25) -- (3.75,1.75) ;
	\draw [cyan] (3.25,0.25) -- (3.75,0.75) ;
	\draw [cyan] (0.25,1.25) -- (2.75,3.75) ;
	\draw [cyan] (0.25,2.25) -- (1.75,3.75) ;
        \draw [cyan] (3.25,0.25) -- (3.75,0.75) ;
        \draw [cyan] (0.25,3.25) -- (0.75,3.75) ;

	\draw [] node[below] at (1,0) {$1$}; 
	\draw [] node[left] at (0,1) {$1$}; 
	
	\endscope
	\end{tikzpicture}
	\end{minipage}
	\begin{minipage}{8cm}
	\begin{tikzpicture}
	\useasboundingbox (-4.5,-4.5) rectangle (5,4);
	\scope[transform canvas={scale=.7}]
	\coordinate (Origin)   at (0,0);
	\coordinate (xaxismin) at (-4.5,0);
	\coordinate (xaxismax) at (4.5,0);
	\coordinate (yaxismin) at (0,-4.5);
	\coordinate (yaxismax) at (0,4.5);
	\draw [ultra thick, black,-latex] (xaxismin) -- (xaxismax); 
	\draw [ultra thick, black,-latex] (yaxismin) -- (yaxismax);
	
	\draw [gray] (-4.5,1) -- (4.5,1);
	\draw [gray] (-4.5,2) -- (4.5,2);
	\draw [gray] (-4.5,3) -- (4.5,3);
	\draw [gray] (-4.5,4) -- (4.5,4);
	\draw [gray] (-4.5,-1) -- (4.5,-1);
	\draw [gray] (-4.5,-2) -- (4.5,-2);
	\draw [gray] (-4.5,-3) -- (4.5,-3);
	\draw [gray] (-4.5,-4) -- (4.5,-4);
	
	\draw [gray] (1,-4.5) -- (1,4.5);
	\draw [gray] (2,-4.5) -- (2,4.5);
	\draw [gray] (3,-4.5) -- (3,4.5);
	\draw [gray] (4,-4.5) -- (4,4.5);
	\draw [gray] (-1,-4.5) -- (-1,4.5);
	\draw [gray] (-2,-4.5) -- (-2,4.5);
	\draw [gray] (-3,-4.5) -- (-3,4.5);
	\draw [gray] (-4,-4.5) -- (-4,4.5);
		
	\filldraw [red] (0,0) circle (4pt);
	\filldraw [red] (0,4) circle (4pt);
	\filldraw [red] (4,0) circle (4pt);
	\filldraw [red] (1,1) circle (4pt);
	
	\draw [] node[below] at (1,0) {$1$}; 
	\draw [] node[left] at (0,1) {$1$}; 
	
	\endscope
	\end{tikzpicture}
	\end{minipage}
	\vspace*{-6ex}
	
	\caption{The sets \textcolor{purple}{$X^{(1)}=\R^2$} and \textcolor{cyan}{$X^{(2)}=\R_+^2$} and the support set \textcolor{red}{$\cA=\{(0,0)^T,(0,4)^T,(4,0)^T,(1,1)^T\}$}}
	\label{fi:circuit2dim}
\end{figure}
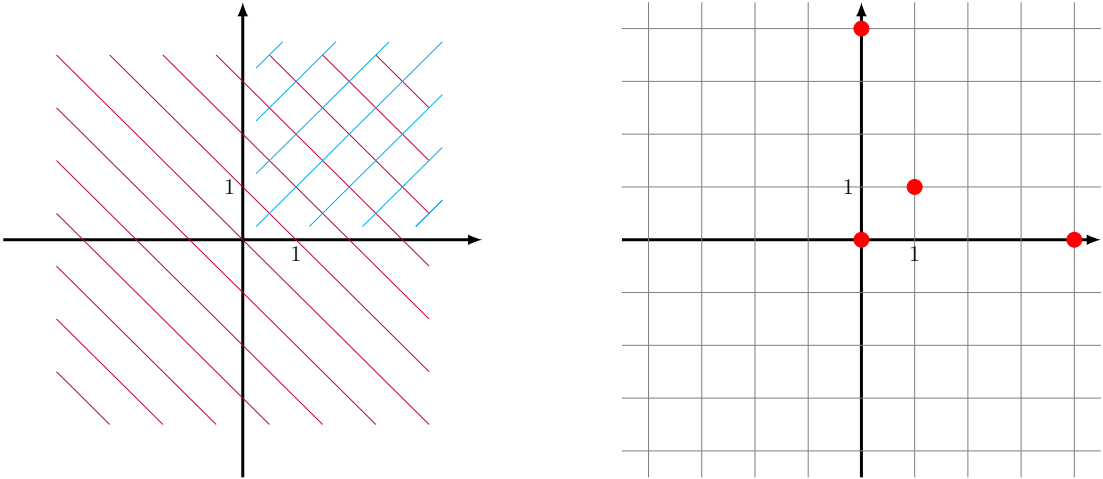

\subsection{Non-negativity of signomials}

We consider the cone $C_X(\cA)$ of $X$-SAGE signomials supported on $\cA$
\cite{mcw-2019}, 
which was informally introduced in the Introduction.
For $\beta \in \cA$, set
\begin{equation}\label{eq:def_c_age}
    C_X(\cA,\beta) = \left\{ f \,:\, f =\sum_{\alpha\in\cA}c_{\alpha}\exp(\alpha^T x) \text{ is non-negative on $X$},~ c_{\setminus \beta} \geq \zerob \right\},
\end{equation}
called the \emph{$X$-AGE cone supported on $\cA$ with respect to $\beta$.}
By \cite{mcw-2019},
$C_X(\cA)$ decomposes as $C_X(\cA) = \sum_{\beta \in \cA} C_X(\cA,\beta)$.

Given a vector $\lambda \in N_{\beta}$ with $\lambda_{\beta} = -1$, the 
\emph{$\lambda$-witnessed AGE cone} $C_X(\cA,\lambda)$ is defined as
\begin{equation}\label{eq:weighted_prim_x_age_powercone}
    C_X(\cA,\lambda) = \left\{~ \sum_{\alpha\in\cA}c_{\alpha}\exp(\alpha^T x) \,:\, \prod_{\alpha \in \lambda^+} \left(\frac{c_\alpha}{\lambda_\alpha} \right)^{\lambda_\alpha} \geq - c_\beta \exp\left(\sigma_X(-\cA \lambda)\right) ,~ c_{\setminus \beta} \geq \zerob \right\}.
\end{equation}
All signomials in $C_X(\cA,\lambda)$ are non-negative over $X$. Moreover, for polyhedral 
$X$, the conditional SAGE cone can be naturally decomposed into the Minkowski sum of a finite 
set of $\lambda$-witnessed cones, where $\lambda$ runs over
the normalized $X$-circuits.

\begin{proposition}\cite{mnt-2020}
  \label{prop:decomp1}
    Let $X \subset \R^n$ be a polyhedron and $\Lambda_X(\cA)$ be nonempty.
    Then the conditional SAGE cone $C_X(\cA)$ decomposes as the finite Minkowski sum
    \begin{equation}
        C_X(\cA) = \sum_{\lambda \in \Lambda_X(\cA)}  C_X(\cA,\lambda) .
    \end{equation}
\end{proposition}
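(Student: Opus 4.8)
The plan is to prove the two inclusions of the claimed identity separately, the inclusion ``$\supseteq$'' being routine and the inclusion ``$\subseteq$'' carrying the content. For ``$\supseteq$'', fix $\lambda\in\Lambda_X(\cA)$ with negative entry at some $\beta\in\cA$. By the weighted arithmetic--geometric mean inequality applied to the power-cone constraint in \eqref{eq:weighted_prim_x_age_powercone}, together with the definition of the support function, every signomial in $C_X(\cA,\lambda)$ is non-negative on $X$ and has at most the single negative coefficient indexed by $\beta$; hence $C_X(\cA,\lambda)\subseteq C_X(\cA,\beta)\subseteq C_X(\cA)$. Since $C_X(\cA)$ is a convex cone, it is closed under Minkowski sums of its subsets, so $\sum_{\lambda\in\Lambda_X(\cA)}C_X(\cA,\lambda)\subseteq C_X(\cA)$; this sum is finite by Proposition~\ref{prop:polyhedron_x_finite_circuits}.

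For ``$\subseteq$'' I would use the decomposition $C_X(\cA)=\sum_{\beta\in\cA}C_X(\cA,\beta)$ of \cite{mcw-2019} to reduce to proving $C_X(\cA,\beta)\subseteq\sum_{\lambda\in\Lambda_X(\cA)}C_X(\cA,\lambda)$ for each fixed $\beta$. Let $f=\sum_{\alpha\in\cA}c_\alpha\exp(\alpha^Tx)\in C_X(\cA,\beta)$, so $c_{\setminus\beta}\geq\zerob$. If $c_\beta\geq 0$, then $c\geq\zerob$ and $f$ lies in every witnessed cone $C_X(\cA,\lambda)$, so the hypothesis $\Lambda_X(\cA)\neq\emptyset$ settles this case. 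If $c_\beta<0$, I would invoke the representation of the $X$-AGE cone as the union of the witnessed cones $C_X(\cA,\nu)$ over all normalized $\nu\in N_\beta$ with $\nu^+\subseteq\supp c_{\setminus\beta}$ \cite{mcw-2019} (for polyhedral $X$ the underlying supremum is attained, since one maximizes an upper semicontinuous function over the compact set $\{\nu\in N_\beta:\nu_\beta=-1,\ \supp\nu_{\setminus\beta}\subseteq\supp c_{\setminus\beta}\}$). This yields a normalized $\nu\in N_\beta$, with $\nu^+:=\supp\nu_{\setminus\beta}\subseteq\supp c_{\setminus\beta}$, such that
\[
 -c_\beta\ \le\ \exp\!\big(-\sigma_X(-\cA\nu)\big)\prod_{\alpha\in\nu^+}\Big(\frac{c_\alpha}{\nu_\alpha}\Big)^{\nu_\alpha},\qquad\text{in particular }\ \sigma_X(-\cA\nu)<\infty .
\]

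The heart of the argument is to decompose this witness $\nu$ into $X$-circuits without losing linearity of the support function. Since $\sigma_X(-\cA\nu)<\infty$ and the support function of $N_\beta^\circ$ vanishes on $N_\beta$ (which contains $\nu$), the vector $\nu$ lies in the support of the outer normal fan $\mathcal{O}(P)$ of $P=-\cA^T X+N_\beta^\circ$, hence in some cone $C$ of $\mathcal{O}(P)$. Because $\mathcal{O}(P)$ is supported inside the pointed cone $N_\beta$, the cone $C$ is pointed, so $\nu=\sum_{i=1}^{r}t_i\lambda_i$ with $t_i>0$ and $\lambda_1,\dots,\lambda_r$ among the extreme rays of $C$; each $\lambda_i$ spans a ray of $\mathcal{O}(P)$ and is therefore an $X$-circuit by Proposition~\ref{prop:polyhedron_x_finite_circuits}, normalized so that $\lambda_{i,\beta}=-1$ (every nonzero element of $N_\beta$ has negative $\beta$-coordinate), which then forces $\sum_i t_i=1$. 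Moreover $\sigma_P$ agrees with $\sigma_X(-\cA\,\cdot\,)$ on $N_\beta$ and is linear on $C$, whence $\sigma_X(-\cA\nu)=\sum_i t_i\,\sigma_X(-\cA\lambda_i)$ and $\nu_\alpha=\sum_i t_i\lambda_{i,\alpha}$ for all $\alpha$; note also $\lambda_i^+\subseteq\nu^+\subseteq\supp c_{\setminus\beta}$.

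It remains to split $f$ along the $\lambda_i$. For $\alpha\in\nu^+$ I would set $c^{(i)}_\alpha=\frac{t_i\lambda_{i,\alpha}}{\nu_\alpha}\,c_\alpha\geq 0$, so that $\sum_i c^{(i)}_\alpha=c_\alpha$ and $c^{(i)}_\alpha=0$ whenever $\alpha\notin\lambda_i^+$, and leave all remaining entries of $c^{(i)}$ zero; using $\sum_{\alpha\in\lambda_i^+}\lambda_{i,\alpha}=1$ one checks that $R_i:=\exp(-\sigma_X(-\cA\lambda_i))\prod_{\alpha\in\lambda_i^+}(c^{(i)}_\alpha/\lambda_{i,\alpha})^{\lambda_{i,\alpha}}$ equals $t_i\exp(-\sigma_X(-\cA\lambda_i))\prod_{\alpha\in\lambda_i^+}(c_\alpha/\nu_\alpha)^{\lambda_{i,\alpha}}$. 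Summing over $i$, applying the weighted arithmetic--geometric mean inequality with weights $t_i$, and then using the displayed bound gives
\[
 \sum_{i=1}^{r}R_i\ \geq\ \prod_{i=1}^{r}\big(R_i/t_i\big)^{t_i}\ =\ \exp\!\big(-\sigma_X(-\cA\nu)\big)\prod_{\alpha\in\nu^+}\Big(\frac{c_\alpha}{\nu_\alpha}\Big)^{\nu_\alpha}\ \geq\ -c_\beta\ \geq\ 0 .
\]
Hence one can pick $s_i\in[0,R_i]$ with $\sum_i s_i=-c_\beta$ and put $c^{(i)}_\beta=-s_i$; the inequality $-c^{(i)}_\beta=s_i\leq R_i$ is exactly the power-cone constraint, so $c^{(i)}\in C_X(\cA,\lambda_i)$. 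Finally the remainder $c-\sum_i c^{(i)}$ is non-negative --- it vanishes in the coordinate $\beta$ and in all coordinates of $\nu^+$, and equals $c_\alpha\geq 0$ elsewhere --- hence lies in $C_X(\cA,\lambda_1)$, and we conclude $f\in\sum_{\lambda\in\Lambda_X(\cA)}C_X(\cA,\lambda)$; finiteness of the sum is once more Proposition~\ref{prop:polyhedron_x_finite_circuits}. The step I expect to be the main obstacle is this last one, namely converting the single aggregate inequality witnessing $f\in C_X(\cA,\beta)$ into the family of per-circuit inequalities required for membership in the $C_X(\cA,\lambda_i)$: this relies on the linearity of $\sigma_X(-\cA\,\cdot\,)$ on one cone of $\mathcal{O}(P)$ (so that $\exp(-\sigma_X(-\cA\nu))$ factors as $\prod_i\exp(-t_i\sigma_X(-\cA\lambda_i))$), on weighted AM--GM (turning a geometric-mean lower bound into a sum of ``capacities'' $R_i$), and on careful bookkeeping of which coefficient of $f$ is routed through which circuit.
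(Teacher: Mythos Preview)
The paper does not supply its own proof of this proposition: it is quoted verbatim as a preliminary result from \cite{mnt-2020}, so there is nothing in the present paper to compare your argument against line by line.

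That said, your proof is correct and follows what is essentially the natural (and, as far as I can tell, the original) route. The inclusion ``$\supseteq$'' is immediate from the definitions. For ``$\subseteq$'' you reduce to a single $C_X(\cA,\beta)$, pick a relative-entropy witness $\nu\in N_\beta$ for $f$, and then exploit Proposition~\ref{prop:polyhedron_x_finite_circuits} in the right way: since $\nu$ lies in a cone $C$ of the normal fan $\mathcal{O}(-\cA^TX+N_\beta^\circ)$, it decomposes as a convex combination of normalized $X$-circuits $\lambda_i$ generating the extreme rays of $C$, and the support function is \emph{linear} on $C$, so $\sigma_X(-\cA\nu)=\sum_i t_i\,\sigma_X(-\cA\lambda_i)$. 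Your splitting $c^{(i)}_\alpha=\frac{t_i\lambda_{i,\alpha}}{\nu_\alpha}c_\alpha$ together with weighted AM--GM is exactly the device that turns the single aggregate power-cone inequality into per-circuit inequalities, and the bookkeeping (supports $\lambda_i^+\subseteq\nu^+\subseteq\supp c_{\setminus\beta}$, the non-negative remainder absorbed into any $C_X(\cA,\lambda_1)$, the case $c_\beta\ge 0$ handled via the hypothesis $\Lambda_X(\cA)\neq\emptyset$) is all in order. The only places where a reader might want one more word are (i) that $\sigma_P=\sigma_X(-\cA\,\cdot\,)$ on $N_\beta$ because $\sigma_{N_\beta^\circ}$ vanishes there, and (ii) that the rays of $C$ are rays of the fan because faces of cones in a polyhedral fan are again cones of the fan---both of which you implicitly use and which are standard.
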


\subsection{Reduced circuits and non-negativity of signomials\label{se:prelim-reduced}}

In general, the representation in Proposition~\ref{prop:decomp1} can include redundancies. Using a reducibility concept of circuits, which takes into account
the value $\sigma_X(-\cA \nu)$ of an $X$-circuit $\nu$, an irredundant representation
can be given.
The \emph{extended form} of an $X$-circuit $\nu \in \R^{\cA}$ is defined as
$(\nu,\sigma_X(-\cA \nu)) \in \R^{\mathcal{A}} \times \R$. The set
\[
  G_X(\mathcal{A}) \ = \ \cone (\{ (\nu,\sigma_X(-\cA\nu)) \ : \ \lambda \in 
    \Lambda_X(\cA) \} \cup \{(\mathbf{0}, 1)\})
\]
is called the \emph{circuit graph} of $(\mathcal{A},X)$.
Whenever we consider the circuit graph or the reduced sublinear circuits defined subsequently, 
we will tacitly assume that the functions $x \mapsto \exp(\alpha^T x)$, 
$\alpha \in \cA$, are linearly independent on $X$. Then the set $G_X(\mathcal{A})$ is 
pointed and closed (see \cite{mnt-2020}). 

\begin{definition}\label{def:reduced_circuits}
  An $X$-circuit $\nu$ is called \emph{reduced} if its extended form
  generates an extreme ray of $G_X(\cA)$.
  Denote by $\Lambda_X^\star(\cA)$ the set of normalized reduced $X$-circuits.
\end{definition}

For polyhedral $X$, the conditional SAGE cone $C_X(\cA)$
can be decomposed into the Minkowski sum
of a finite set of $\lambda$-witnessed cones, where $\lambda$ runs over
the reduced $X$-circuits.

\begin{proposition}\cite{mnt-2020}
  \label{prop:decomp2}
    Let $X \subset \R^n$ be a polyhedron and $\Lambda_X(\cA)$ be non-empty.
    Then the conditional SAGE cone $C_X(\cA)$ decomposes as the finite Minkowski sum
    \begin{equation}\label{eq:decomp2}
        C_X(\cA) = \sum_{\lambda \in \Lambda^\star_X(\cA)}  C_X(\cA,\lambda)  .
    \end{equation}
    Moreover, there does not exist a proper subset $\Lambda \subsetneq \Lambda_X^\star(\cA)$ with 
    $$C_X(\cA) = \sum_{\lambda \in \Lambda} C_X(\cA,\lambda).$$
\end{proposition}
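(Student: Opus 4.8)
The plan is to bootstrap from the unrestricted decomposition of Proposition~\ref{prop:decomp1} and then to prune the index set down to the reduced circuits using the extreme-ray structure of the circuit graph $G_X(\cA)$. Under the standing hypothesis that the functions $x\mapsto\exp(\alpha^Tx)$, $\alpha\in\cA$, are linearly independent on $X$, the cone $G_X(\cA)$ is pointed and closed, and by Proposition~\ref{prop:polyhedron_x_finite_circuits} it is finitely generated, hence polyhedral; in particular it is the conical hull of its finitely many extreme rays, so $\Lambda^\star_X(\cA)$ is finite and equals (together with possibly $\cone\{(\zerob,1)\}$) the set of extreme rays of $G_X(\cA)$.

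\textbf{The decomposition \eqref{eq:decomp2}.} The bridge between the combinatorics of $G_X(\cA)$ and the witnessed cones is the following monotonicity principle: if $\lambda,\lambda^{(1)},\dots,\lambda^{(r)}\in\Lambda_X(\cA)$ and there are scalars $t_0,t_1,\dots,t_r\ge 0$ with
\[
  \big(\lambda,\sigma_X(-\cA\lambda)\big)\ =\ \sum_{i=1}^{r} t_i\big(\lambda^{(i)},\sigma_X(-\cA\lambda^{(i)})\big)\ +\ t_0(\zerob,1),
\]
then $C_X(\cA,\lambda)\subseteq\sum_{i=1}^{r}C_X(\cA,\lambda^{(i)})$. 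I would establish this by taking $f=\sum_{\alpha}c_\alpha\exp(\alpha^Tx)\in C_X(\cA,\lambda)$, splitting the coefficient vector $c$ coordinatewise along the weights $t_i\lambda^{(i)}$ dictated by the first-coordinate identity $\lambda=\sum_i t_i\lambda^{(i)}$ (distributing the single negative coefficient $c_\beta$ among those $\lambda^{(i)}$ whose negative entry sits in coordinate $\beta$), and then checking that each resulting piece lies in $C_X(\cA,\lambda^{(i)})$: the identity $\sigma_X(-\cA\lambda)=\sum_i t_i\sigma_X(-\cA\lambda^{(i)})+t_0$ in the last coordinate is exactly what lets the power-cone inequality \eqref{eq:weighted_prim_x_age_powercone} for $\lambda$ pass to those for the $\lambda^{(i)}$ (the product terms being controlled by the weighted arithmetic-geometric mean inequality, and the slack $t_0\ge 0$ only relaxing the individual inequalities). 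Granting this, Proposition~\ref{prop:decomp1} gives $C_X(\cA)=\sum_{\lambda\in\Lambda_X(\cA)}C_X(\cA,\lambda)$, and since every generator $(\lambda,\sigma_X(-\cA\lambda))$ of $G_X(\cA)$ lies in the conical hull of its extreme rays, i.e.\ of $\{(\mu,\sigma_X(-\cA\mu)):\mu\in\Lambda^\star_X(\cA)\}\cup\{(\zerob,1)\}$, the monotonicity principle yields $C_X(\cA,\lambda)\subseteq\sum_{\mu\in\Lambda^\star_X(\cA)}C_X(\cA,\mu)$ for each $\lambda$, whence $C_X(\cA)\subseteq\sum_{\mu\in\Lambda^\star_X(\cA)}C_X(\cA,\mu)\subseteq C_X(\cA)$.

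\textbf{Irredundancy.} I would argue by contraposition, in the spirit of the unconstrained case \cite{knt-2020}. Fix $\mu\in\Lambda^\star_X(\cA)$ and $\Lambda\subseteq\Lambda^\star_X(\cA)$ with $\mu\notin\Lambda$, and suppose $C_X(\cA,\mu)\subseteq\sum_{\lambda\in\Lambda}C_X(\cA,\lambda)$. The crucial ingredient is a converse to the monotonicity principle: testing this containment on an extremal signomial of $C_X(\cA,\mu)$ --- one with support exactly $\supp\mu$ for which \eqref{eq:weighted_prim_x_age_powercone} holds with equality --- forces
\[
  \big(\mu,\sigma_X(-\cA\mu)\big)\ \in\ \cone\big(\{(\lambda,\sigma_X(-\cA\lambda)):\lambda\in\Lambda\}\cup\{(\zerob,1)\}\big).
\]
Since $G_X(\cA)$ is pointed, each extended form on the right spans an extreme ray distinct from $\cone\{(\mu,\sigma_X(-\cA\mu))\}$ (note that $(\zerob,1)$ cannot lie on that ray because $\mu\neq\zerob$, and distinct normalized reduced circuits span distinct rays), so the face property of the extreme ray through $(\mu,\sigma_X(-\cA\mu))$ forces every coefficient in any such representation to vanish, contradicting $(\mu,\sigma_X(-\cA\mu))\neq\zerob$. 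Hence $C_X(\cA,\mu)\not\subseteq\sum_{\lambda\in\Lambda}C_X(\cA,\lambda)$, and consequently no proper subset $\Lambda\subsetneq\Lambda^\star_X(\cA)$ can reproduce $C_X(\cA)$.

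The step I expect to be hardest is this converse monotonicity used for irredundancy: showing that a set-theoretic containment of witnessed AGE cones is reflected faithfully in the geometry of $G_X(\cA)$. This needs an explicit signomial generating an extreme ray of $C_X(\cA,\mu)$ together with a Legendre/biconjugate-type duality between the power-cone description \eqref{eq:weighted_prim_x_age_powercone} and the extended form $(\mu,\sigma_X(-\cA\mu))$, so that re-expanding this signomial through any candidate sub-decomposition reconstructs a conical representation of $(\mu,\sigma_X(-\cA\mu))$ over the extended forms used. The delicate bookkeeping points, in both the monotonicity principle and its converse, are the distribution of the negative coefficient among circuits whose negative entry sits in different coordinates, and the role of the closedness of $G_X(\cA)$ when the last-coordinate identity is only attained in a limit --- precisely the places where the linear-independence hypothesis enters.
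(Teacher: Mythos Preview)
The paper does not give its own proof of this proposition: it is quoted verbatim from \cite{mnt-2020} and carries that citation in its heading, with no argument supplied here. So there is nothing in the present paper to compare your proposal against.

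That said, your outline matches the architecture one would expect from the surrounding material (the circuit graph $G_X(\cA)$, its polyhedrality for polyhedral $X$ via Proposition~\ref{prop:polyhedron_x_finite_circuits}, and the definition of reduced circuits as extreme rays). The two substantive claims you isolate --- the ``monotonicity principle'' that a conic identity among extended forms entails the corresponding containment of $\lambda$-witnessed cones, and its converse for the irredundancy direction --- are indeed the crux, and you correctly flag the delicate points (distributing the negative coefficient across summands with possibly different $\beta$'s, and the role of the linear-independence hypothesis in keeping $G_X(\cA)$ pointed). Since the actual proof lives in \cite{mnt-2020}, a full comparison would require consulting that source; within the present paper your sketch is consistent with how the result is used.
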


In the univariate case with $\cA = \{\alpha_1, \ldots, \alpha_m\}$ sorted ascendingly,
we have
\begin{eqnarray*}
    \Lambda^\star_{\R}(\cA)
    & = & \left\{
       \left(\frac{\alpha_{i+1}-\alpha_i}{\alpha_{i+1} - \alpha_{i-1}}\right)e^{(i-1)} 
      - e^{(i)} 
       + \left(\frac{\alpha_i - \alpha_{i-1}}{\alpha_{i+1} - \alpha_{i-1}}\right)e^{(i+1)}
   \ : \  2 \le i \le m-1 \right\} \\
    \text{ and } \Lambda^\star_{[0,\infty)}(\cA)
    & = & \Lambda^\star_{\R}(\cA) \cup \{e^{(2)} - e^{(1)}\}.
\end{eqnarray*}
See \cite{FdW-2019} for $\Lambda^\star_{\R}(\cA)$ and
\cite{mnt-2020} for $\Lambda^\star_{[0,\infty)}(\cA)$.
  
\section{\texorpdfstring{$X$}{X}-circuits and their supports\label{se:supports}}

In this section, we study the relationship between $X$-circuits and their supports. 
We begin with a study of the compact univariate case $[-1,1]$. This complements
the known cases $\R$ from the Introduction and $[0,\infty)$ from Proposition~\ref{prop:univariate1}.

\begin{proposition}
	\label{prop:circuits-univariate2}
	Let $X = [-1,1]$ and
	$\mathcal{A} = \{\alpha_1, \ldots, \alpha_m \} \subset \R$
	with $\alpha_1 < \cdots < \alpha_m$. An element $\lambda \in\bigcup_{\beta\in\cA}N_\beta$ is a normalized $X$-circuit if and only if it is of the following form: 
	\begin{enumerate}
		\item $\lambda = e^{(j)} - e^{(i)}$ for $i \ne j$, or \smallskip

		\item $\displaystyle{\lambda = 
		\frac{\alpha_k-\alpha_j}{\alpha_k-\alpha_i}
		e^{(i)} -e^{(j)} + 
		\frac{\alpha_j-\alpha_i}{\alpha_k-\alpha_i}
		e^{(k)}}$
	    for $i <j < k $.
	\end{enumerate}
\end{proposition}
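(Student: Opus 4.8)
The plan is to exploit that $X=[-1,1]$ is compact, so that $\sigma_X(-\cA\nu)=|\cA\nu|$ is finite for every $\nu$ (a scalar here, as $\cA\subset\R$). Thus the first defining property of an $X$-circuit holds automatically, and the second property is governed entirely by the piecewise-linear map $g\colon\nu\mapsto|\cA\nu|$ on each cone $N_\beta$. I would set out three facts first. \emph{Fact (i)}: $N_\beta$ is a pointed simplicial cone, and its $m-1$ extreme rays are spanned by the vectors $e^{(j)}-e^{(i)}$, where $i\in[m]$ is the index with $\alpha_i=\beta$ and $j$ ranges over $[m]\setminus\{i\}$ (inside $\{\oneb^T\nu=0\}$ the cone is cut out by the independent sign constraints $\nu_\alpha\ge 0$, $\alpha\ne\beta$). \emph{Fact (ii)}: for a pointed cone $P$ and $\nu\in P\setminus\{\zerob\}$, the point $\nu$ lies in the relative interior of some two-dimensional subcone of $P$ if and only if $\cone\{\nu\}$ is \emph{not} an extreme ray of $P$; this turns the second defining property of an $X$-circuit into a statement about faces. \emph{Fact (iii)}: a two-dimensional cone $C\subseteq N_\beta$ has $g|_C$ linear if and only if $\cA\nu$ has constant sign on $C$; for if $\cA\nu$ changes sign on $C$, the hyperplane $\{\cA\nu=0\}$ cuts $C$ into two full-dimensional pieces on which $g$ equals $\cA\nu$ and $-\cA\nu$, and no single linear function can agree with both on $\spann C$ unless $\cA$ vanishes there.

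For \emph{sufficiency}, there are two cases. If $\lambda=e^{(j)}-e^{(i)}$ (so $\lambda\in N_{\alpha_i}$, normalized), then $\lambda$ spans an extreme ray of $N_{\alpha_i}$, so by Fact~(ii) it never lies in the relative interior of a two-dimensional subcone of $N_{\alpha_i}$ and the second defining property holds vacuously; hence $\lambda$ is an $X$-circuit. If $\lambda$ has the form in item~(2), then it is one of the normalized $\R$-circuits of~\eqref{eq:Rn_circuits}, so $\cA\lambda=0$ and $g(\lambda)=0$; unwinding the definition of an $X$-circuit for $X=\R$ via Fact~(ii) --- noting that $\sigma_\R(-\cA\nu)$ is linear on a two-dimensional cone only when that cone lies in $\{\cA\nu=0\}$ --- shows that $\cone\{\lambda\}$ is an extreme ray of $N_\beta\cap\{\cA\nu=0\}$. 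If $\lambda$ were in the relative interior of a two-dimensional $C\subseteq N_\beta$ with $g|_C$ linear, then by Fact~(iii) $C$ lies, say, in $\{\cA\nu\ge 0\}$; but then the linear functional $\nu\mapsto\cA\nu$ is nonnegative on the cone $C$ and vanishes at the relative-interior point $\lambda$, so it vanishes on all of $C$, giving $C\subseteq N_\beta\cap\{\cA\nu=0\}$ --- contradicting that $\lambda$ spans an extreme ray there. Hence $\lambda$ is an $X$-circuit.

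For \emph{necessity}, let $\nu^\ast\in N_\beta$ be a normalized $X$-circuit, and split on the sign of $\cA\nu^\ast$. If $\cA\nu^\ast\ne 0$, let $F$ be the unique face of $N_\beta$ with $\nu^\ast\in\relint F$; if $\dim F\ge 2$, intersecting $F$ with a two-dimensional subspace of $\spann F$ containing $\nu^\ast$ and then with $\{\cA\nu\ge 0\}$ (assuming $\cA\nu^\ast>0$) produces a two-dimensional cone $C\subseteq N_\beta$ with $\nu^\ast\in\relint C$ on which $\cA\nu$ has constant sign, so $g|_C$ is linear by Fact~(iii) --- contradicting that $\nu^\ast$ is an $X$-circuit. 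Therefore $\dim F=1$, i.e. $\nu^\ast=e^{(j)}-e^{(i)}$ with $\alpha_i=\beta$, which is item~(1). If $\cA\nu^\ast=0$, then $g\equiv 0$ on $N_\beta\cap\{\cA\nu=0\}$, so the second defining property forces $\nu^\ast$ off the relative interior of every two-dimensional subcone of $N_\beta\cap\{\cA\nu=0\}$; by Fact~(ii), $\cone\{\nu^\ast\}$ is an extreme ray of $N_\beta\cap\{\cA\nu=0\}$, whence $\nu^\ast$ is a normalized $\R$-circuit and so has the form in item~(2) by~\eqref{eq:Rn_circuits}.

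The main obstacle I expect is bookkeeping rather than depth: handling the relative-interior and pointedness technicalities in Facts~(i)--(ii) and in the face-slicing step of the necessity part, and citing cleanly the $X=\R$ description of circuits as the extreme rays of $N_\beta\cap\{\cA\nu=0\}$. The one genuinely load-bearing step is Fact~(iii) together with the elementary observation that a linear functional which is nonnegative on a convex cone and vanishes at a relative-interior point must vanish on the whole cone --- this is exactly what collapses any candidate linearity cone of $g$ into $\{\cA\nu=0\}$ and thereby couples the $[-1,1]$-circuits to the $\R$-circuits.
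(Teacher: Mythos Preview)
Your argument is correct and takes a genuinely different route from the paper. The paper invokes Proposition~\ref{prop:polyhedron_x_finite_circuits} and explicitly computes the polyhedron $P=-\cA^T X + N_\beta^\circ$: it parametrizes $P$ via $\theta\in[-1,1]$ and $\mu\in\R$, eliminates these parameters to obtain an irredundant inequality description of $P$, and then reads off the $X$-circuits as the facet normals. You instead work directly from the definition, exploiting that $\sigma_{[-1,1]}(-\cA\nu)=|\cA\nu|$, and reduce everything to a sign analysis of the single linear functional $\nu\mapsto\cA\nu$ on $N_\beta$. Your Fact~(iii) together with the observation that a nonnegative linear functional vanishing at a relative-interior point vanishes identically is precisely what forces any two-dimensional linearity region through a point with $\cA\nu=0$ to lie in $\{\cA\nu=0\}$, and this cleanly splits the problem into the extreme rays of $N_\beta$ (type~(1)) and the extreme rays of $N_\beta\cap\{\cA\nu=0\}$ (type~(2), the $\R$-circuits of~\eqref{eq:Rn_circuits}).

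Your approach is more elementary and more transparent about \emph{why} the $[-1,1]$-circuits are exactly the $\R$-circuits together with all two-term vectors: the absolute value has only the hyperplane $\{\cA\nu=0\}$ as its non-smooth locus, so the maximal linearity regions of $g$ are the two half-cones $N_\beta\cap\{\pm\cA\nu\ge 0\}$, and the rays of the common refinement with the face fan of $N_\beta$ are precisely what you list. The paper's approach, by contrast, is less conceptual here but is the template that works uniformly for \emph{any} polyhedral $X$: one simply plugs a different $X$ into $-\cA^T X + N_\beta^\circ$ and computes facets, whereas your sign-based argument is tailored to the special form $\sigma_X(y)=|y|$.
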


	\begin{proof}
	Fix $j\in [n]$ and write $N_j := N_{(\alpha_j)}$ for short.
	By Proposition~\ref{prop:polyhedron_x_finite_circuits},
	the $X$-circuits are the vectors spanning the rays in the outer normal cone of
	the polyhedron
	\begin{align*}
	P \ & = \  -\cA^TX+N_{j}^\circ\\
	\ & = \ \conv\{(\alpha_1, \ldots, \alpha_m)^T,
	-(\alpha_1, \ldots, \alpha_m)^T \}
	+\R\cdot \mathbf{1}-\sum\limits_{i\ne j}\pos e^{(i)}\\
	\ & = \ \{\theta (\alpha_1, \ldots, \alpha_m)^T+\mu\mathbf{1}: -1\le \theta\le 1, \mu\in\R\}-\sum\limits_{i\ne j}\pos e^{(i)}.
	\end{align*}
	Hence, a point $w$ is contained in $P$ if and only if
	\begin{align*}
	w_i \ \le
	& \quad	\theta\alpha_i+\mu\text{ for }i\ne j \; \text{ and } \;
	w_j \ = \theta\alpha_j+\mu
	\quad \text{ for } \theta\in[-1,1] 
	\text{ and } \mu\in\R.
	\end{align*}
	By eliminating $\mu$, this is equivalent to
	\begin{align*}
	w_j-w_i+\theta (\alpha_i-\alpha_j)\ge 0 \; \text{ for all }
  i\in [m] \setminus \{j\}, \; -1\le \theta \le 1.
	\end{align*}
	Eliminating $\theta$ then gives
	\begin{align*}
&	\frac{w_j-w_i}{\alpha_j-\alpha_i}\begin{cases}
\quad \le \theta \le 1 & \text{ if } \alpha_i >\alpha_j,\\
\quad \ge \theta \ge -1 & \text{ if } \alpha_i <\alpha_j,
\end{cases} 
\end{align*}
which yields
$\displaystyle \frac{w_j-w_i}{\alpha_j-\alpha_i} \ge \frac{w_k-w_j}{\alpha_k-\alpha_j} \text{ for all }i,k\in[m] \text{ with } i< j < k$ 
and $w_i-w_j \le | \alpha_i - \alpha_j |$ for all $i \in [m] \setminus \{j\}$.
Hence,
		\begin{align}
		\label{eq:defp1}
P \ = \ & \left\{w\in\R^m:	\; 	w_i-w_j \le |\alpha_i-\alpha_j|
  \text{ for } i\in [m] \setminus \{j\} \text{ and } \right.\\
      \label{eq:defp2}
	 &  \left. 
	  w_i(\alpha_k-\alpha_j)
	 -w_j(\alpha_k-\alpha_i)
	 +w_k(\alpha_j-\alpha_i) 
	 \le 0 \text{ for }i,k\in[m] \text{ with } i < j < k \right\}.
	\end{align}
We claim that none of the inequalities in the definition of~$P$ is redundant. Namely, for each inequality 
\begin{align*}
	w_i(\alpha_k-\alpha_j)
	-w_j(\alpha_k-\alpha_i)
	+w_k(\alpha_j-\alpha_i) 
	\le 0
\end{align*}
in~\eqref{eq:defp2},
the point $e^{(i)} + e^{(j)} + e^{(k)}$ satisfies this particular
inequality with equality and all of the other inequalities strictly. Similarly,
for the inequalities 
in~\eqref{eq:defp1},
it suffices to
consider the point $\alpha_j e^{(i)} + \alpha_i e^{(j)}$ in case $i < j$ 
and $\alpha_i e^{(i)} + \alpha_j e^{(j)}$ in case $i > j$. By 
Remark~\ref{rem:X_circuits_as_facets}, the polyhedron $P$ is full-dimensional.

	Hence, by Proposition~\ref{prop:polyhedron_x_finite_circuits},
	the normalized $X$-circuits in $N_j$ are exactly the ones
	given in the statement of the theorem.
\end{proof}

\subsection*{The supports of \texorpdfstring{$X$}{X}-circuits.}

As stated in the Introduction,
in the classical case of affine circuits, the normalized
circuits are uniquely determined by
their supports. Moreover, 
as a consequence of Theorem~\ref{prop:circuits-univariate2}, in
the case $X=[-1,1]$, the normalized $X$-circuits are uniquely determined by their signed
supports. As explained in the following, this phenomenon does not extend to
sublinear circuits for arbitrary sets.

In the case of sublinear circuits supported on two elements, the two non-zero 
entries are additive inverses of each other, so that, for a given $\beta$
and a given support, indeed this signed support uniquely determines the
circuit up to a positive factor.
In order to exhibit the mentioned phenomenon, we present a
counterexample with support size~3.

\begin{example}
\label{ex:counterex-support}
Let $\mathcal{A} = \{ \alpha_1, \alpha_2, \alpha_3 \} =
  \left\{ 
  (0,0)^T, (1,0)^T, (0,1)^T
  \right\} \subset \R^2$. We show that for $\beta := \alpha_1$, there 
are two non-proportional circuits which are supported on all three elements
of $\mathcal{A}$. Specifically, we construct an example, in which
\[
  \nu^{(1)} := (-2,1,1)^T \quad \text{ and } \quad \nu^{(2)} := (-3,1,2)^T
\]
are sublinear circuits. Note that both of them have the same signed support,
but they are not multiples of each other.
Observe that
\[ 
  -\mathcal{A} \nu^{(1)} = (-1,-1)^T,
  \qquad
   -\mathcal{A} \nu^{(2)} = (-1,-2)^T.
\]
We set up $X$ in such a way that $(-1,-1)^T$ and $(-1,-2)^T$
are normal vectors of $X$. For example, choose $X$ as the cone in $\R^2$
spanned by $(-1,1)^T$ and $(2,-1)^T$.
We obtain
\[
  - \mathcal{A}^T X \ = \ \pos \left\{
    - \left( \begin{array}{cc}
      0 & 0 \\
      1 & 0 \\
      0 & 1
      \end{array} \right)
      \left( \begin{array}{r} -1 \\ 1 \end{array} \right),
      - \left( \begin{array}{cc}
      0 & 0 \\
      1 & 0 \\
      0 & 1 \\
      \end{array} \right)
      \left( \begin{array}{r} 2 \\ -1 \end{array} \right)
      \right\}
      \ = \ \pos  \left\{
      \left( \begin{array}{r} 0 \\ 1 \\ -1 \end{array} \right),
      \left( \begin{array}{r} 0 \\ -2 \\ 1 \end{array}  \right)
      \right\}.  
\]
Since $N_\beta^{\circ} = N_ {(0,0)}^{\circ} = 
\R \cdot (1,1,1)^T + \R \times \R_{\le 0} \times \R_{\le 0}$,
it can be verified (for example, using a computer calculation)
that $\nu^{(1)}$ and $\nu^{(2)}$ are indeed sublinear circuits, and
they are the only ones having a negative component $\nu_\beta$ 
up to scaling by a positive factor.
\end{example}

In the example, the two distinct sublinear circuits 
$\nu^{(i)}$, $1 \le i \le 2$, with identical signed supports, have different 
expressions 
$\mathcal{A} \nu^{(i)}$, that is, 
$\mathcal{A} \nu^{(1)} \neq \mathcal{A} \nu^{(2)}$.
By the following statement,
it is not possible to have two distinct sublinear circuits
with the same signed support and identical non-zero 
values of $\mathcal{A} \nu^{(i)}$.

\begin{lemma}
\label{le:support-lemma1}
Let $\nu^{(1)}$ and $\nu^{(2)}$ be sublinear circuits with the same signed support 
and such that $\mathcal{A} \nu^{(1)} = \mathcal{A} \nu^{(2)}$. Then
$\nu^{(1)}$ and $\nu^{(2)}$ are proportional, and in case
$\mathcal{A} \nu^{(1)} = \mathcal{A} \nu^{(2)} \neq 0$, the equality
$\nu^{(1)} = \nu^{(2)}$ holds.
\end{lemma}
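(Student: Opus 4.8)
The plan is a proof by contradiction: assuming $\nu^{(1)}$ and $\nu^{(2)}$ are not proportional, I would exhibit a two-dimensional cone $C' \subseteq N_\beta$ on which the map $\nu \mapsto \sigma_X(-\cA\nu)$ is linear and which contains $\nu^{(1)}$ in its relative interior, contradicting condition~(2) in the definition of an $X$-circuit. First the setup: each $\nu^{(i)}$ is a nonzero vector whose entries sum to $0$, hence has at least one negative entry, and membership in some $N_{\beta^{(i)}}$ allows a negative entry only in position $\beta^{(i)}$; equality of the signed supports therefore forces $\beta^{(1)} = \beta^{(2)} =: \beta$, so $\nu^{(1)},\nu^{(2)} \in N_\beta$ have the same support $S$ and the same entry signs on $S$. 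Write $v := \cA\nu^{(1)} = \cA\nu^{(2)}$; condition~(1) applied to $\nu^{(1)}$ gives $\sigma_X(-v) < \infty$.

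Now suppose $\nu^{(1)}$ and $\nu^{(2)}$ are not proportional. For $\alpha \in S$ the ratios $\rho_\alpha := \nu^{(1)}_\alpha/\nu^{(2)}_\alpha$ are positive and not all equal. After interchanging $\nu^{(1)}$ and $\nu^{(2)}$ if necessary (if all $\rho_\alpha > 1$, then all their reciprocals are $< 1$, so the interchange makes the minimal ratio $\le 1$), I may assume $t := \min_{\alpha \in S}\rho_\alpha \in (0,1]$. Set $w := \nu^{(1)} - t\,\nu^{(2)}$. A componentwise check shows $w \in N_\beta$: for $\alpha \in S\setminus\{\beta\}$ one has $\nu^{(2)}_\alpha > 0$ and $w_\alpha = \nu^{(2)}_\alpha(\rho_\alpha - t) \ge 0$; for $\alpha \notin S$, $w_\alpha = 0$; $w_\beta = \nu^{(2)}_\beta(\rho_\beta - t) \le 0$; and $\oneb^T w = 0$. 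Non-proportionality gives $w \ne \zerob$, and $w$ vanishes at any minimizing index, so $\supp w \subsetneq S$; hence $\nu^{(2)}$ and $w$ are linearly independent and $C' := \cone\{\nu^{(2)}, w\}$ is a two-dimensional subcone of $N_\beta$.

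Next I would check linearity on $C'$. Since $\cA w = v - tv = (1-t)v$, for $\nu = s\,\nu^{(2)} + r\,w \in C'$ with $s,r \ge 0$ one has $-\cA\nu = -\big(s + r(1-t)\big)v$ with $s + r(1-t) \ge 0$ (using $t \le 1$), so $\sigma_X(-\cA\nu) = \big(s + r(1-t)\big)\,\sigma_X(-v)$. As $(s,r)\mapsto s\,\nu^{(2)} + r\,w$ is the restriction of a linear isomorphism onto $\spann\{\nu^{(2)},w\}$, the right-hand side is the restriction to $C'$ of a linear functional; thus $\nu \mapsto \sigma_X(-\cA\nu)$ is finite and linear on the two-dimensional cone $C' \subseteq N_\beta$. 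But $\nu^{(1)} = t\,\nu^{(2)} + 1\cdot w$ is a strictly positive combination of the two generators of $C'$, so $\nu^{(1)} \in \relint C'$, contradicting condition~(2) for $\nu^{(1)}$. Hence $\nu^{(1)}$ and $\nu^{(2)}$ are proportional, say $\nu^{(1)} = c\,\nu^{(2)}$ with $c>0$; and if $v \ne \zerob$, then $v = \cA\nu^{(1)} = c\,\cA\nu^{(2)} = c\,v$ forces $c = 1$, so $\nu^{(1)} = \nu^{(2)}$.

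The one point that must be handled carefully is the normalization $t \le 1$. Without it, $s + r(1-t)$ changes sign along $C'$, and $\sigma_X(-\cA\nu)$ is then in general only piecewise linear there: restricted to the line $\R v$ it is a ``V'', with one-sided slopes $\sigma_X(-v)$ and $-\sigma_X(v)$ that need not agree, so the contradiction would collapse. The interchange of $\nu^{(1)}$ and $\nu^{(2)}$ is precisely what always lands us in the good case, and it is harmless because both target conclusions — proportionality and, when $v \ne \zerob$, the equality $\nu^{(1)} = \nu^{(2)}$ — are symmetric in the two circuits.
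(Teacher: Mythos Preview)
Your proof is correct and follows essentially the same approach as the paper's: both argue by contradiction, build a two-dimensional cone inside $N_\beta$ from linear combinations of $\nu^{(1)}$ and $\nu^{(2)}$, and use $\cA\nu^{(1)}=\cA\nu^{(2)}$ to see that $-\cA$ maps this cone into the ray $\R_{\ge 0}\cdot(-v)$, whence $\sigma_X(-\cA\,\cdot\,)$ is linear there and the circuit sits in the relative interior. The only difference is the concrete choice of generators: the paper takes the symmetric pair $\nu^{(1)}\pm\varepsilon\nu^{(2)}$ for small $\varepsilon>0$, which avoids your swap and the case analysis on $t\le 1$, while your maximal-$t$ construction makes the non-negativity of the scalar $s+r(1-t)$ explicit; both are equally valid.
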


\begin{proof}
Let $\nu^{(1)}$ and $\nu^{(2)}$ have the same signed support with 
$\mathcal{A} \nu^{(1)} = \mathcal{A} \nu^{(2)}$. Set
$\beta$ as the index of the negative component of 
$\nu^{(1)}$ and $\nu^{(2)}$.

Assuming $\nu^{(1)} \neq \nu^{(2)}$, the precondition 
$\supp \nu^{(1)} = \supp \nu^{(2)}$ implies that for
sufficiently small $\varepsilon > 0$, the vectors
\[
  \nu' := \nu^{(1)} - \varepsilon \nu^{(2)} 
  \quad \text{ and }
  \nu'' := \nu^{(1)} + \varepsilon \nu^{(2)}
\]
are contained in $N_{\beta} \setminus \{\mathbf{0}\}$ as well. Observe that
$\sigma_X(-\mathcal{A} \nu') =\sigma_X(-\mathcal{A} \nu^{(1)}) - \varepsilon \sigma_X(-\mathcal{A} \nu^{(2)}) < \infty$ and 
$\sigma_X(-\mathcal{A} \nu'') = 
\sigma_X(-\mathcal{A} \nu^{(1)}) + \varepsilon \sigma_X(-\mathcal{A} \nu^{(2)}) < \infty$.  Moreover,
$\nu^{(1)}$ is a convex combination 
$\nu^{(1)} = \frac{1}{2} \nu' + \frac{1}{2} \nu''$ for which
$\nu \mapsto \sigma_X(-\cA \nu)$ is linear on $[\nu', \nu'']$.

Since 
$\cA\nu^{(1)} = \cA \nu^{(2)}$, the vectors $\nu^{(1)}$ and
$\nu^{(2)}$ are not proportional or we have 
$\mathcal{A} \nu^{(1)} = \mathcal{A} \nu^{(2)} = \mathbf{0}$.
In both cases, if $\nu^{(1)}$ and $\nu^{(2)}$ are non-proportional,
then this contradicts that $\nu^{(1)}$ is a sublinear circuit.
\end{proof}

\section{Necessary and sufficient conditions\label{se:conditions}}

In this section, we obtain some criteria for elements $\nu\in\bigcup_{\beta\in\cA}N_{\beta}$ to be $X$-circuits of some fixed set $X$. These criteria 
only involve the supports rather than the exact values of the coefficients.

For an $X$-circuit $\nu$, let $\nu^+:=\{\alpha:\nu_\alpha\ge 0\}$ and 
$\nu^-$ denote the single index $\beta$
with $\nu_{\beta} < 0$.
First recall that 
in the classical case of affine matroids, any simplicial circuit $\nu$ 
supported on at least three elements has no other support point
except $\nu^-$ contained in the relative interior of the convex hull of all its support points, and the coefficients of $\nu^+$ are positive multiples of the
barycentric coordinates of $\beta$, 
i.e., 
$\relint \conv(\supp \nu) \cap \nu^+ = \emptyset$ and 
$\cA \nu = \mathbf{0}$
(see, e.g., \cite{FdW-2019}).
In the following theorem, we give a generalization of this property to the
case of $X$-circuits.

\begin{theorem}\label{th:convex_hull_lambda}
	Let $\lambda \in  \Lambda_X(\cA,\beta)$ for some $\beta\in\cA$. Then  $\relint\conv(\supp \lambda)\cap\lambda^+=\emptyset$. Moreover, if
	  $\beta\in\conv(\lambda^+)$, then $\cA\lambda= \mathbf{0}$.
\end{theorem}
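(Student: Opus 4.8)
My plan is to derive both assertions from a single device: starting from a hypothetical counterexample, I build a two-dimensional cone $C \subseteq N_\beta$ on which $\nu \mapsto \sigma_X(-\cA \nu)$ is linear and which contains $\lambda$ in its relative interior, contradicting condition~(2) in the definition of an $X$-circuit. Concretely, $C$ will have the form $C = \cone\{\lambda - \varepsilon w,\, \lambda + \varepsilon w\}$ for a suitable direction $w \in \R^{\cA}$ and all small $\varepsilon > 0$. Three properties must be arranged: (a) $\supp w \subseteq \supp\lambda$, so that $\lambda \pm \varepsilon w \in N_\beta$ for small $\varepsilon$ --- on $\supp\lambda \setminus \{\beta\}$ the strictly positive entries of $\lambda$ absorb the perturbation, off $\supp\lambda$ nothing moves, and the $\beta$-entry stays $-1$; (b) $\cA w \in \R \cdot \cA\lambda$, which, using positive homogeneity of $\sigma_X$ and $\sigma_X(-\cA\lambda) < \infty$, makes $\nu \mapsto \sigma_X(-\cA\nu)$ finite and linear on all of $C$; and (c) $w$ and $\lambda$ linearly independent, so that $C$ is genuinely two-dimensional. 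Since $\lambda = \tfrac{1}{2}(\lambda - \varepsilon w) + \tfrac{1}{2}(\lambda + \varepsilon w) \in \relint C$, properties (a)--(c) immediately produce the contradiction.

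For $\relint\conv(\supp\lambda) \cap \lambda^+ = \emptyset$, I would argue by contradiction: suppose $\alpha_0 \in \supp\lambda$ with $\lambda_{\alpha_0} > 0$ lies in $\relint\conv(\supp\lambda)$. Then $\lvert\supp\lambda\rvert \ge 3$ (otherwise $\conv(\supp\lambda)$ is a point or a segment and $\alpha_0$ would be a vertex), and since $\alpha_0$ is in the relative interior I can write $\alpha_0 = \sum_{\alpha \in \supp\lambda} \mu_\alpha \alpha$ with all $\mu_\alpha > 0$ and $\sum_\alpha \mu_\alpha = 1$. Take $w := \mu - e^{(\alpha_0)}$. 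Then $\oneb^T w = 0$, $\cA w = \alpha_0 - \alpha_0 = \mathbf{0}$ (so $\nu\mapsto\sigma_X(-\cA\nu)$ is in fact constant on $C$), and $\supp w \subseteq \supp\lambda$; moreover $w \neq 0$, and $w$ is not proportional to $\lambda$ because $w_\beta = \mu_\beta > 0$ forces the proportionality factor to be negative, which is incompatible with $w_{\alpha_1} = \mu_{\alpha_1} > 0$ at a third support point $\alpha_1 \in \supp\lambda \setminus \{\beta, \alpha_0\}$. So (a)--(c) hold and we are done.

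For the second assertion, assume $\beta \in \conv(\lambda^+)$ and write $\beta = \sum_{\alpha \in \lambda^+} \gamma_\alpha \alpha$ with $\gamma_\alpha \ge 0$ and $\sum_\alpha \gamma_\alpha = 1$. Set $\mu := \gamma - e^{(\beta)} \in \R^{\cA}$, so that $\mu \in N_\beta$, $\supp\mu \subseteq \lambda^+ \cup \{\beta\} \subseteq \supp\lambda$, and $\cA\mu = \beta - \beta = \mathbf{0}$. Now consider $w := \lambda - \mu$: it satisfies $\oneb^T w = 0$, $\supp w \subseteq \supp\lambda$, $\cA w = \cA\lambda - \mathbf{0} = \cA\lambda$, and $w_\beta = (-1) - (-1) = 0$. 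Hence, if $w \neq 0$, then $w$ is not proportional to $\lambda$ (as $\lambda_\beta \neq 0$), so (a)--(c) hold --- here $\sigma_X(-\cA(\lambda \pm \varepsilon w)) = (1 \mp \varepsilon)\,\sigma_X(-\cA\lambda)$, finite and linear along $C$ --- and $\lambda$ cannot be an $X$-circuit, a contradiction. Therefore $w = 0$, i.e.\ $\lambda = \mu$, and unwinding this gives $\beta = \sum_{\alpha \neq \beta} \lambda_\alpha \alpha$, that is $\cA\lambda = \mathbf{0}$.

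I do not expect a serious obstacle here; the care is in the bookkeeping. The two points one must not skip are: checking that $\lambda \pm \varepsilon w$ really stays in $N_\beta$ for small $\varepsilon$ (this is exactly where $\supp w \subseteq \supp\lambda$ and the normalization $\lambda_\beta = -1$ are used), and confirming that a support function which is constant on $[\lambda-\varepsilon w,\lambda+\varepsilon w]$, resp.\ of the form $t \mapsto (1 \mp \varepsilon t)\,\sigma_X(-\cA\lambda)$, extends linearly over the whole cone $C$ --- which is immediate from positive homogeneity of $\sigma_X$ together with $\cA w \in \R \cdot \cA\lambda$. The actual substance is the recognition that a positive-coefficient relative-interior point, resp.\ a representation of $\beta$ as a convex combination over $\lambda^+$ that differs from the coefficient vector $(\lambda_\alpha)_{\alpha\neq\beta}$, is precisely what supplies the linearity direction $w$.
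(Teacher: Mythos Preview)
Your proof is correct and follows essentially the same strategy as the paper: in both parts one constructs a perturbation direction $w$ with $\supp w \subseteq \supp\lambda$ and $\cA w \in \R\cdot\cA\lambda$, so that $\lambda \pm \varepsilon w \in N_\beta$ and $\nu \mapsto \sigma_X(-\cA\nu)$ is linear on the resulting two-dimensional cone. The paper's $\nu^{(1)},\nu^{(2)}$ are exactly your $\lambda \mp \varepsilon w$ for specific choices of $w$.

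The only noteworthy variation is in the second assertion: the paper perturbs along the auxiliary vector $\lambda'$ itself (so $\cA\lambda' = \mathbf{0}$ and the support function is \emph{constant} on the segment), whereas you perturb along $w = \lambda - \mu$ (so $\cA w = \cA\lambda$ and the support function is genuinely \emph{linear} but non-constant). Both work equally well; your packaging into a single template (a)--(c) is arguably tidier. One cosmetic slip: since $\cA w = \cA\lambda$, you get $\sigma_X(-\cA(\lambda \pm \varepsilon w)) = (1 \pm \varepsilon)\,\sigma_X(-\cA\lambda)$, not $(1 \mp \varepsilon)$; this does not affect the linearity conclusion.
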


\begin{proof}
	For the first statement, suppose there exists $\bar{\alpha}\in\lambda^+$ such that 
	$\bar{\alpha}\in\relint\conv(\supp \lambda)$. 
Hence,
	there exist
	$\theta_\alpha\in [0,1)$ 
	for $\alpha\in (\lambda^+ \setminus\{\bar{\alpha}\}) \cup \{\beta\}$
	such that
\[
	\sum\limits_{\alpha\in\lambda^+\setminus\{\bar{\alpha}\}} \theta_\alpha+\theta_\beta=1
	\quad \text{ and } \quad 
	\sum\limits_{\alpha\in\lambda^+\setminus\{\bar{\alpha}\}} \theta_\alpha \alpha+\theta_\beta\beta=\bar{\alpha}.
\]
 Let $\tau\in (0,1]$ be maximal such that  $\tau \theta_\alpha\lambda_{\bar{\alpha}}\le \lambda_{\alpha}$ for 
 $\alpha\in (\lambda^+\setminus\{\bar{\alpha}\}) \cup \{\beta\}$ and $(1+\tau)\lambda_{\bar{\alpha}}<1$. As $\lambda_{\bar{\alpha}}<1$, 
	this does indeed exist. 
	The two vectors $\nu^{(1)}$ and $\nu^{(2)}$ defined by
	\begin{align*}
	\nu^{(1)}_\alpha=  & \begin{cases}
	\lambda_\alpha+\tau\theta_\alpha\lambda_{\bar{\alpha}} & 
	\text{ for }\alpha \in (\lambda^+ \setminus \{\bar{\alpha}\}) \cup \{\beta\}, \\
	(1-\tau)\lambda_{\bar{\alpha}} & \text{ for }\alpha=\bar{\alpha}
	\end{cases} \\
	\text{and} \quad \nu^{(2)}_\alpha=  & \begin{cases}
	\lambda_\alpha-\tau\theta_\alpha\lambda_{\bar{\alpha}} & 
	\text{ for }\alpha \in (\lambda^+ \setminus \{ \bar{\alpha} \}) \cup \{\beta\},\\
	(1+\tau)\lambda_{\bar{\alpha}} & \text{ for }\alpha=\bar{\alpha}
	\end{cases}
	\end{align*}
	(and 0 outside of $\lambda^+ \cup \{\beta\}$)
	are non-proportional
	elements of $N_\beta$ with $(\nu^{(i)})^+\subset \lambda^+$ for $i=1,2$.
	Moreover,
	$\cA\nu^{(i)}=  \cA\lambda$ for $i=1,2$ and 
	$\lambda\in\relint[\nu^{(1)},\nu^{(2)}]$,
	which contradicts the $X$-circuit property of $\lambda$.

For the second statement,
suppose $\beta 	\in \conv(\lambda^+)$ and $\cA\lambda\ne \mathbf{0}$. Then,
there exists a normalized element $\lambda' \in N_\beta$ with 
$\lambda^+=(\lambda')^+$ and  $\cA\lambda'=\mathbf{0}$.
	Let $\tau$ be the maximal real number such that
	$\nu^{(1)} := \lambda - \tau \lambda' \in N_{\beta}$.
	That maximum clearly exists,
	and, since $(\lambda')^+ =\lambda^+$, the number 
	$\tau$ is positive.
	Moreover, since $\lambda$ and $\lambda'$ are normalized, we
	have $\tau\le 1$. 

	The sublinear circuit
	$\nu^{(2)} := 
	\lambda + \tau\lambda' $ is clearly contained in $N_{\beta}$ as well.
	Since $\lambda, \lambda'$ are non-proportional and $\tau > 0$,
	the sublinear circuits
	$\nu^{(1)}$ and $\nu^{(2)}$ are non-proportional.
	Further, since $\nu^{(1)} + \nu^{(2)} = 2\lambda$,
	we see that $\lambda$ 
	can be written as a convex combination of the two non-proportional
	elements $\nu^{(1)} \in N_{\beta}$ 
	and $\nu^{(2)} \in N_{\beta}$.	
	Due to $\mathcal{A} \lambda' =\mathbf{0}$, we obtain
	$
		\sigma_X(-\mathcal{A} \nu^{(1)})  = 
		\sigma_X(-\mathcal{A} \nu^{(2)}) =\sigma_X(-\mathcal{A} \lambda)
  $
  and thus
  \[
  \sigma_X(-\mathcal{A} \lambda)= \frac{1}{2}(\sigma_X(-\mathcal{A} \nu^{(1)})  + 
		\sigma_X(-\mathcal{A} \nu^{(2)})).
  \]
Hence, $\lambda\notin\Lambda_{X}(\cA,\beta)$.
\end{proof}

We can provide the following two cases of the converse direction of Theorem \ref{th:convex_hull_lambda}. In particular, both cases will be applicable
for $X=[-1,1]^n$. We can assume that $\beta \in \conv(\lambda^+)-\rec(X)^*$
since otherwise any $\lambda \in N_{\beta} \setminus\{\mathbf{0}\}$ 
will have $\sigma_X(-\cA\lambda) = \infty$ and hence, violate condition (1)
in the definition of an $X$-circuit.

\begin{lemma}\label{prop:X_circuits_edge_cases}
  Given $\beta \in \cA$, let $\lambda\in N_\beta \setminus \{\mathbf{0}\}$ 
	be normalized with $\beta\in \conv(\lambda^+)-\rec(X)^*$ and 
	such that $\lambda^+$ consists of affinely independent vectors.
	\begin{enumerate}
		\item If $|\supp \lambda|=2$ or
		\item if $X$ is full-dimensional,
		$\beta\in\conv(\lambda^+)$, $\cA\lambda=\mathbf{0}$,
	\end{enumerate}
	then $\lambda\in\Lambda_X(\cA,\beta)$.
\end{lemma}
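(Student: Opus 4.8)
The plan is to verify directly that $\lambda$ satisfies the two defining conditions of an $X$-circuit, using Proposition~\ref{prop:polyhedron_x_finite_circuits} as the main tool: it suffices to show that $\cone\{\lambda\}$ is a ray of $\mathcal{O}(-\cA^T X + N_\beta^\circ)$, equivalently (by Remark~\ref{rem:X_circuits_as_facets}) that $\lambda$ is an outer normal vector to a facet of the full-dimensional polyhedron $P := -\cA^T X + N_\beta^\circ$. First I would record that the hypothesis $\beta \in \conv(\lambda^+) - \rec(X)^*$ guarantees $\sigma_X(-\cA\lambda) < \infty$, so condition~(1) holds and $\lambda$ lies in the support $\rec(P)^\circ$ of the normal fan; it remains to show $\cone\{\lambda\}$ is one-dimensional as a face of $\mathcal{O}(P)$, i.e.\ that the linearity domain of $\nu \mapsto \sigma_X(-\cA\nu)$ containing $\lambda$ is exactly the ray $\R_{\ge 0}\lambda$ near $\lambda$ — this is what condition~(2) demands.

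\emph{Case (1): $|\supp\lambda| = 2$.} Here $\lambda = e^{(k)} - e^{(j)}$ (normalized), $\lambda^+ = \{\alpha_k\}$ is trivially affinely independent, and $\cA\lambda = \alpha_k - \alpha_j$. The plan is to show that any $\nu \in N_\beta$ lying in a two-dimensional linearity cone of $\sigma_X(-\cA\,\cdot\,)$ through $\lambda$ must be proportional to $\lambda$. Since $\supp\nu$ can only use the two coordinates of $\supp\lambda$ plus possibly new ones, I would argue that if $\lambda \in \relint[\nu',\nu'']$ with $\sigma_X(-\cA\,\cdot\,)$ linear on $[\nu',\nu'']$, then by the support constraint $\supp\nu', \supp\nu'' \subseteq \supp\lambda$ (small perturbations of a two-support vector cannot create new positive entries while staying in $N_\beta$ with the sign pattern forced), so $\nu', \nu''$ are both two-support vectors in $N_\beta$ with negative entry $\beta$, forcing them proportional to $\lambda$. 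This is essentially the $|\supp\nu| = 2$ remark made in the text before Example~\ref{ex:counterex-support}.

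\emph{Case (2): $X$ full-dimensional, $\beta \in \conv(\lambda^+)$, $\cA\lambda = \zerob$.} Now $\sigma_X(-\cA\lambda) = \sigma_X(\zerob) = 0$. The key point is that $\lambda$ is the (up to scaling) unique simplicial circuit of the affine matroid on the affinely independent set $\lambda^+ \cup \{\beta\}$ — this is the classical uniqueness of simplicial circuits by their support. I would suppose $\lambda = \tfrac12\nu^{(1)} + \tfrac12\nu^{(2)}$ with $\nu^{(1)}, \nu^{(2)} \in N_\beta$ non-proportional and $\sigma_X(-\cA\,\cdot\,)$ linear on $[\nu^{(1)},\nu^{(2)}]$, and derive a contradiction. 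Writing $\nu^{(2)} = 2\lambda - \nu^{(1)}$, set $\mu := \nu^{(1)} - \lambda \ne \zerob$; then $\nu^{(1)} = \lambda + \mu$, $\nu^{(2)} = \lambda - \mu$, and $\oneb^T\mu = 0$, $\mu_{\setminus\beta}$ unconstrained in sign only insofar as $\nu^{(1)}_{\setminus\beta}, \nu^{(2)}_{\setminus\beta} \ge \zerob$. Linearity of $\sigma_X(-\cA\,\cdot\,)$ on the segment forces $\sigma_X(-\cA(\lambda+\mu)) + \sigma_X(-\cA(\lambda-\mu)) = 2\sigma_X(-\cA\lambda) = 0$; since $\sigma_X$ is sublinear and $\sigma_X(y) + \sigma_X(-y) \ge 0$ with equality iff $\sigma_X$ is linear at $\pm y$, and since $X$ is full-dimensional (so $\sigma_X$ is finite only... ) we get that $\sigma_X$ is linear through $\pm\cA\mu$; combined with $\sigma_X(-\cA\lambda)=0$ and finiteness we can push this to conclude $\cA\mu$ lies in the lineality of $\sigma_X$, which for full-dimensional $X$ means $\cA\mu = \zerob$ unless $X$ is unbounded in a way... the cleanest route: $\cA\nu^{(1)}$ and $\cA\nu^{(2)}$ satisfy $\sigma_X(-\cA\nu^{(1)}), \sigma_X(-\cA\nu^{(2)}) < \infty$, and I claim $\cA\mu = \cA\nu^{(1)} - \cA\lambda$ must vanish: since $\lambda$ is the unique affine-circuit supported on $\supp\lambda$ and $(\nu^{(i)})^+ \subseteq \lambda^+$, if $\cA\nu^{(i)} \ne \zerob$ then the affinely independent set $\lambda^+ \cup \{\beta\}$ would admit a second relation — impossible. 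So $\cA\nu^{(1)} = \cA\nu^{(2)} = \zerob$, whence by the uniqueness of affine circuits on affinely independent supports, $\nu^{(1)}, \nu^{(2)}$ are both proportional to $\lambda$ — contradiction. Then $\lambda$ satisfies~(2), so $\lambda \in \Lambda_X(\cA,\beta)$.

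The main obstacle I anticipate is Case~(2): making rigorous the step that forces $\cA\nu^{(i)} = \zerob$ for every $\nu^{(i)}$ appearing in a two-dimensional linearity cone through $\lambda$. One must handle the possibility that $\supp\nu^{(i)}$ strictly contains new indices outside $\supp\lambda$ (so that "$(\nu^{(i)})^+ \subseteq \lambda^+$" is not automatic); controlling this requires choosing $\varepsilon$ small in the perturbation $\nu^{(i)} = \lambda \pm \mu$ so that $\supp\mu \subseteq \supp\lambda$, which is exactly where affine independence of $\lambda^+$ and the normalization get used. An alternative, possibly cleaner, implementation is to invoke Theorem~\ref{th:convex_hull_lambda}'s contrapositive together with Lemma~\ref{le:support-lemma1} to reduce to checking that no two-dimensional linearity cone through $\lambda$ exists, but the affine-independence hypothesis strongly suggests the direct matroid-uniqueness argument above is the intended one.
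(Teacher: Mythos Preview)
Your Case~(1) is essentially the paper's argument. Your worry about support containment (the ``main obstacle'') is misplaced: if $\lambda = \tfrac12\nu^{(1)} + \tfrac12\nu^{(2)}$ with $\nu^{(1)},\nu^{(2)}\in N_\beta$, then for every $\alpha\neq\beta$ the entries $\nu^{(1)}_\alpha,\nu^{(2)}_\alpha$ are non-negative and average to $\lambda_\alpha$, so $\lambda_\alpha=0$ forces $\nu^{(1)}_\alpha=\nu^{(2)}_\alpha=0$. Thus $\supp\nu^{(i)}\subseteq\supp\lambda$ is automatic, no $\varepsilon$-argument needed.

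The genuine gap is in Case~(2), in your step forcing $\cA\nu^{(i)}=\zerob$. Your second attempt is logically backwards: the set $\lambda^+\cup\{\beta\}$ is \emph{not} affinely independent (indeed $\beta\in\conv(\lambda^+)$), and $\cA\nu^{(i)}\neq\zerob$ means $\nu^{(i)}$ is \emph{not} an affine relation at all, so no ``second relation'' arises and no contradiction follows. What affine independence of $\lambda^+$ gives you is only the implication \emph{if} $\cA\nu^{(i)}=\zerob$ \emph{then} $\nu^{(i)}$ is proportional to $\lambda$; it does not give you the hypothesis. Your first attempt is closer but you abandoned it without finishing. The paper's move is to translate $X$ so that $\zerob\in\inter X$ (translation preserves $X$-circuits); then $\sigma_X(y)>0$ for every $y\neq\zerob$. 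Since $\cA\lambda=\zerob$ gives $\sigma_X(-\cA\lambda)=0$, linearity on $[\nu^{(1)},\nu^{(2)}]$ would require $\theta_1\sigma_X(-\cA\nu^{(1)})+\theta_2\sigma_X(-\cA\nu^{(2)})=0$, which is impossible when both $\cA\nu^{(i)}\neq\zerob$. If instead $\cA\nu^{(1)}=\zerob$, then $\cA\nu^{(2)}=\zerob$ as well, and now uniqueness of barycentric coordinates on the affinely independent set $\lambda^+$ forces $\nu^{(1)}=\nu^{(2)}=\lambda$ (after normalization). This is where full-dimensionality is actually used, and it is the missing ingredient in your sketch.
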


Note that, since in the theorem $\lambda^+$ consists of affinely independent vectors, we have $\relint\conv(\lambda^+)\cap\lambda^+=\emptyset$. 

\begin{remark} If the property of full-dimensionality is omitted in the second condition,
the statement is not true anymore. As a counterexample, let $X$ be the singleton set
$X=\{1\}$ and let
$\mathcal{A} = \{1,2,3\}$. Then $\lambda = \frac{1}{2}(1,-2,1)^T$ is not an $X$-circuit, because
$\lambda = \frac{1}{2}\lambda^{(1)} + \frac{1}{2} \lambda^{(2)}$ with 
$\lambda^{(1)} = (1,-1,0)^T$
and $\lambda^{(2)} = (0,-1,1)^T$ and $\nu \to \sigma_X(-\cA \nu)$ is linear on
$[ \lambda^{(1)}, \lambda^{(2)} ]$. Note that the functions
$x \mapsto \exp(\alpha^T x)$, $\alpha \in \cA$ are not linearly independent on $X$.
\end{remark}
\begin{proof}
	For the first statement, suppose there exist $\nu^{(1)},\nu^{(2)}\in N_\beta$ 
	decomposing $\lambda$. Then $\supp(\nu^{(i)})\subseteq\supp \lambda$
	for $i \in \{1,2\}$, because the cancellation of terms not contained in 
	$\supp \lambda$ is not possible, as the negative term always corresponds to $\beta$. Since $\nu^{(1)}_{\beta} <0$ and 
	$\nu^{(2)}_{\beta} < 0$ and $|\supp \lambda| = 2$, both $\nu^{(1)}$ and
	$\nu^{(2)}$ are proportional to $\lambda$.
	
	Now consider the second condition. Since the property of being an $X$-circuit
	is invariant under translation of $X$, we can assume without loss of
	generality that $\mathbf{0}\in\inter X$.
	Suppose that there exist non-proportional,
	normalized $\lambda^{(1)},\lambda^{(2)}\in N_\beta$ and $\theta_1,\theta_2\in (0,1)$ with $\theta_1+\theta_2=1$ such that
	\begin{align*}
	\sum\limits_{i=1}^2 \theta_i(\lambda^{(i)}, \sigma_X(-\cA\lambda^{(i)}))=(\lambda,\sigma_X(-\cA\lambda)).
	\end{align*}
	We distinguish two cases. 
	If $\mathcal{A} \lambda^{(1)} = \mathbf{0}$, then $\mathcal{A} \lambda^{(2)} = 
	-\frac{\theta_1}{\theta_2}\cA \lambda^{(1)} = \mathbf{0}$. Hence, the uniqueness
	of the barycentric coordinates with respect to a given affinely independent ground set
	implies $\lambda^{(1)} = \lambda^{(2)}$, which is a contradiction to their
	non-proportionality.
	
	If $\mathcal{A} \lambda^{(1)} \neq \mathbf{0}$, then, as the argument above states that $\cA\lambda^{(2)}=\mathbf{0}$ implies  $\cA\lambda^{(1)}=\mathbf{0}$, we have 
	$\mathcal{A} \lambda^{(2)} = - \frac{\theta_1}{\theta_2}\mathcal{A} \lambda^{(1)} \neq \mathbf{0}$ as well.
	Then $\mathbf{0} \in \inter X$ implies
	$\sigma_X(-\cA \lambda^{(1)}) > 0$ and $\sigma_X(-\cA\lambda^{(2)}) > 0$.	
	Since $\sigma_X(-\cA \lambda) = -\sigma_X(\mathbf{0}) = 0$, the mapping
	$\nu \mapsto \sigma_X(-\cA \nu)$ cannot be linear on 
	$[\lambda^{(1)}, \lambda^{(2)}]$.

\end{proof}

\subsection*{\texorpdfstring{$X$}{X}-circuits of polyhedral 
  cones \texorpdfstring{$X$}{X}}

As discussed after Proposition~\ref{prop:polyhedron_x_finite_circuits}, in the case
of polyhedral cones $X$
we always have $\sigma_X(-\cA\lambda)=0$ whenever this value is finite. 
Since we will reduce the determination of the sublinear circuits $\Lambda_X(\cA)$
for a cone $X$ in some prominent cases to the classical affine circuits
$\Lambda_{\R^n}(\cA)$ (which of course is also a case of a polyhedral cone), 
we first look at an example for the latter case.

In the following, we examine sublinear circuits for various sets $X\subset\R^n$ (for some $n\in\N$) and support sets of the form $\mathcal{A} = \{(i,j) \ : \ 1 \le i,j \le k\}$, $k\in\N$. In these situations, we can write a sublinear circuit $\nu$ as a matrix $M^{(\nu)}\in\R^{k\times k}$ such that $M^{(\nu)}_{i,j}=\nu_{(i,j)}$ 
for all $(i,j) \in \cA$.

\begin{example}
\label{ex:circuits-r2}
For $X=\R^2$ and support $\mathcal{A} = \{(i,j) \ : \ 1 \le i,j \le 3\}$,
there are 16 sublinear circuits (up to
multiples). Namely, there are 8 sublinear circuits
with support size~3 (all of them have non-zero entries $1,-2,1$; they appear
in the three rows, the three columns and the two diagonals of the $3 \times 3$-matrix).
Moreover, there are the following 8 sublinear circuits of support size~4.
Here, the upper left entry of the matrices refers to the support point $(1,1)$:
\begin{equation}
  \label{eq:r2-supp4}
   \begin{array}{cccccc}
    \left( \begin{array}{rrr}
      1 &  0 & 1 \\
      0 &  -4 & 0 \\
      0 &  2 & 0
    \end{array} \right),
   &
      \left( \begin{array}{rrr}
      0 &  1 & 0 \\
      1 &  -3 & 0 \\
      0 &  0 & 1
    \end{array} \right)
  \end{array}
\end{equation}
as well as the 90-degree, 180-degree and 270-degree rotations about the
$(2,2)$-element of these matrices.
As $\mathbf{0}\in\interior\R^2$ and $\rec(\R^2)^*=\{\mathbf{0}\}$, this 
reflects in particular the statements of Theorem~\ref{th:convex_hull_lambda} and Lemma~\ref{prop:X_circuits_edge_cases}.
\end{example}

Next we consider the sublinear circuits of the non-negative orthant
$\R_+^n$. For a non-empty subset $S \subset [n]$ and a support point 
$\alpha \in \cA \subset \R^n$, we write $\alpha_S$ for the projection of
$\alpha$ onto the components of $S$, i.e., $\alpha_S := (\alpha_{s})_{s \in S}$.
We also set
$\cA_S := \{ \alpha_S \, : \, \alpha \in \cA\}$ and for a matrix $M$ with
$n$ rows, we set $M_S$ as the submatrix of $M$ defined
by the rows with indices in $S$, which in particular yields $M_S\lambda=(M\lambda)_S$.

\begin{theorem}\label{th:xcircuit_nonnegative_orthant}
Let $n \ge 2$ and $X=\R_+^n$ and $\beta \in \cA$. A normalized element $\lambda\in N_\beta$ with $|\lambda^+| \ge 2$
is contained in $\Lambda_X(\cA, \beta)$ if and only if 
there exists a non-empty subset $S \subset[n]$ with
$|\{\alpha_S \, : \, \alpha \in \supp \lambda\}| = |\supp \lambda|$
such that
$\lambda$ is an $\R^{|S|}$-circuit for the support set $\mathcal{A}_S$
and $(\cA \lambda)_{[n] \setminus S} > \mathbf{0}$.
\end{theorem}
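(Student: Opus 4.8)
The plan is to translate the $X$-circuit condition for $X = \R_+^n$ into a statement about the support function $\sigma_X(-\cA\nu) = \sigma_{\R_+^n}(-\cA\nu)$, which is $0$ if $-\cA\nu \le \zerob$ (i.e. $\cA\nu \ge \zerob$) and $+\infty$ otherwise. So condition (1) in the definition of an $X$-circuit forces $\cA\lambda \ge \zerob$. The key idea for both directions is to let $S = \{i \in [n] : (\cA\lambda)_i = 0\}$ be the set of coordinates on which equality holds; then finiteness of $\sigma_X$ on a neighborhood cone of $\lambda$ is governed exactly by the coordinates outside $S$ staying nonnegative, while the genuine "linearity" obstruction in condition (2) lives on the coordinates in $S$, where the restricted problem looks like the unconstrained case $\R^{|S|}$. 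I will use throughout that $\cA_S\nu = (\cA\nu)_S$.

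First I would prove the forward direction. Suppose $\lambda \in \Lambda_X(\cA,\beta)$. Then $\cA\lambda \ge \zerob$; set $S = \{i : (\cA\lambda)_i = 0\}$, so $(\cA\lambda)_{[n]\setminus S} > \zerob$. I claim $S \ne \emptyset$: if $S = \emptyset$ then $\cA\lambda > \zerob$, and then $\sigma_X(-\cA\nu)$ is identically $0$ on a full-dimensional cone around $\lambda$ in $N_\beta$ (any small perturbation keeps $\cA\nu > \zerob$), hence linear on a $2$-dimensional subcone containing $\lambda$ in its relative interior, contradicting (2). Next I claim $\lambda$ is an $\R^{|S|}$-circuit for $\cA_S$. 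Condition (1) for $\R^{|S|}$ asks $\cA_S\lambda = \zerob$, which holds by the choice of $S$. For condition (2): if $\nu \mapsto \sigma_{\R^{|S|}}(-\cA_S\nu)$ were linear on some $2$-dimensional cone $C \subset N_\beta$ with $\lambda \in \relint C$, then on a small enough subcone of $C$ around $\lambda$ all $\nu$ still satisfy $(\cA\nu)_{[n]\setminus S} > \zerob$ by continuity, so $\sigma_X(-\cA\nu) = \sigma_{\R_+^{|S|}}(-(\cA\nu)_S)$ there, and this would make $\nu\mapsto\sigma_X(-\cA\nu)$ linear on a $2$-dimensional cone containing $\lambda$ in its relative interior — contradicting that $\lambda$ is an $X$-circuit. (Here I use that $\sigma_{\R^{|S|}}$ finite forces the argument to be $\zerob$, matching $\sigma_{\R_+^{|S|}}$; the linearity domains agree near $\lambda$.) Finally, since $\lambda$ is an $\R^{|S|}$-circuit supported on $\supp\lambda$ with $|\lambda^+| \ge 2$, i.e. $|\supp\lambda| \ge 3$, the classical fact recalled before Theorem~\ref{th:convex_hull_lambda} gives $|\{\alpha_S : \alpha \in \supp\lambda\}| = |\supp\lambda|$ (the support points of an $\R^{|S|}$-circuit are affinely — hence pairwise — distinct, as $\conv$ of the support is a simplex).

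For the converse, assume such an $S$ exists. Finiteness of $\sigma_X(-\cA\lambda)$: $\cA_S\lambda = \zerob$ because $\lambda$ is an $\R^{|S|}$-circuit, and $(\cA\lambda)_{[n]\setminus S} > \zerob$ by hypothesis, so $\cA\lambda \ge \zerob$, giving $\sigma_X(-\cA\lambda) = 0 < \infty$, establishing (1). For (2), suppose $\nu\mapsto\sigma_X(-\cA\nu)$ is linear on a $2$-dimensional cone $C \subseteq N_\beta$ with $\lambda \in \relint C$. Finiteness on all of $C$ forces $\cA\nu \ge \zerob$, hence $\cA_S\nu = (\cA\nu)_S \ge \zerob$ for all $\nu \in C$, and on $C$ the value is $\sigma_{\R_+^{|S|}}(-(\cA\nu)_S) = 0 = \sigma_{\R^{|S|}}(-\cA_S\nu)$; in particular $\cA_S\nu = \zerob$ would not yet follow, so I argue differently: since $\cA_S\nu \ge \zerob$ and $\zerob = \cA_S\lambda = \theta_1 \cA_S\nu^{(1)} + \theta_2 \cA_S\nu^{(2)}$ for a representation $\lambda = \theta_1\nu^{(1)} + \theta_2\nu^{(2)}$ with $\nu^{(i)} \in C$, nonnegativity forces $\cA_S\nu^{(i)} = \zerob$, i.e. each $\nu^{(i)}$ satisfies condition (1) for $\R^{|S|}$. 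But then $\nu\mapsto\sigma_{\R^{|S|}}(-\cA_S\nu)$ is linear ($\equiv 0$) on the segment $[\nu^{(1)},\nu^{(2)}]$, and this segment is $2$-dimensional with $\lambda$ in its relative interior, contradicting that $\lambda$ is an $\R^{|S|}$-circuit — unless $\nu^{(1)},\nu^{(2)}$ are proportional. Thus $\lambda \in \Lambda_X(\cA,\beta)$.

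**Expected main obstacle.** The delicate point is matching up the two-dimensional "linearity cone" in condition (2) with the restricted unconstrained problem on $S$: I must argue that near $\lambda$ the inequality $(\cA\nu)_{[n]\setminus S} > \zerob$ is preserved (an open condition, so fine by continuity) while on $S$ exactly the circuit obstruction of $\cA_S$ reappears, and carefully handle that $\sigma_X$ linear and finite on a cone forces all of $\cA\nu \ge \zerob$ there, so that the decomposing vectors $\nu^{(1)},\nu^{(2)}$ inherit $\cA_S\nu^{(i)} = \zerob$. Getting this nonnegativity-to-equality step right (using that $\zerob$ is an extreme point of $\R_+^{|S|}$) is where the care is needed; everything else is bookkeeping with $\cA_S\nu = (\cA\nu)_S$ and the classical circuit facts.
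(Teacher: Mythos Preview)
Your proposal is correct and follows essentially the same approach as the paper: define $S=\{i:(\cA\lambda)_i=0\}$ and reduce the $\R_+^n$-circuit property to the $\R^{|S|}$-circuit property on the projected support $\cA_S$, using that $\sigma_{\R_+^n}(-\cA\nu)\in\{0,\infty\}$. The paper's presentation differs only cosmetically --- it removes the strictly positive coordinates one at a time and, for $S\neq\emptyset$, exhibits the explicit perturbation $\lambda\pm\nu$ with $\nu=\varepsilon(e^{(\alpha)}-e^{(\alpha')})$ on two elements of $\lambda^+$ rather than invoking continuity --- while your converse is in fact spelled out in more detail than the paper's (which simply asserts the reverse implications); one small wording slip: a ``segment'' is one-dimensional, so you mean the \emph{cone} generated by $\nu^{(1)},\nu^{(2)}$ is two-dimensional.
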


\begin{remark}
The latter condition in Theorem~\ref{th:xcircuit_nonnegative_orthant}
implies $\beta_S = (\cA \lambda^+)_S$ and, hence, 
$\beta_S \in \relint \conv((\lambda^+)_S)$, and
$\beta_{[n] \setminus S} \in \conv ((\lambda^+)_{[n] \setminus S}) -\R_+^{{[n] \setminus S}}$.
\end{remark}

\begin{proof}[Proof of Theorem~\ref{th:xcircuit_nonnegative_orthant}]
Let $\lambda \in \Lambda_X(\cA,\beta)$ 
with $|\lambda^+| \ge 2$.
Hence, $\cA \lambda \ge \mathbf{0}$.
For every $s \in [n]$ with $(\cA \lambda)_{\{s\}} > 0$, we observe that
$\lambda$ is also an $\R^{n-1}_+$-circuit for 
$\cA_{[n]\setminus \{s\}}$.
The $X$-circuit property
of $\lambda$ and $|\lambda^+| \ge 2$
imply that there exists at least one $s \in [n]$
with $(\cA \lambda)_{\{s\}} = 0$; otherwise, choosing a 
vector $\nu$
supported on a two-element subset of $\lambda^+$ with entries 
$\varepsilon$ and $-\varepsilon$ for sufficiently small 
$\varepsilon > 0$ would give a non-trivial decomposition
$\lambda = 
(\frac{1}{2} \lambda - \nu) + (\frac{1}{2} \lambda + \nu)$.

Let $S$ be the inclusion-maximal subset $S \subset [n]$ with $(\cA \lambda)_S=\mathbf{0}$.
By the initial considerations, $S \neq \emptyset$ and 
$\lambda$ is an $\R^{|S|}$-circuit of $\cA_S$. This implies
the cardinality statement
$|\{\alpha_S \, : \, \alpha \in \supp \lambda\}| = |\supp \lambda|$.
By definition of $S$, we have $(\cA \lambda)_{[n] \setminus S} > \mathbf{0}$.

Conversely, let
$\emptyset \neq S \subset [n]$ with
$|\{\alpha_S \, : \, \alpha \in \supp \lambda\}|  = |\supp \lambda|$ 
such that
$\lambda$ is an $\R^{|S|}$-circuit of $\mathcal{A}_S$
and $(\cA \lambda)_{[n] \setminus S} > \mathbf{0}$. 
Then $\lambda$ is an $\R_+^{|S|}$-circuit for $\cA_S$ and, further, 
an $X$-circuit for $\cA$.
\end{proof}

Theorem~\ref{th:xcircuit_nonnegative_orthant} can be used in the
reduction of the enumeration of all $X$-circuits to the
enumeration of all classical affine circuits.

\begin{example}
For $X=\R_+^2$ and the support set
$\mathcal{A} = \{(i,j) \ : \ 1 \le i,j \le 3\}$,
there are 65 normalized sublinear circuits. 
Namely, by Theorem~\ref{th:xcircuit_nonnegative_orthant}, there are
\begin{enumerate}
\item 27 normalized sublinear circuits of cardinality~2:
  $\lambda = - e^{(i_1,j_1)} + e^{(i_2,j_2)}$
    for $1 \le i_1 \le i_2 \le 3$, $1 \le j_1 \le j_2 \le 3$;
  that is, the entry ``$1$'' appears in ``lower right'' quadrant of the entry ``$-1$''.
\item 16 normalized sublinear circuits in which the entries $\frac{1}{2},-1,\frac{1}{2}$ 
  appear in columns 1,2,3, respectively, 
  such that the entry $-1$ appears above the line through the two entries $\frac{1}{2}$.
  \item 16 normalized sublinear circuits in which the entries $\frac{1}{2}$   appear in rows 1,2,3, respectively,
  such that the $-1$ appears left to the line containing the two entries $\frac{1}{2}$.
\item 8 $\R^2$-circuits of cardinality 4, which are the 
normalized versions of the ones from 
  Example~\ref{ex:circuits-r2}.
\end{enumerate}
Since the diagonal and the anti-diagonal are counted both in cases (2) and (3),
we have to subtract 2, which gives $27+16+16+8-2 = 65$.
The following table shows in row $i$ and column $j$ the number of sublinear
circuits with $\nu^- = \{(i,j)\}$.

\begin{center}
\begin{tabular}{|r||r|r|r|} \hline
     & 1 & 2 & 3 \\ \hline \hline
 1 & 8 & 14 & 2 \\ \hline
 2 & 14 & 21 & 2 \\ \hline
 3 & 2 & 2 & 0 \\ \hline
 \end{tabular}
 \end{center}

Exemplarily, for the case $\nu^- = \{(1,2)\}$, there are five circuits of type (1)
as well as the following nine (in the subsequent list not normalized) 
sublinear circuits $\nu$ with $\nu^- = \{(1,2)\}$, i.e., the component
with index $(1,2)$ is the negative component. As before, the upper left entry of the
matrices refer to the support point $(1,1)$:

\[
   \begin{array}{cccccc}

      \left( \begin{array}{rrr}
      1 & -2 & 1 \\
      0 &  0 & 0 \\
      0 &  0 & 0
    \end{array} \right),
 &
      \left( \begin{array}{rrr}
      1 & -2 & 0 \\
      0 &  0 & 1 \\
      0 &  0 & 0
    \end{array} \right),
 &
      \left( \begin{array}{rrr}
      1 & -2 & 0 \\
      0 &  0 & 0 \\
      0 &  0 & 1
    \end{array} \right),
 &
      \left( \begin{array}{rrr}
      0 & -2 & 1 \\
      1 &  0 & 0 \\
      0 &  0 & 0
    \end{array} \right),
 &
      \left( \begin{array}{rrr}
      0 & -2 & 0 \\
      1 &  0 & 1 \\
      0 &  0 & 0
    \end{array} \right),
 \\ [3.5ex]
       \left( \begin{array}{rrr}
      0 & -2 & 0 \\
      1 &  0 & 0 \\
      0 &  0 & 1
    \end{array} \right),
 &
      \left( \begin{array}{rrr}
      0 & -2 & 1 \\
      0 &  0 & 0 \\
      1 &  0 & 0
    \end{array} \right),
 &
      \left( \begin{array}{rrr}
      0 & -2 & 0 \\
      0 &  0 & 1 \\
      1 &  0 & 0
    \end{array} \right),
 &
      \left( \begin{array}{rrr}
      0 & -2 & 0 \\
      0 &  0 & 0 \\
      1 &  0 & 1
    \end{array} \right).
 \end{array}
 \]

\end{example}

The following theorem characterizes the connection between the $X$-circuits 
and the $\R^n$-circuits for more general polyhedral cones $X$.

\begin{theorem}
\label{th:connect-to-rn-1}
Let $X = \pos \{v^{(1)}, \ldots, v^{(k)}\}$
be an $n$-dimensional polyhedral cone spanned by the
vectors $v^{(1)}, \ldots, v^{(k)}$, where $k \ge n$.
 Then
\begin{equation}
  \label{eq:connect-to-rn-1}
  \left\{ \lambda \in \Lambda_X(\mathcal{A}) \ : \ \mathcal{A} \lambda = \mathbf{0}
   \right\}  
   \ = \ \Lambda_{\R^n}(\mathcal{A}).
\end{equation}
\end{theorem}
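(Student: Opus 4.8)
The plan is to express both sides of~\eqref{eq:connect-to-rn-1} as sets of edge generators of explicit polyhedral cones inside the spaces $N_\beta$, and then to identify the cone attached to $\R^n$ as a \emph{face} of the one attached to $X$. Fix $\beta \in \cA$. Since $X$ is a polyhedral cone, the discussion following Proposition~\ref{prop:polyhedron_x_finite_circuits} shows that the $X$-circuits contained in $N_\beta$ are exactly the edge generators of
\[
  K_X \ := \ \{\nu \in N_\beta \,:\, \sigma_X(-\cA\nu) \le 0\} \ = \ \{\nu \in N_\beta \,:\, \cA\nu \in X^*\} \ = \ N_\beta \cap \cA^{-1}(X^*),
\]
where the second equality uses that $\sigma_X(y) \le 0 \Leftrightarrow y \in X^\circ = -X^*$ for a convex cone $X$. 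The same statement applies to the polyhedral cone $\R^n$, for which $(\R^n)^* = \{\zerob\}$, so that the $\R^n$-circuits in $N_\beta$ are the edge generators of $K_{\R^n} = N_\beta \cap \ker\cA$. As normalization just fixes one representative per ray and both $\Lambda_X(\cA)$ and $\Lambda_{\R^n}(\cA)$ are taken as unions over $\beta$, it suffices to prove, for each fixed $\beta$, that the edge generators of $K_{\R^n}$ are exactly the edge generators $\lambda$ of $K_X$ with $\cA\lambda = \zerob$.

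The heart of the argument is the claim that $K_{\R^n}$ is a face of $K_X$, and this is where the hypothesis that $X$ is $n$-dimensional enters: then $X^*$ is pointed, so its lineality space $\{\zerob\}$ is the minimal face of $X^*$. Since the preimage of a face of a cone under a linear map is again a face (any conic decomposition of a preimage point maps to a conic decomposition of its image), $\cA^{-1}(\{\zerob\}) = \ker\cA$ is a face of $\cA^{-1}(X^*)$; and intersecting a face of a cone $C_1$ with a cone $C_2$ yields a face of $C_1 \cap C_2$, so $K_{\R^n} = N_\beta \cap \ker\cA$ is a face of $K_X = N_\beta \cap \cA^{-1}(X^*)$. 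Equivalently, writing $X^* = \{y : \langle w_i, y\rangle \ge 0 \text{ for } i \in I\}$ in terms of facet normals $w_i$, one has $K_X = \{\nu \in N_\beta : \langle \cA^T w_i, \nu\rangle \ge 0 \text{ for } i \in I\}$, and $K_{\R^n}$ is precisely the face of $K_X$ cut out by turning all of these inequalities into equalities (their common solution set meets $X^*$ only in $\{\zerob\}$ because $X^*$ is pointed).

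Finally I would invoke the standard fact that the edge generators of a face $F$ of a polyhedral cone $K$ are exactly the edge generators of $K$ that lie in $F$: if $F$ is a face, then $\nu = \mu_1 + \mu_2$ with $\mu_1,\mu_2 \in K$ and $\nu \in F$ forces $\mu_1,\mu_2 \in F$, which gives both inclusions at once. Applying this with $K = K_X$ and $F = K_{\R^n}$ yields the desired identification of edge-generator sets, hence~\eqref{eq:connect-to-rn-1}. I do not anticipate a genuine obstacle here; the one point demanding care is the passage to the conic description $K_X = N_\beta \cap \cA^{-1}(X^*)$ together with the face identification, and one should keep in mind that full-dimensionality of $X$ is essential --- without it $X^*$ has a nontrivial lineality space and $K_{\R^n}$ need not be a face of $K_X$.
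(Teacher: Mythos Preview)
Your proof is correct and follows essentially the same approach as the paper: both describe the $X$- and $\R^n$-circuits in $N_\beta$ as edge generators of the cones $K_X = \{\nu \in N_\beta : \cA\nu \in X^*\}$ and $K_{\R^n} = \{\nu \in N_\beta : \cA\nu = \zerob\}$, and then establish that $K_{\R^n}$ is a face of $K_X$ so that their edge generators match up on $K_{\R^n}$. The paper does this concretely via the matrix $W$ with rows $(v^{(i)})^T$ (writing $K_X = \{\nu \in N_\beta : W\cA\nu \ge \zerob\}$ and using $\ker W = \{\zerob\}$ to get $K_{\R^n} = \{\nu \in N_\beta : W\cA\nu = \zerob\}$, then checking directly that any $K_X$-decomposition of a point in $K_{\R^n}$ stays in $K_{\R^n}$), whereas you phrase the identical face property abstractly via preimages and intersections.
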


\begin{proof}Fix $\beta \in \cA$ and denote by $W$ the $k \times n$-matrix
whose rows are the transposed vectors $(v^{(1)})^T, \ldots, (v^{(k)})^T$. Hence,
$X^* = \{x \in \R^n \ : \ Wx \ge {\bf 0} \}$.  
The set $\Lambda_X(\cA,\beta)$ is the set of normalized vectors spanning the
extreme rays of the cone
\begin{align*}
   K_X & \ = \ \{ \nu \in N_{\beta} \ : \ \sigma_X(-\cA \nu) \le \mathbf{0} \} 
     \ = \ \{ \nu \in N_{\beta} \ : \ \cA \nu \in X^*\} \\
    & \ = \ \{ \nu \in N_{\beta} \, : \, W{\cA}\nu \ge {\bf 0} \}
\end{align*}
and the set $\Lambda_\R^n(\cA,\beta)$ is the set of normalized vectors spanning the
extreme rays of the cone
\begin{align*}
  K_{\R^n} & \ = \ \{ \nu \in N_{\beta} \ : \ \sigma_{\R^n}(-\cA \nu) \le \mathbf{0}\} 
   \ = \ \{ \nu \in N_{\beta} \ : \ \cA \nu = \mathbf{0} \}.
\end{align*}
Since the matrix $W$ has rank $n$, the linear mapping $x \mapsto Wx$ is 
injective, and thus its kernel is $\{\mathbf{0}\}$. Hence, 
$K_{\R^n} = \{ \nu \in N_{\beta} \ : \ W \cA \nu = \mathbf{0} \}$.
The cone $K_{\R^n}$ is contained in the cone $K_X$. As a consequence,
if $\lambda \in N_{\beta}$
is not contained in the right hand side of~\eqref{eq:connect-to-rn-1}, 
it is not contained in the left hand side.

Conversely, let $\lambda \in N_{\beta}$ be contained in the 
right hand side of~\eqref{eq:connect-to-rn-1}. Then $\cA \lambda = {\bf 0}$
and $W \cA \lambda = {\bf 0}$.
Assume there exists a decomposition into a convex combination
$\lambda = \theta_1 \lambda^{(1)} + \theta_2 \lambda^{(2)}$ 
with $W \cA \lambda^{(1)} \neq \mathbf{0}$.
Since $W \cA \lambda = {\bf 0}$ and $W \cA \lambda^{(1)} \ge {\bf 0}$, 
at least one component
of $W\cA (\lambda - \theta_1 \lambda^{(1)}) = W \cA \theta_2 \lambda^{(2)}$ is smaller 
than zero. 
This is a contradiction.
Hence, $\lambda$ is contained in the left hand side of~\eqref{eq:connect-to-rn-1}.
\end{proof}

\section{The \texorpdfstring{$n$}{n}-dimensional 
cube \texorpdfstring{$X=[-1,1]^n$}{}\label{se:cube}}
We discuss the sublinear circuits of the $n$-dimensional cube $[-1,1]^n$, which is a 
prominent case of a compact polyhedron.
Throughout the section, we assume $X=[-1,1]^n$ for some fixed $n\in\N$ and 
$\cA\subset\R^n$ non-empty and finite. 
We can already apply some of the former statements to gain knowledge of the structure of $X$-circuits. For example, as $\rec(X)^*=\R^n=-\rec(X)^*$, Lemma \ref{prop:X_circuits_edge_cases} implies that every element supported on exactly two points is an $X$-circuit. Hence, we examine the structure of those $X$-circuits
$\lambda\in \Lambda_X(\cA)$ that have more than two support points. We begin
with a necessary criterion.

\begin{lemma}
    \label{le:critminus1to1}
	Let $\lambda\in  N_\beta$ with $\lambda_\beta=-1$ for some $\beta\in\cA$ and
	$|\supp \lambda|\ge 3$. If for all $j\in[n]$ 
	\begin{align}
	\label{eq:critminus1to1}
	\left( \alpha_j\le\beta_j \text{ for all }\alpha\in\lambda^+ \right)
	\: \text{ or } \:
	\left( \alpha_j\ge\beta_j \text{ for all }\alpha\in\lambda^+ \right),
	\end{align} then $\lambda\notin\Lambda_X(\cA)$.
\end{lemma}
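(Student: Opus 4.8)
The plan is to show that the hypothesis \eqref{eq:critminus1to1} forces $\sigma_X(-\cA\lambda)$ to be attained at a vertex $v$ of the cube in a way that is ``robust'', so that $\nu\mapsto\sigma_X(-\cA\nu)$ stays linear on a full neighborhood of $\lambda$ inside $N_\beta$, and then to exhibit a genuine two-dimensional cone of linearity through $\lambda$. First I would fix, for each $j\in[n]$, a sign $\varepsilon_j\in\{-1,+1\}$ according to which alternative in \eqref{eq:critminus1to1} holds: if $\alpha_j\le\beta_j$ for all $\alpha\in\lambda^+$ put $\varepsilon_j=+1$, and if $\alpha_j\ge\beta_j$ for all $\alpha\in\lambda^+$ put $\varepsilon_j=-1$ (if both hold, the choice is free). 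Since $\lambda$ is normalized, $(\cA\lambda)_j=\sum_{\alpha\in\lambda^+}\lambda_\alpha\alpha_j-\beta_j=\sum_{\alpha\in\lambda^+}\lambda_\alpha(\alpha_j-\beta_j)$, and by the chosen sign convention $-\varepsilon_j(\cA\lambda)_j\le 0$, i.e. $\sigma_X(-\cA\lambda)=\sum_j|(\cA\lambda)_j|=\sum_j \varepsilon_j\cdot(-(\cA\lambda))_j=v^T(-\cA\lambda)$ where $v:=(\varepsilon_1,\dots,\varepsilon_n)^T$ is a vertex of $[-1,1]^n$. The point is that this identity is stable: it only used the sign pattern of the entries $\alpha_j-\beta_j$, which is shared by any $\nu\in N_\beta$ with $\supp\nu\subseteq\supp\lambda$ and $\nu_{\setminus\beta}\ge\zerob$.

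Next I would produce the two-dimensional cone of linearity. Because $|\supp\lambda|\ge 3$, there are at least two distinct points $\alpha',\alpha''\in\lambda^+$; pick a vector $\mu$ supported on $\{\alpha',\alpha''\}$ with $\mu_{\alpha'}=1,\mu_{\alpha''}=-1$ (so $\mathbf 1^T\mu=0$ and $\mu\notin\R\lambda$). For sufficiently small $t>0$ the perturbations $\lambda\pm t\mu$ still lie in $N_\beta\setminus\{\zerob\}$ and have support contained in $\supp\lambda$, hence by the stability observation above $\sigma_X(-\cA(\lambda\pm t\mu))=v^T(-\cA(\lambda\pm t\mu))$ — and more generally $\nu\mapsto\sigma_X(-\cA\nu)=v^T(-\cA\nu)$ is linear (it equals a fixed linear functional) on the relatively open two-dimensional cone $\cone\{\lambda+t\mu,\lambda-t\mu\}$, which contains $\lambda$ in its relative interior. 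This contradicts condition (2) in the definition of an $X$-circuit, so $\lambda\notin\Lambda_X(\cA)$; by scaling, this suffices even though we only treated normalized $\lambda$.

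The step I expect to require the most care is verifying that the neighborhood $\cone\{\lambda\pm t\mu\}$ genuinely lies in $N_\beta$ with support inside $\supp\lambda$: one must check that the coordinates $\lambda_{\alpha'}\pm t$ and $\lambda_{\alpha''}\mp t$ remain nonnegative (true for small $t$ since $\alpha',\alpha''\in\lambda^+$ means these coordinates are positive — note $\lambda_\alpha>0$ for $\alpha\in\supp\lambda\cap\lambda^+$), that the negative-coordinate constraint is only at $\beta$ (untouched by $\mu$), and that $\mathbf 1^T(\lambda\pm t\mu)=0$. One should also double-check the edge case where some $\alpha\in\lambda^+$ equals $\beta$ — but $\beta=\lambda^-$ has $\lambda_\beta=-1<0$, so $\beta\notin\supp\lambda\cap\lambda^+$ in the relevant sense and there is no conflict; the only subtlety is if $\lambda^+$ has exactly two elements one of which is $\beta$, but that is excluded since $|\supp\lambda|\ge 3$ forces at least two elements of $\lambda^+\setminus\{\beta\}$. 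Finally one has to confirm that $\sigma_X$ is finite throughout, which is immediate because $X$ is compact. Assembling these pieces gives the claim.
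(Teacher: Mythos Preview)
Your proof is correct and rests on the same key observation as the paper: condition~\eqref{eq:critminus1to1} forces all terms $\lambda_\alpha(\alpha_j-\beta_j)$ (for $\alpha\in\lambda^+$) to share a sign in each coordinate $j$, so $\sigma_X(-\cA\nu)=\sum_j\bigl|\sum_\alpha\nu_\alpha(\alpha_j-\beta_j)\bigr|$ coincides with the fixed linear functional $v^T(-\cA\nu)$ on all of $\{\nu\in N_\beta:\supp\nu\subseteq\supp\lambda\}$.

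The presentations differ slightly. The paper decomposes $\lambda$ as a convex combination of two-element vectors $\nu^{(\alpha)}$ (one per $\alpha\in\lambda^+$, supported on $\{\alpha,\beta\}$) and checks that $\sigma_X$ is additive over this decomposition, then appeals to local linearity. You instead name the maximizing vertex $v$ explicitly, note that $\sigma_X(-\cA\cdot)$ equals the single linear map $v^T(-\cA\cdot)$ on the whole face of $N_\beta$ cut out by $\supp\lambda$, and then exhibit one explicit two-dimensional cone through $\lambda$ via a perturbation $\mu=e^{(\alpha')}-e^{(\alpha'')}$. Your route is a bit more direct (it avoids the intermediate decomposition and makes the contradiction with condition~(2) of the $X$-circuit definition fully explicit), while the paper's decomposition into $|\lambda^+|$ many two-term pieces is closer in spirit to the note preceding the lemma about a common vertex $v$ maximizing each $x\mapsto(\beta-\alpha)^Tx$.
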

Note that the precondition expresses that there exists a vertex $v$ of $[-1,1]^n$
such that for all $\alpha \in \lambda^+$, the maximal face of the function
$x \mapsto (\beta- \alpha)^T x$
contains $v$.

\begin{proof}
    We can assume 
    $\beta\notin\relint(\conv(\lambda^+))$, since otherwise the preconditions
    imply $\beta = \alpha$ for all $\alpha\in \lambda^+$, violating 
    $|\supp \lambda|\ge 3$. Hence, we have $\cA \lambda \neq \mathbf{0}$ and
    the supremum of $x \mapsto (-\cA \nu)^T x$ is attained at some vertex
    of $[-1,1]^n$.
    
    Now assume $\lambda \in \Lambda_X(\cA)$. In order to come up with a contradiction,
    we construct a decomposition of $\lambda = \sum_{\alpha \in \lambda^+} \nu^{(\alpha)}$
    with supports $\supp\{\nu^{(\alpha)}\} = \{\alpha,\beta\}$ of cardinality~2
    by setting
    	\begin{align*}
	\theta_\alpha\nu_\alpha^{(\alpha)}:=\lambda_\alpha \; \text{ and } \; \theta_\alpha\nu_\beta^{(\alpha)}:=-\theta_\alpha\nu_\alpha^{(\alpha)}=-\lambda_\alpha\text{ for all }\alpha\in\lambda^+.
	\end{align*}
	We observe that $\nu^{(\alpha)} \in N_\beta$ for all $\alpha \in \lambda^+$ and $(\theta_\alpha)_{\alpha\in\lambda^+}$ can be chosen with the property $\sum_{\alpha \in \lambda^{+}} \theta_{\alpha} = 1$.
	Moreover, $\sum\limits_{\alpha\in\lambda^+}\theta_\alpha\nu^{(\alpha)}= \lambda$ and 
	\begin{align*}
	\sum\limits_{\alpha\in\lambda^+}\theta_\alpha\sigma_X(-\cA\nu^{(\alpha)}) \ = \ &  \sum\limits_{\alpha\in\lambda^+}\theta_\alpha\sum\limits_{j=1}^n\left|\nu_\alpha^{(\alpha)}(\alpha_j-\beta_j)\right|
	=  \sum\limits_{\alpha\in\lambda^+}\sum\limits_{j=1}^n\left|\lambda_\alpha(\alpha_j-\beta_j)\right|\\
	\ \overset{(\ref{eq:critminus1to1})}{=} & \ \sum\limits_{j=1}^n\left|\sum\limits_{\alpha\in\lambda^+} \lambda_\alpha(\alpha_j-\beta_j)\right| = \sigma_X(-\cA\lambda).
	\end{align*}
  By distinguishing the cases $\alpha_j = \beta_j$ and $\alpha_j \neq \beta_j$, it is 
  straightforward to see that this expression in terms of a convex combination is
  locally linear. Hence, $\lambda$ cannot be an $X$-circuit, which is the contradiction.
\end{proof}

We provide a slightly more general version of Lemma~\ref{le:critminus1to1}, 
whose proof is analogous.

\begin{lemma}
	Let $\lambda\in  N_\beta$ with $\lambda_\beta=-1$ for some $\beta\in\cA$, 
	and $|\supp \lambda|\ge 3$. 
	Further suppose that for $
	J(\lambda):=\{j:\beta_j=\sum_{\alpha\in\cA}\lambda_\alpha \alpha_j\}$,
	 the support can be disjointly decomposed into the two sets 
	\begin{align*}
		\cA^{(1)}=\{\alpha: \alpha_j=\beta_j \text{ for all }j\notin J(\lambda)\} \: \text{ and } \:
		\cA^{(2)}= \{\alpha:\alpha_j= \beta_j \text{ for all }j\in J(\lambda)\} \ne \emptyset
	\end{align*}
	such that for all $j\in[n]\setminus J(\lambda)$ we have
	\[
	  \left(
	   \alpha_j\le\beta_j \text{ for all }\alpha\in \cA^{(2)}
	   \right) 
	   \: \text{ or } \:
	   \left( \alpha_j\ge\beta_j \text{ for all }\alpha\in \cA^{(2)}
	   \right) .
	\] 
	Then $\lambda$ is not an $X$-circuit of $\cA$.
\end{lemma}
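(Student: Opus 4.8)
The plan is to show that $\lambda$ fails the second condition in the definition of an $X$-circuit, i.e.\ to exhibit a two-dimensional cone $C\subseteq N_\beta$ on which $\nu\mapsto\sigma_X(-\cA\nu)$ is linear and which contains $\lambda$ in its relative interior. Since $X=[-1,1]^n$, we have $\sigma_X(-\cA\nu)=\sum_{j=1}^n|(\cA\nu)_j|$, so linearity on a cone is equivalent to each coordinate of $\cA\nu$ keeping a constant sign on that cone; this is the viewpoint used in the proof of Lemma~\ref{le:critminus1to1}. A literal repetition of that argument is not available here, though, because cancellations among the points of $\cA^{(1)}$ destroy the coordinatewise additivity of $\sigma_X$. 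The remedy is to isolate a single two-point circuit attached to $\cA^{(2)}$ and to lump everything else together.

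Concretely, I would choose $\alpha_0\in\cA^{(2)}$ (which we may take with $\lambda_{\alpha_0}>0$, as $\cA^{(2)}\neq\emptyset$) and write, with the normalized two-point circuits $\nu^{(\alpha)}:=e^{(\alpha)}-e^{(\beta)}$ for $\alpha\in\lambda^+$,
\[
  \lambda \;=\; \lambda_{\alpha_0}\,\nu^{(\alpha_0)} \;+\; (1-\lambda_{\alpha_0})\,\mu,
  \qquad
  \mu \;:=\; \frac{1}{1-\lambda_{\alpha_0}} \sum_{\alpha\in\lambda^+,\ \alpha\neq\alpha_0} \lambda_\alpha\,\nu^{(\alpha)} .
\]
As in the proof of Lemma~\ref{le:critminus1to1}, $\mu$ is a well-defined normalized element of $N_\beta$ (using $\sum_{\alpha\neq\beta}\lambda_\alpha=1$), it is nonzero, and since $|\supp\lambda|\geq 3$ there is a support point other than $\alpha_0$ and $\beta$, so $\mu$ and $\nu^{(\alpha_0)}$ have different supports and hence are non-proportional. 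Thus $C:=\pos\{\nu^{(\alpha_0)},\mu\}$ is a genuinely two-dimensional cone contained in $N_\beta$, and $\lambda\in\relint C$ because $\lambda_{\alpha_0}\in(0,1)$.

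The heart of the argument is to check linearity of $\nu\mapsto\sigma_X(-\cA\nu)$ on $C$; equivalently, for $\nu=s\,\nu^{(\alpha_0)}+t\,\mu$ with $s,t\geq 0$ and every $j\in[n]$, that the numbers $(-\cA\nu^{(\alpha_0)})_j=\beta_j-(\alpha_0)_j$ and $(-\cA\mu)_j$ are both $\geq0$ or both $\leq0$, so that $|(-\cA\nu)_j|=s\,|(-\cA\nu^{(\alpha_0)})_j|+t\,|(-\cA\mu)_j|$. I would split on membership in $J(\lambda)$: if $j\in J(\lambda)$, then $\alpha_0\in\cA^{(2)}$ forces $(\alpha_0)_j=\beta_j$, hence $(-\cA\nu^{(\alpha_0)})_j=0$ and there is nothing to check; if $j\notin J(\lambda)$, then every $\alpha\in\cA^{(1)}$ satisfies $\alpha_j=\beta_j$, so in $(-\cA\mu)_j=\tfrac{1}{1-\lambda_{\alpha_0}}\sum_{\alpha\neq\alpha_0}\lambda_\alpha(\beta_j-\alpha_j)$ only the $\alpha\in\cA^{(2)}$ contribute, and by the sign hypothesis all numbers $\beta_j-\alpha_j$ with $\alpha\in\cA^{(2)}$ — in particular $\beta_j-(\alpha_0)_j$ — share one common sign. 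Summing over $j$ yields $\sigma_X(-\cA\nu)=s\,\sigma_X(-\cA\nu^{(\alpha_0)})+t\,\sigma_X(-\cA\mu)$ on all of $C$. Together with $\lambda\in\relint C\subseteq N_\beta$, this contradicts the second defining property of an $X$-circuit, so $\lambda$ is not an $X$-circuit of $\cA$.

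I expect the main obstacle to be purely organizational: cleanly separating the two coordinate classes $j\in J(\lambda)$ and $j\notin J(\lambda)$ and invoking the decomposition $\supp\lambda=\cA^{(1)}\sqcup\cA^{(2)}$ together with the sign hypothesis at exactly the right points, while also checking that the construction degrades gracefully — when $\cA^{(1)}=\{\beta\}$ it specializes to Lemma~\ref{le:critminus1to1}, and when $\cA^{(2)}=\{\alpha_0\}$ one simply has $(-\cA\mu)_j=0$ for all $j\notin J(\lambda)$, so no subcase needs separate treatment. It is worth noting that the argument never uses the precise description of the set $J(\lambda)$: only the two containment properties of $\cA^{(1)}$ and $\cA^{(2)}$ relative to $J(\lambda)$ enter, which is exactly why replacing the full two-point decomposition of Lemma~\ref{le:critminus1to1} by the one-versus-rest split above is the right move.
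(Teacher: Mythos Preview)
Your proof is correct and realizes exactly the analogy the paper gestures at (the paper gives no explicit argument, saying only that the proof ``is analogous'' to that of Lemma~\ref{le:critminus1to1}). You also correctly identify the one point where the literal Lemma~\ref{le:critminus1to1} argument---the full decomposition $\lambda=\sum_{\alpha\in\lambda^+}\lambda_\alpha\nu^{(\alpha)}$---would break down: for coordinates $j\in J(\lambda)$ the terms $\beta_j-\alpha_j$ with $\alpha\in\cA^{(1)}$ need not share a sign, so additivity of $|\cdot|$ fails. Your one-versus-rest split, peeling off a single $\nu^{(\alpha_0)}$ with $\alpha_0\in\cA^{(2)}$ and lumping everything else into $\mu$, is precisely the right fix, since then for $j\in J(\lambda)$ the $\nu^{(\alpha_0)}$-contribution vanishes and no sign condition on $\mu$ is needed, while for $j\notin J(\lambda)$ only $\cA^{(2)}$-terms survive in both pieces and the sign hypothesis applies. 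This is arguably cleaner than the more direct analogue (splitting off \emph{all} of $\cA^{(2)}\cap\lambda^+$ as two-point circuits and keeping one $\cA^{(1)}$-lump), since it produces a genuinely two-dimensional cone, matching condition~(2) in the definition of an $X$-circuit on the nose.

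One small point worth making explicit: your choice of $\alpha_0\in\cA^{(2)}$ with $\lambda_{\alpha_0}>0$ tacitly uses $\cA^{(2)}\cap\lambda^+\neq\emptyset$, i.e.\ that $\beta$ belongs to $\cA^{(1)}$ in the disjoint decomposition; this is the intended reading of the hypothesis (and is forced once one notes that $\beta$ formally satisfies both defining conditions, so disjointness pins it to one side).
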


\begin{example}\emph{The planar case $[-1,1]^2$.} For the case of the planar
square $X=[-1,1]^2$ we provide some explicit descriptions of the sublinear 
circuits for support sets located on a grid $\{(i,j) \, : \, 1 \le i, j \le k\}$
for some $k \in \N$.

If $\lambda$ is a normalized $[-1,1]^2$-circuit, then, due to
$\rec([-1,1]^2)^*=-\rec([-1,1]^2)^*=\R^2$, there is no restriction on the location
of the negative coordinate. However, using Theorem~\ref{th:convex_hull_lambda}, we can exclude potential sublinear circuits $\lambda\in N_\beta$ for some $\beta\in\cA$, where $\relint\conv(\supp \lambda)\cap\lambda^+\ne\emptyset$ and those where $\beta\in\conv(\lambda^+)$ but $\cA\lambda\ne \mathbf{0}$; in particular, the latter situation
excludes 
the case $\beta\in\conv(\lambda^+)\setminus\relint\conv(\lambda^+)$. 
Moreover, using Lemma \ref{le:critminus1to1}, we can exclude all those potential $[-1,1]^2$-circuits where $|\supp \lambda|\ge 3 $ and $
	\left( \alpha_j\le\beta_j \text{ for all }\alpha\in\lambda^+ \right)$
or 
	$\left( \alpha_j\ge\beta_j \text{ for all }\alpha\in\lambda^+ \right)$.

For the case $k=3$, i.e., the support set
$\mathcal{A} = \{ (i,j) \, : \, 1 \le i, j \le 3\}$, the structural statements 
facilitate
to obtain the exact set of sublinear circuits. Up to multiples, 
there are 132 $X$-circuits:
\begin{enumerate}
\item 72 sublinear circuits supported on two elements: $e^{(i_1,j_1)} - e^{(i_2,j_2)}$
  for $1 \le i_1,i_2,j_1,j_2 \le 3$ with $(i_1,j_1) \neq (i_2,j_2)$.
\item 27 sublinear circuits in which the entries $\frac{1}{2},-1,\frac{1}{2}$
  appear in columns 1,2,3, respectively.
\item 27 sublinear circuits in which the entries $\frac{1}{2},-1,\frac{1}{2}$
  appear in rows 1,2,3, respectively.
\item 8 sublinear circuits supported on 4 elements.
\end{enumerate}
Since the diagonal and the anti-diagonal are counted both in cases 2 and 3, this
gives 72+27+27+8-2 = 132 sublinear circuits.
The following table shows in row $i$ and column $j$ the number of normalized sublinear
circuits $\lambda$ with $\lambda^- = \{(i,j)\}$.

\begin{center}
\begin{tabular}{|r||r|r|r|} \hline
     & 1 & 2 & 3 \\ \hline \hline
 1 & 8 & 17 & 8 \\ \hline
 2 & 17 & 32 & 17 \\ \hline
 3 & 8 & 17 & 8 \\ \hline
 \end{tabular}
 \end{center}

The subsequent list gives the 17
(not necessarily normalized)
$X$-circuits $\nu$ with $\nu^{-} = \{(1,2)\}$, i.e., the component with index
$(1,2)$ is the negative component. As before, the upper left entry of the 
matrices refer to the support point $(1,1)$:
\[
   \begin{array}{cccccc}
    \left( \begin{array}{rrr}
      1 & -1 & 0 \\
     0 &  0 & 0 \\
      0 &  0 & 0
    \end{array} \right),
   &
      \left( \begin{array}{rrr}
      0 & -1 & 0 \\
      1 &  0 & 0 \\
      0 &  0 & 0
    \end{array} \right),
 &
      \left( \begin{array}{rrr}
      0 & -1 & 0 \\
      0 &  0 & 0 \\
      1 &  0 & 0
    \end{array} \right),
    &
      \left( \begin{array}{rrr}
      0 & -1 & 0 \\
      0 &  1 & 0 \\
      0 &  0 & 0
    \end{array} \right) ,
 &
      \left( \begin{array}{rrr}
      0 & -1 & 0 \\
      0 &  0 & 0 \\
      0 &  1 & 0
    \end{array} \right), \\ [3.5ex]
      \left( \begin{array}{rrr}
      0 & -1 & 1 \\
      0 &  0 & 0 \\
      0 &  0 & 0
    \end{array} \right),
 &
      \left( \begin{array}{rrr}
      0 & -1 & 0 \\
      0 &  0 & 1 \\
      0 &  0 & 0
    \end{array} \right),
 &
      \left( \begin{array}{rrr}
      0 & -1 & 0 \\
      0 &  0 & 0 \\
      0 &  0 & 1
    \end{array} \right) ,
    &
      \left( \begin{array}{rrr}
      1 & -2 & 1 \\
      0 &  0 & 0 \\
      0 &  0 & 0
    \end{array} \right), 
    &
        \left( \begin{array}{rrr}
      0 & -2 & 1 \\
      1 &  0 & 0 \\
      0 &  0 & 0
    \end{array} \right), 
    \\ [3.5ex]
        \left( \begin{array}{rrr}
      0 & -2 & 1 \\
      0 &  0 & 0 \\
      1 &  0 & 0
    \end{array} \right),
     &
      \left( \begin{array}{rrr}
      1 & -2 & 0 \\
      0 &  0 & 1 \\
      0 &  0 & 0
    \end{array} \right), 
    &
        \left( \begin{array}{rrr}
      0 & -2 & 0 \\
      1 &  0 & 1 \\
      0 &  0 & 0
    \end{array} \right) , 
    &
        \left( \begin{array}{rrr}
      0 & -2 & 0 \\
      0 &  0 & 1 \\
      1 &  0 & 0
    \end{array} \right), 
    &
      \left( \begin{array}{rrr}
      1 & -2 & 0 \\
      0 &  0 & 0 \\
      0 &  0 & 1
    \end{array} \right) ,
    \\ [3.5ex]
        \left( \begin{array}{rrr}
      0 & -2 & 0 \\
      1 &  0 & 0 \\
      0 &  0 & 1
    \end{array} \right), 
    &
        \left( \begin{array}{rrr}
      0 & -2 & 0 \\
      0 &  0 & 0 \\
      1 &  0 & 1
    \end{array} \right).
  \end{array}
\]

\noindent
\emph{The case $k=4$.} In the case $\mathcal{A} = \{ (i,j) \, : \, 1 \le i, j \le 4\}$, a computer calculation shows that
there are 980 normalized $X$-circuits, which come in the following classes with regard to
$\lambda^-$:
\begin{center}
\begin{tabular}{|r||r|r|r|r|} \hline
     & 1 & 2 & 3 & 4 \\ \hline \hline
 1 & 15 & 47 & 47 & 15 \\ \hline
 2 & 47 & 136 & 136 & 47 \\ \hline
 3 & 47 & 136 & 136 & 47 \\ \hline
 4 & 15 & 47 & 47 & 15 \\ \hline
 \end{tabular}
 \end{center}

Note that in this case, the criteria of this and the previous section are not sufficient to determine the set of sublinear circuits solely from these criteria.
\end{example}

\section{Reducibility and extremality\label{se:reducibility}}

By Proposition~\ref{prop:decomp2},
the reduced sublinear circuits 
provide an irredundant decomposition of conditional SAGE cones.
In this section, we discuss some criteria and key examples for reduced 
sublinear circuits. As an application of the criteria, we will determine
the extremals of the $[-1,1]$-SAGE cone in 
Theorem \ref{extreme_ray_one_dim_polytope}.

For the classical case of affine circuits supported on a finite set $\mathcal{A}$, 
the following exact characterization in terms of the support is known.

\begin{proposition}\emph{(\cite[Cor.\ 4.7]{knt-2020}, \cite[Thm.~3.2]{FdW-2019})}
\label{pr:combinatorial-rn2}
A vector $\nu$ is a reduced $\R^n$-circuit if and only if
\[
  \mathcal{A} \cap \relint \conv \nu^+ = \{ \nu^-\}.
\]
\end{proposition}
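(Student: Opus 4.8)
The plan is to reformulate ``reduced'' as extremality in a cone of circuits, prove the two implications separately, and then isolate the combinatorial core. For an $\R^n$-circuit $\nu$, condition~(1) in the definition of an $X$-circuit forces $\cA\nu=\zerob$ (as noted in the Introduction), so $\sigma_{\R^n}(-\cA\nu)=0$ and every extended form equals $(\nu,0)$; hence $G_{\R^n}(\cA)=\cone\bigl(\{(\nu,0):\nu\in\Lambda_{\R^n}(\cA)\}\cup\{(\zerob,1)\}\bigr)=\mathcal{C}\times\R_{\ge0}$, where $\mathcal{C}:=\cone(\Lambda_{\R^n}(\cA))$ is the cone generated by all $\R^n$-circuits. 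The extreme rays of $\mathcal{C}\times\R_{\ge0}$ are $\{\zerob\}\times\R_{\ge0}$ together with the extreme rays of $\mathcal{C}$, so $\nu$ is a reduced $\R^n$-circuit if and only if $\nu$ spans an extreme ray of $\mathcal{C}$. Here $\mathcal{C}$ is pointed --- a nonzero vector and its negative cannot both lie in $\bigcup_\beta N_\beta$ --- hence equals the conic hull of its extreme rays. Finally, for an $\R^n$-circuit $\nu$ the set $\nu^+$ is affinely independent, $\conv\nu^+$ is a simplex, and $\beta:=\nu^-$ lies in $\relint\conv\nu^+$; so $\beta\in\cA\cap\relint\conv\nu^+$ always, and it remains to show that $\beta$ is the only point there precisely when $\nu$ is extreme in $\mathcal{C}$.

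For ``reduced $\Rightarrow$ support condition'' I would argue contrapositively. Suppose $\bar\alpha\in(\cA\cap\relint\conv\nu^+)\setminus\{\beta\}$; since $\nu^+$ consists of the vertices of the simplex, $\bar\alpha\notin\nu^+$, so $\bar\alpha\notin\supp\nu$. Affine independence of $\nu^+$ together with $\bar\alpha\in\relint\conv\nu^+$ gives an $\R^n$-circuit $\mu$ with $\mu^-=\bar\alpha$ and $\mu^+=\nu^+$. Separately, the ray from $\bar\alpha$ through $\beta$ leaves $\conv\nu^+$ at a point $q$ in the relative interior of a proper face $\conv(S)$, $S\subsetneq\nu^+$; this expresses $\beta$ as a convex combination of $\bar\alpha$ (with positive weight) and points of $S$, giving $\rho\in N_\beta$ with $\cA\rho=\zerob$, $\rho_{\bar\alpha}>0$ and $\supp\rho\subseteq\{\beta,\bar\alpha\}\cup S$, hence $\rho\in\mathcal{C}$. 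A positive combination $c\mu+d\rho$ then cancels the $\bar\alpha$-entry, lies in $N_\beta$, is annihilated by $\cA$, and is supported in $\nu^+\cup\{\beta\}$; as the $\cA$-dependence supported there with negative index $\beta$ is unique up to a positive factor, $c\mu+d\rho$ is a positive multiple of $\nu$. Since $\mu\in\mathcal{C}$ is not proportional to $\nu$, this shows $\nu$ does not span an extreme ray of $\mathcal{C}$.

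For ``support condition $\Rightarrow$ reduced'', assume $\cA\cap\relint\conv\nu^+=\{\beta\}$ and write $\nu=\sum_{i=1}^{k}c_i\mu^{(i)}$ with $c_i>0$ and the $\mu^{(i)}$ being $\R^n$-circuits; the goal is that each $\mu^{(i)}$ is a positive multiple of $\nu$. First I would show $\conv\bigl(\bigcup_i\supp\mu^{(i)}\bigr)=\conv\nu^+$. The inclusion ``$\supseteq$'' holds since $c_i>0$ forbids cancellation of positive entries. For ``$\subseteq$'', fix a linear functional $\ell$ and let $y^\ast$ minimize it over $\bigcup_i\supp\mu^{(i)}$; if $\ell(y^\ast)<\min_{\supp\nu}\ell$ then $y^\ast\notin\supp\nu$, so $\sum_ic_i\mu^{(i)}_{y^\ast}=0$, while $y^\ast$ cannot be the negative index of any $\mu^{(i)}$ (that index lies in the relative interior of the convex hull of the positive support and is thus never $\ell$-minimal there), so all $\mu^{(i)}_{y^\ast}\ge0$, forcing them to vanish --- a contradiction. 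Hence the two compact convex sets have equal support functions and coincide, so $\supp\mu^{(i)}\subseteq\cA\cap\conv\nu^+$ for every $i$. Using the hypothesis to classify $\cA\cap\conv\nu^+$ (the vertices $\nu^+$, the interior point $\beta$, and possibly points on proper faces), I would argue by induction on the dimension of the faces of $\conv\nu^+$ --- restricting $\nu=\sum_ic_i\mu^{(i)}$ to the coordinates indexed by $\cA$ inside a face, and using that a circuit meeting a face of $\conv\nu^+$ either lies in it entirely or touches it only through its positive support --- that no $\mu^{(i)}$ uses a point on a proper face, i.e.\ $\supp\mu^{(i)}\subseteq\nu^+\cup\{\beta\}$ for all $i$. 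Then $\sum_ic_i\mu^{(i)}_\beta=\nu_\beta<0$ forces some $\mu^{(i_0)}$ to have negative index $\beta$; uniqueness of the $\cA$-dependence supported in $\nu^+\cup\{\beta\}$ makes $\mu^{(i_0)}$ a positive multiple of $\nu$, and subtracting it and inducting on $k$ (pointedness of $\mathcal{C}$ keeps the remaining coefficients nonnegative) completes the argument.

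I expect the main obstacle to be the face-induction in the last paragraph: the hypothesis controls $\cA\cap\relint\conv\nu^+$ but says nothing directly about $\cA$ on the proper faces of $\conv\nu^+$, so ruling out circuits $\mu^{(i)}$ supported on those faces is where the argument is genuinely delicate and where, as in the cited references, one works under the standard genericity assumptions on $\cA$ (equivalently, through the reducibility machinery that records the facial structure). The reformulation, the support-function identity $\conv(\bigcup_i\supp\mu^{(i)})=\conv\nu^+$, and the first implication are comparatively routine.
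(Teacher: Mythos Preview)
The paper does not prove this proposition; it is quoted from \cite{knt-2020} and \cite{FdW-2019} without argument, so there is no proof in the paper to compare against.

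Your reformulation (reduced $\Leftrightarrow$ extreme in $\mathcal{C}=\cone\Lambda_{\R^n}(\cA)$) and your argument for ``reduced $\Rightarrow$ support condition'' are correct and cleanly executed.

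Your hesitation about the converse is more than justified: as literally written here, that direction is false, and no face-induction can rescue it. Take $\cA=\{(0,0),(4,0),(0,4),(2,0),(1,1)\}\subset\R^2$ and the normalized $\R^2$-circuit $\nu=\tfrac12 e^{(0,0)}+\tfrac14 e^{(4,0)}+\tfrac14 e^{(0,4)}-e^{(1,1)}$. Since $(2,0)$ lies on the edge $[(0,0),(4,0)]$ one has $\cA\cap\relint\conv\nu^+=\{(1,1)\}=\{\nu^-\}$; yet with the $\R^2$-circuits $\mu^{(1)}=\tfrac12 e^{(0,0)}+\tfrac12 e^{(4,0)}-e^{(2,0)}$ and $\mu^{(2)}=\tfrac14 e^{(0,0)}+\tfrac12 e^{(2,0)}+\tfrac14 e^{(0,4)}-e^{(1,1)}$ one checks $\nu=\tfrac12\mu^{(1)}+\mu^{(2)}$, so $\nu$ is not extreme in $\mathcal{C}$ and hence not reduced. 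The paper's own illustration immediately following the proposition already shows the same phenomenon: for the left matrix in~\eqref{eq:r2-supp4}, declared non-reduced, the point $(1,2)$ lies on a proper face of $\conv\nu^+$, so $\cA\cap\relint\conv\nu^+=\{\nu^-\}$ holds even though $\nu$ is not reduced.

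The characterization that actually holds---and that the paper effectively relies on in Theorem~\ref{th:combinatorial-reducibility1} and Corollary~\ref{cor:convex_hull_empty}---uses the \emph{closed} simplex: $\nu$ is a reduced $\R^n$-circuit if and only if $\cA\cap\conv(\supp\nu)=\supp\nu$. Under that hypothesis your converse argument becomes short: your identity $\conv\bigl(\bigcup_i\supp\mu^{(i)}\bigr)=\conv\nu^+$ yields $\supp\mu^{(i)}\subseteq\cA\cap\conv\nu^+=\supp\nu$ directly, and uniqueness of the affine dependence supported on $\supp\nu$ forces each $\mu^{(i)}$ to be a positive multiple of $\nu$, with no face induction needed.
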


For example, with regard to the two matrices in~\eqref{eq:r2-supp4}
of Example \ref{ex:circuits-r2}, the left one is not reduced, but the right one
is.
The following theorem gives a generalization for the necessary direction of
Proposition~\ref{pr:combinatorial-rn2} to the constrained situation,
where $X$ is a non-empty, convex set in $\R^n$.

\begin{theorem}\label{th:combinatorial-reducibility1}
	Let $\lambda \in \Lambda_X(\cA,\beta)$.
	If there exists $\beta'\in\cA\setminus \supp \lambda$ and some normalized  $\lambda'\in N_{\beta'}$ where $(\lambda')^+ \subset \supp(\lambda)$ 
and $\cA\lambda' = \gamma \cA\lambda$ for some $\gamma\ge 0$,
then $\lambda$ is not reduced.
\end{theorem}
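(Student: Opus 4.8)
The plan is to show that $(\lambda,\sigma_X(-\cA\lambda))$ does not generate an extreme ray of the (pointed) cone $G_X(\cA)$; since $\lambda\in\Lambda_X(\cA)$, this extended form lies in $G_X(\cA)$, and I would exhibit it as a sum of two nonzero elements of $G_X(\cA)$ of which the first is not a nonnegative multiple of it. Write $s:=\sigma_X(-\cA\lambda)<\infty$. Two preliminary observations. First, since $\beta\in\supp\lambda$ while $\beta'\notin\supp\lambda$, we have $\beta\neq\beta'$, so the unique negative component of $\lambda'$ sits at $\beta'$; together with the hypothesis $(\lambda')^+\subset\supp\lambda$ this gives $\supp\lambda'\subseteq\supp\lambda\cup\{\beta'\}$, and comparing $\beta'$-components shows that $\lambda$ and $\lambda'$ are not proportional. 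Second, from $\cA\lambda'=\gamma\cA\lambda$ with $\gamma\ge0$ and the positive homogeneity of $\sigma_X$ we get $\sigma_X(-\cA\lambda')=\gamma\,s<\infty$.

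Next I would perturb $\lambda$ along $-\lambda'$: for $t>0$ put $\lambda^{(t)}:=\lambda-t\lambda'$. For all sufficiently small $t>0$ (and, when $\gamma>0$, additionally $t<1/\gamma$) one has $\lambda^{(t)}\in N_\beta\setminus\{\mathbf0\}$. Indeed $\mathbf1^T\lambda^{(t)}=0$; the $\beta'$-component of $\lambda^{(t)}$ equals $t>0$; the $\beta$-component equals $-1-t\lambda'_\beta\le-1$, using $\lambda'_\beta\ge0$; for $\alpha\in\supp\lambda\setminus\{\beta\}$ we have $\lambda_\alpha>0$ and $\lambda'_\alpha\ge0$, hence $\lambda^{(t)}_\alpha>0$ for $t$ small; and $\lambda^{(t)}_\alpha=0$ outside $\supp\lambda\cup\{\beta'\}$ because $\lambda'$ vanishes there. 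Further $\cA\lambda^{(t)}=(1-t\gamma)\cA\lambda$, so $\sigma_X(-\cA\lambda^{(t)})=(1-t\gamma)s<\infty$.

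The key input is a structural property of the circuit graph from \cite{mnt-2020}: for every $\beta''\in\cA$, every $\nu\in N_{\beta''}$ with $\sigma_X(-\cA\nu)<\infty$ has its extended form $(\nu,\sigma_X(-\cA\nu))$ in $G_X(\cA)$. (For polyhedral $X$ this is immediate from Proposition~\ref{prop:polyhedron_x_finite_circuits}: the $X$-circuits with negative index $\beta''$ are exactly the rays of the normal fan of $-\cA^TX+N_{\beta''}^\circ$, the map $\nu\mapsto\sigma_X(-\cA\nu)$ is linear on each of its maximal cones, and $\nu$ is a nonnegative combination of the generators of the maximal cone containing it.) Applying this to $\lambda^{(t)}\in N_\beta$ and to $\lambda'\in N_{\beta'}$ gives $(\lambda^{(t)},(1-t\gamma)s)\in G_X(\cA)$ and $t(\lambda',\gamma s)\in G_X(\cA)$, and
\[
  (\lambda,s)\;=\;(\lambda^{(t)},(1-t\gamma)s)\;+\;t\,(\lambda',\gamma s)
\]
writes $(\lambda,s)$ as a sum of two nonzero members of $G_X(\cA)$. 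If the first summand were a nonnegative multiple $c(\lambda,s)$ of $(\lambda,s)$, then $\lambda-t\lambda'=c\lambda$ would force $\lambda'$ to be proportional to $\lambda$ (or zero), contradicting the first preliminary observation. Since $G_X(\cA)$ is pointed, $(\lambda,s)$ therefore does not generate an extreme ray, i.e.\ $\lambda$ is not reduced.

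The step I expect to be the main obstacle is the quoted structural fact — that the extended form of an \emph{arbitrary} element of some $N_{\beta''}$ with finite support function already lies in $G_X(\cA)$, not just that of an $X$-circuit; every other step is a routine verification. If one wants to avoid invoking it, one can instead write out the circuit decompositions of $\lambda^{(t)}$ and of $\lambda'$ inside the (polyhedral) linearity cone containing each, on which $\nu\mapsto\sigma_X(-\cA\nu)$ is linear, and assemble $(\lambda,s)$ explicitly as a nonnegative combination of extended forms of $X$-circuits, observing that at least one circuit used arises from $\lambda'$ and hence, having a support point in $\cA\setminus\supp\lambda$, is not proportional to $\lambda$.
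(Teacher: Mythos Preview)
Your proof is correct and follows essentially the same approach as the paper: perturb $\lambda$ by a multiple of $\lambda'$ and use $\cA\lambda'=\gamma\cA\lambda$ to split $(\lambda,\sigma_X(-\cA\lambda))$ additively inside $G_X(\cA)$ into two non-proportional pieces. Your version is in fact slightly more direct---the paper constructs a second perturbed vector $\nu^{(2)}=\lambda'-\tau'\lambda\in N_{\beta'}$ whereas you simply use $\lambda'$ itself---and you are right to flag the structural fact (that the extended form of any $\nu\in N_{\beta''}$ with $\sigma_X(-\cA\nu)<\infty$ lies in $G_X(\cA)$) as the key input, since the paper's proof relies on it as well, implicitly.
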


Before providing the proof within this section, we 
discuss its consequences.

\begin{corollary}\label{cor:convex_hull_empty}
	Let $\lambda \in \Lambda_X(\cA,\beta)$. 
	If $(\conv(\supp \lambda)\cap\cA)\setminus \supp \lambda \ne\emptyset$,
	then $\lambda$ is not reduced. Consequently,
	\begin{align*}
		\{\lambda\in\Lambda_X^\star(\cA):\cA\lambda=\mathbf{0}\}\subset \Lambda_{\R^n}^\star(\cA).
	\end{align*}
\end{corollary}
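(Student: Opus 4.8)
The statement splits into two parts; I would derive the first from Theorem~\ref{th:combinatorial-reducibility1}, and obtain the second as a consequence of the first together with Proposition~\ref{pr:combinatorial-rn2}.

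\emph{Non-reducedness.} Fix a point $\beta'\in(\conv(\supp\lambda)\cap\cA)\setminus\supp\lambda$; note $\beta'\neq\beta$ since $\beta\in\supp\lambda$. As $\beta'\in\conv(\supp\lambda)$, write $\beta'=\sum_{\alpha\in\supp\lambda}\mu_\alpha\alpha$ with $\mu_\alpha\ge 0$ and $\sum_{\alpha}\mu_\alpha=1$, and set $\lambda'\in\R^{\cA}$ by $\lambda'_{\beta'}:=-1$, $\lambda'_\alpha:=\mu_\alpha$ for $\alpha\in\supp\lambda$, and $\lambda'_\alpha:=0$ otherwise; this is well defined because $\beta'\notin\supp\lambda$. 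Then $\mathbf{1}^T\lambda'=0$, all entries of $\lambda'$ other than $\lambda'_{\beta'}$ are nonnegative, and $(\lambda')^+\subseteq\supp\lambda$, so $\lambda'\in N_{\beta'}$ is normalized; moreover $\cA\lambda'=\sum_{\alpha\in\supp\lambda}\mu_\alpha\alpha-\beta'=\mathbf{0}=0\cdot\cA\lambda$. Hence Theorem~\ref{th:combinatorial-reducibility1}, applied with $\gamma=0$, shows that $\lambda$ is not reduced.

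\emph{The inclusion.} Let $\lambda\in\Lambda_X^\star(\cA)$ with $\cA\lambda=\mathbf{0}$, and put $\beta:=\lambda^-$, so $\lambda\in N_\beta$ is normalized and reduced. First I would observe that $\lambda$ is in fact a normalized $\R^n$-circuit: condition~(1) of the definition holds since $\sigma_{\R^n}(-\cA\lambda)=\sigma_{\R^n}(\mathbf{0})=0$, and for condition~(2), any two-dimensional cone in $N_\beta$ on which $\nu\mapsto\sigma_{\R^n}(-\cA\nu)$ is linear must be contained in $\{\nu:\cA\nu=\mathbf{0}\}$ (elsewhere the value is $\infty$), and on that set $\nu\mapsto\sigma_X(-\cA\nu)\equiv\sigma_X(\mathbf{0})=0$ is linear as well, so the $X$-circuit $\lambda$ cannot lie in its relative interior. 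Next I would fix the support geometry. From $\cA\lambda=\mathbf{0}$, $\lambda_\beta=-1$ and $\sum_{\alpha\neq\beta}\lambda_\alpha=1$ one gets $\beta=\sum_{\alpha\in\lambda^+}\lambda_\alpha\alpha$, a convex combination of the points of $\lambda^+$ with strictly positive weights; hence $\beta\in\relint\conv(\lambda^+)$, and therefore $\conv(\supp\lambda)=\conv(\lambda^+)$ and $\relint\conv(\supp\lambda)=\relint\conv(\lambda^+)$. By Theorem~\ref{th:convex_hull_lambda}, $\relint\conv(\supp\lambda)\cap\lambda^+=\emptyset$; and since $\lambda$ is reduced, the first part, read contrapositively, gives $\conv(\supp\lambda)\cap\cA\subseteq\supp\lambda$. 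Combining these, every $\alpha'\in\cA\cap\relint\conv(\lambda^+)=\cA\cap\relint\conv(\supp\lambda)$ lies in $\supp\lambda=\lambda^+\cup\{\beta\}$ but not in $\lambda^+$, so $\alpha'=\beta$; together with $\beta\in\relint\conv(\lambda^+)$ this yields $\cA\cap\relint\conv(\lambda^+)=\{\beta\}=\{\lambda^-\}$. By Proposition~\ref{pr:combinatorial-rn2}, the $\R^n$-circuit $\lambda$ is then reduced, i.e.\ $\lambda\in\Lambda_{\R^n}^\star(\cA)$.

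The main point to watch is the interplay between $\conv(\supp\lambda)$ and $\conv(\lambda^+)$: the facts that a convex combination with all-positive weights lies in the relative interior (so $\beta\in\relint\conv(\lambda^+)$) and that this collapses $\conv(\supp\lambda)$ onto $\conv(\lambda^+)$ are exactly what allows Theorem~\ref{th:convex_hull_lambda} and the first part of the corollary to be merged into the single interior-point condition required by Proposition~\ref{pr:combinatorial-rn2}. Confirming beforehand that $\lambda$ genuinely is an $\R^n$-circuit is a minor but worthwhile step, so that the cited characterization is applied to an object of the intended kind.
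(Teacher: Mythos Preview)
Your proof is correct and follows the same approach as the paper: applying Theorem~\ref{th:combinatorial-reducibility1} with $\gamma=0$ for the first claim and Proposition~\ref{pr:combinatorial-rn2} for the inclusion. Your argument for the second part is considerably more detailed than the paper's one-sentence sketch---you explicitly verify that $\lambda$ is an $\R^n$-circuit and invoke Theorem~\ref{th:convex_hull_lambda} to rule out points of $\lambda^+$ from the relative interior, steps the paper leaves implicit.
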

\begin{proof}
	The first statement follows by applying Theorem \ref{th:combinatorial-reducibility1} with $\beta'\in\conv(\supp \lambda)\setminus\supp\lambda$, $(\lambda')^+$ are the vertices of  $\supp \lambda$
and $\gamma=0$. The second one is a direct consequence of Proposition \ref{pr:combinatorial-rn2} and the fact that for $X=\R^n$ all $X$-circuits $\lambda$ have the property $\cA\lambda=\mathbf{0}$.
\end{proof}

Using this corollary, we can provide an analogon to Theorem~\ref{th:connect-to-rn-1}.

\begin{theorem}
	\label{th:connect-reduced-to-rn-1}
	Let $X = \pos \{v^{(1)}, \ldots, v^{(k)}\}$
	be an $n$-dimensional polyhedral cone spanned by the
	vectors $v^{(1)}, \ldots, v^{(k)}$, where $k \ge n$.
	Then
	\begin{equation}
	\label{eq:connect-reduced-to-rn-1}
	\left\{ \lambda \in \Lambda_X^\star(\mathcal{A}) \ : \ \mathcal{A} \lambda = 0 \right\}  
	\ = \ \Lambda^\star_{\R^n}(\mathcal{A}).
	\end{equation}
\end{theorem}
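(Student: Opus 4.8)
The plan is to obtain one inclusion essentially for free from Corollary~\ref{cor:convex_hull_empty}, which already records $\{\lambda\in\Lambda_X^\star(\cA):\cA\lambda=\zerob\}\subseteq\Lambda_{\R^n}^\star(\cA)$, and to prove the reverse inclusion by exhibiting $G_{\R^n}(\cA)$ as a \emph{face} of $G_X(\cA)$. Once this is in place, the standard fact that the extreme rays of a cone which lie in a given face are precisely the extreme rays of that face immediately upgrades ``normalized reduced $\R^n$-circuit'' to ``normalized reduced $X$-circuit'', and Theorem~\ref{th:connect-to-rn-1} supplies $\cA\lambda=\zerob$.

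First I would use that $X$ is a polyhedral \emph{cone}: as observed after Proposition~\ref{prop:polyhedron_x_finite_circuits}, $\sigma_X(-\cA\nu)\in\{0,\infty\}$, so $\sigma_X(-\cA\nu)=0$ (equivalently $\cA\nu\in X^*$) for every $X$-circuit $\nu$. Hence the extended form of every normalized $X$-circuit is $(\nu,0)$, and $G_X(\cA)$ is simply the product cone $\cone\Lambda_X(\cA)\times\R_{\ge 0}$; since the second factor is pointed, this means $\lambda$ is a reduced $X$-circuit exactly when it spans an extreme ray of $\cone\Lambda_X(\cA)$ (the factor $\R_{\ge 0}$ only accounts for the additional extreme ray $(\zerob,1)$). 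The identical observation applies with $X$ replaced by $\R^n$, so the normalized reduced $\R^n$-circuits are exactly the normalized generators of the extreme rays of $\cone\Lambda_{\R^n}(\cA)$.

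Next I would identify $\cone\Lambda_{\R^n}(\cA)$ as a face of $\cone\Lambda_X(\cA)$. Theorem~\ref{th:connect-to-rn-1} gives $\cone\Lambda_{\R^n}(\cA)\subseteq\cone\Lambda_X(\cA)\cap\ker\cA$; conversely, writing $\mu\in\cone\Lambda_X(\cA)\cap\ker\cA$ as $\mu=\sum_i t_i\lambda^{(i)}$ with $t_i\ge 0$ and $\lambda^{(i)}\in\Lambda_X(\cA)$, the identity $\zerob=\cA\mu=\sum_i t_i\,\cA\lambda^{(i)}$ is a vanishing nonnegative combination of vectors $\cA\lambda^{(i)}\in X^*$; since $X$ is $n$-dimensional, $X^*$ is pointed, forcing $\cA\lambda^{(i)}=\zerob$ for every $i$ with $t_i>0$, and then Theorem~\ref{th:connect-to-rn-1} puts these $\lambda^{(i)}$ into $\Lambda_{\R^n}(\cA)$, so $\mu\in\cone\Lambda_{\R^n}(\cA)$. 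Thus $\cone\Lambda_{\R^n}(\cA)=\cA^{-1}(\{\zerob\})\cap\cone\Lambda_X(\cA)$, and because $\cA$ maps $\cone\Lambda_X(\cA)$ into $X^*$ while $\{\zerob\}$ is a face of the pointed cone $X^*$, this preimage is a face of $\cone\Lambda_X(\cA)$. Combining the three paragraphs: a normalized reduced $\R^n$-circuit $\lambda$ spans an extreme ray of $\cone\Lambda_{\R^n}(\cA)$, hence of $\cone\Lambda_X(\cA)$, hence $\lambda\in\Lambda_X^\star(\cA)$, and $\cA\lambda=\zerob$; this gives ``$\supseteq$''. The step I expect to be the crux, and the one that uses every hypothesis, is the face identification — in particular the implication $\cA\lambda^{(i)}\in X^*,\ \sum_i t_i\,\cA\lambda^{(i)}=\zerob\ \Rightarrow\ \cA\lambda^{(i)}=\zerob$, which needs $X^*$ pointed, i.e.\ $X$ full-dimensional; without full-dimensionality related statements already fail, cf.\ the remark following Lemma~\ref{prop:X_circuits_edge_cases}.
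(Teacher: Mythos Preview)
Your proof is correct and follows essentially the same approach as the paper. Both arguments hinge on the identical observation: if $\lambda=\sum_i t_i\lambda^{(i)}$ with each $\cA\lambda^{(i)}\in X^*$ and $\cA\lambda=\zerob$, then pointedness of $X^*$ (equivalently, full-dimensionality of $X$, encoded in the paper via $\ker W=\{\zerob\}$) forces $\cA\lambda^{(i)}=\zerob$, after which Theorem~\ref{th:connect-to-rn-1} places each $\lambda^{(i)}$ in $\Lambda_{\R^n}(\cA)$. Your packaging of this step as ``$\cone\Lambda_{\R^n}(\cA)$ is the face $\cA^{-1}(\{\zerob\})\cap\cone\Lambda_X(\cA)$'' is a clean reformulation, but the underlying mechanism is the same as the paper's direct contradiction argument.
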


\begin{proof}
By Corollary \ref{cor:convex_hull_empty}, every $\lambda\in\Lambda_X^\star(\cA)$ is contained in $\Lambda_{\R^n}^\star(\cA)$. Suppose there exists some $\lambda\in\Lambda_{\R^n}^\star(\cA)$ that is not contained in $\Lambda_X^\star(\cA)$. By Theorem \ref{th:connect-to-rn-1}, $\lambda\in\Lambda_X(\cA)$. As $\lambda\notin\Lambda_X^\star(\cA)$, there exist $m\in\N$ and
$X$-circuits $\nu^{(1)},\ldots,\nu^{(m)}$ which are non-proportional to $\lambda$
	and which satisfy
	$\sum_{i\le m}(\nu^{(i)},\sigma_X(- \cA \nu^{(i)}) )= 
	(\lambda,\sigma_X(- \cA \lambda))$. Since $\sigma_X(-\cA\lambda)=0$ and $\sigma_X(y) \in \{0,\infty\}$ for all $y\in\R^n$, we have
	$\sigma_X(- \cA \nu^{(i)}) =0$ for all $i\in[m]$.
	
	As in Theorem \ref{th:connect-to-rn-1}, denote by $W$ the $k \times n$-matrix
	whose rows are the transposed vectors $(v^{(1)})^T, \ldots, (v^{(k)})^T$. Again,
	\begin{align*}
	\sigma_X(-y) <  \infty \text{ if and only if } Wy\ge {\bf 0}.
	\end{align*}
	Since $\cA\lambda=W\cA\lambda=\mathbf{0}$, we obtain 
	$W{\cA}\nu^{(i)} = {\bf 0}$ and, as the kernel of $W$ is $\{\mathbf{0}\}$, further
	${\cA}\nu^{(i)} = {\bf 0}$ for all $i\in[m]$. Hence, 
	$\nu^{(i)}\in \Lambda_{\R^n}^\star(\cA)$ and therefore $\lambda\notin\Lambda_{\R^n}^\star(\cA)$, which is a contradiction.
\end{proof}

We illustrate the
applicability of Theorem~\ref{th:combinatorial-reducibility1}
in determining the reduced sublinear circuits by returning to the univariate example $X=[-1,1]$, 
which was started in Proposition \ref{prop:circuits-univariate2}.

\begin{thm}\label{thm:extremalXcircuits}
	Let $X=[-1,1]$ and $\cA=\{\alpha_1,\ldots,\alpha_m\}$ sorted ascendingly,
	where $m \ge 3$. Then, $\Lambda_X^\star(\cA)$ consists of the
	following sublinear circuits:
	\begin{enumerate}
		\item $\lambda=e^{(2)}-e^{(1)}$ or $\lambda=e^{(m-1)}-e^{(m)}$,
		or 
		\item $\displaystyle{\lambda=
		\frac{\alpha_{i-1}-\alpha_{i}}{\alpha_{i-1}-\alpha_{i+1}}
		e^{(i-1)}- e^{(i)}+
		\frac{\alpha_{i-1}-\alpha_{i}}{\alpha_{i-1}-\alpha_{i+1}}
		e^{(i+1)}}$
		for some $i\in\{2,\ldots, m-1\}$.
    \end{enumerate}
\end{thm}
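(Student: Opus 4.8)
\emph{Overall plan.} By Proposition~\ref{prop:circuits-univariate2} we already have the complete list of normalized $[-1,1]$-circuits, so the task is only to decide, for each of them, whether it is reduced. I will use two standing observations for $X=[-1,1]$: first, $\sigma_X(-\cA\nu)=|\cA\nu|$; second, a circuit $\mu^{(ijk)}:=\frac{\alpha_k-\alpha_j}{\alpha_k-\alpha_i}e^{(i)}-e^{(j)}+\frac{\alpha_j-\alpha_i}{\alpha_k-\alpha_i}e^{(k)}$ of the second type has barycentric coefficients, so $\cA\mu^{(ijk)}=\mathbf{0}$ and its extended form is $(\mu^{(ijk)},0)$, whereas the extended form of a two-term circuit $e^{(q)}-e^{(p)}$ is $(e^{(q)}-e^{(p)},|\alpha_p-\alpha_q|)$. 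I will also use that the second-type circuits are exactly the $\R$-circuits of~\eqref{eq:Rn_circuits}, and that those supported on three \emph{consecutive} points $\alpha_{i-1},\alpha_i,\alpha_{i+1}$ are precisely the elements of $\Lambda^\star_\R(\cA)$ (Proposition~\ref{pr:combinatorial-rn2}); these three-consecutive-point circuits are exactly the circuits listed in (2). The plan is then: (i) discard every circuit not listed in (1) or (2) by the exclusion criteria of this section, and (ii) prove directly that the circuits in (1) and (2) are reduced.

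\emph{Exclusion.} If $\lambda=e^{(q)}-e^{(p)}$ with $|p-q|\ge 2$, or $\lambda=\mu^{(ijk)}$ with $k\ge i+3$, then $\bigl(\conv(\supp\lambda)\cap\cA\bigr)\setminus\supp\lambda\neq\emptyset$, so $\lambda$ is not reduced by Corollary~\ref{cor:convex_hull_empty}; this kills all two-term circuits with non-adjacent support and all second-type circuits not listed in (2). For an "interior" adjacent two-term circuit $\lambda=e^{(i+1)}-e^{(i)}$ with $2\le i\le m-1$, I apply Theorem~\ref{th:combinatorial-reducibility1} with $\beta':=\alpha_{i-1}\in\cA\setminus\supp\lambda$ and $\lambda':=-e^{(i-1)}+\tfrac12e^{(i)}+\tfrac12e^{(i+1)}\in N_{\beta'}$: its positive support lies in $\supp\lambda$, and $\cA\lambda'=-\alpha_{i-1}+\tfrac12(\alpha_i+\alpha_{i+1})>0$ is a non-negative multiple of $\cA\lambda=\alpha_{i+1}-\alpha_i$, so $\lambda$ is not reduced. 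Symmetrically, taking $\beta':=\alpha_{i+1}$ shows that $e^{(i-1)}-e^{(i)}$ is not reduced for $2\le i\le m-1$. After this step the only surviving candidates are exactly the circuits in (1) and (2).

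\emph{The listed circuits are reduced.} For a circuit $\mu$ from (2), suppose its extended form $(\mu,0)$ is written as a non-negative combination $\sum_l c_l(\nu^{(l)},|\cA\nu^{(l)}|)+c_0(\mathbf{0},1)$ of generators of $G_X(\cA)$. Comparing last coordinates forces $c_0=0$ and $c_l=0$ whenever $\cA\nu^{(l)}\neq\mathbf{0}$, so $\mu=\sum_l c_l\nu^{(l)}$ is a non-negative combination of $\R$-circuits only; since $\mu\in\Lambda^\star_\R(\cA)$ spans an extreme ray of $\cone(\Lambda_\R(\cA))$ (this is exactly reducedness of $\mu$ as an $\R$-circuit, $G_\R(\cA)$ being pointed), every $\nu^{(l)}$ with $c_l>0$ equals $\mu$, and $\mu$ is reduced. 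For $\lambda=e^{(2)}-e^{(1)}$ I argue that $(e^{(2)}-e^{(1)},\alpha_2-\alpha_1)$ spans an extreme ray of $G_X(\cA)$ by producing a linear functional $\ell$ on $\R^{\cA}\times\R$ which is non-negative on every generator of $G_X(\cA)$ and vanishes there only on this one generator; a functional built from the strictly convex map $\alpha\mapsto\alpha^2$ does the job, e.g.\ $\ell(x,t)=\sum_l(\alpha_l^2+c\alpha_l)x_l+bt$ with $b=-(\alpha_1+\alpha_2+c)$ and $c$ sufficiently negative. One then checks $\ell(\mathbf{0},1)=b>0$, that on $(e^{(j)}-e^{(i)},\alpha_j-\alpha_i)$ with $i<j$ it equals $(\alpha_j-\alpha_i)(\alpha_i+\alpha_j-\alpha_1-\alpha_2)\ge 0$ with equality exactly at $(i,j)=(1,2)$, that on $(e^{(j)}-e^{(i)},\alpha_i-\alpha_j)$ with $i>j$ it is strictly positive by the choice of $c$, and that on $(\mu^{(ijk)},0)$ it equals $s(1-s)(\alpha_i-\alpha_k)^2>0$ for $s=\frac{\alpha_k-\alpha_j}{\alpha_k-\alpha_i}$. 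Hence $G_X(\cA)\cap\ker\ell$ is the ray through $(e^{(2)}-e^{(1)},\alpha_2-\alpha_1)$, so $e^{(2)}-e^{(1)}$ is reduced; finally $e^{(m-1)}-e^{(m)}$ is reduced by the reflection $x\mapsto-x$, which fixes $[-1,1]$, reverses the order on $\cA$ and carries $G_X(\cA)$ isomorphically onto the circuit graph of $-\cA$.

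\emph{Main obstacle.} The exclusion step and the "(2) is reduced" step are essentially bookkeeping on top of the results already available. The real difficulty is showing that the two boundary two-term circuits in (1) are extremal: none of the combinatorial reducibility criteria of this section applies to them, so one must go back to the definition of $G_X(\cA)$ and construct a separating functional, and the point where genuine work is needed is verifying that a single quadratic-type functional simultaneously controls all three families of generators of $G_X(\cA)$ with strict positivity away from the desired ray — this is exactly where strict convexity of $\alpha\mapsto\alpha^2$ enters.
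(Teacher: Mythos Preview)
Your proof is correct. The exclusion half matches the paper closely: Corollary~\ref{cor:convex_hull_empty} kills the non-adjacent two-term circuits and the non-consecutive three-term circuits, and Theorem~\ref{th:combinatorial-reducibility1} kills the interior adjacent two-term ones. The paper applies Theorem~\ref{th:combinatorial-reducibility1} with the two-term auxiliaries $\lambda'=e^{(i+1)}-e^{(1)}$ and $\lambda'=e^{(i-1)}-e^{(m)}$ rather than your three-term $\lambda'$, but either choice satisfies the hypotheses.

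Where your argument genuinely diverges is in showing that the surviving candidates \emph{are} reduced. The paper does this indirectly by a counting argument: after exclusion, exactly one candidate remains in each $\Lambda_X(\cA,\alpha_i)$; since $\rec(X)^*=\R$ forces $C_X(\cA,\alpha_i)\neq\emptyset$ for every $i$, and since the decomposition theory of \cite{mnt-2020} says each such cone is built from $\lambda$-witnessed AGE cones for \emph{reduced} circuits only, at least one reduced circuit with negative entry at $i$ must exist --- hence the unique remaining candidate is it. Your route is direct: for the three-term circuits you observe that any decomposition of $(\mu,0)$ in $G_X(\cA)$ can only use generators with vanishing last coordinate, i.e.\ $\R$-circuits, and then invoke $\mu\in\Lambda^\star_\R(\cA)$; for the two boundary two-term circuits you exhibit an explicit supporting hyperplane of $G_X(\cA)$ built from the strictly convex map $\alpha\mapsto\alpha^2$. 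The paper's argument is shorter and leans on the SAGE decomposition machinery (Propositions~\ref{prop:decomp1} and~\ref{prop:decomp2}); yours is more self-contained and produces an explicit extremality certificate, at the cost of the computation you flag as the ``main obstacle.''
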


Note that this gives, in particular,
\begin{align*}
\Lambda_X^\star(\cA)\cap \left\{\lambda\in \bigcup\limits_{\beta\in\cA} N_\beta: |\supp \lambda|=3\right\} \ = \ \Lambda_\R^\star(\cA).
\end{align*}

\begin{proof}[Proof of Theorem \ref{thm:extremalXcircuits}]
	By Proposition \ref{prop:circuits-univariate2} and Corollary \ref{cor:convex_hull_empty}, the only candidates for normalized
	reduced $X$-circuits are 
	\begin{enumerate}
		\item $\lambda=e^{(i)}-e^{(i\pm 1)}$ or
		\item $\displaystyle{\lambda=
		\frac{\alpha_i-\alpha_{i+1}}{\alpha_{i-1}-\alpha_{i+1}}
		e^{(i-1)}- e^{(i)} +
		\frac{\alpha_{i-1}-\alpha_{i}}{\alpha_{i-1}-\alpha_{i+1}}
		e^{(i+1)}}$ for some $i\in\{2,\ldots, m-1\}$.

	\end{enumerate}
	For every $X$-circuit $e^{(i+1)}-e^{(i)}$ with $i > 1$, the $X$-circuit $e^{(i+1)}-e^{(1)}$ satisfies the precondition of Theorem \ref{th:combinatorial-reducibility1} and for every $X$-circuit
	$e^{(i-1)}-e^{(i)}$ with $i < m$, the $X$-circuit $e^{(i-1)}-e^{(m)}$ satisfies the precondition of Theorem \ref{th:combinatorial-reducibility1}. Hence, all those $X$-circuits are not reduced. 
	
	We see that for all $i\in[m]$, there is precisely one normalized $X$-circuit $\lambda$ that appears in the listed set of \emph{possible} reduced $X$-circuits. As $\rec(X)^*=\R$, there exists at least one $X$-AGE signomial where the $i$-th coefficient is negative, hence 
 $C_X(\cA,\alpha_i)\ne\emptyset$ for all $i\in[m]$.
 As $C_X(\cA,\alpha_i)$ is the union of several $\lambda$-witnessed 
 $X$-AGE cones and those cones can be solely represented by reduced $X$-circuits (compare \cite{mnt-2020}, Sections $4$ and $5$), for every $i\in[m]$ there exists at least one reduced 
 $X$-circuit in $C_X(\cA,\alpha_i)$.
With this, the statement follows.
\end{proof}

\begin{proof}[Proof of Theorem~\ref{th:combinatorial-reducibility1}]
		Since $\lambda$ and $\lambda'$ are normalized elements in $N_\beta$ and $N_{\beta'}$, we have
	\[
	\begin{array}{rcll@{}l}
	\sum_{\alpha \in \lambda^+} \lambda_{\alpha} & = & 1 \text{ and }
	& \lambda_{\beta} = -1, \; & \lambda_{\alpha} \ge 0 \text{ for } \alpha \in 
	\mathcal{A} \setminus \{\beta\}, \\
	\sum_{\alpha \in (\lambda')^+} \lambda'_{\alpha} & = & 1 \text{ and }
	& \lambda'_{\beta'} = -1, \; & \lambda'_{\alpha} \ge 0 \text{ for } \alpha \in 
	\mathcal{A} \setminus \{\beta'\}.
	\end{array}
	\]

	Let $\tau$ be the maximal real number in $[0,1/\gamma]$ (with 
	the convention $1/\gamma:=\infty$ if $\gamma =0$) 
	such that
	$\nu^{(1)} := \lambda - \tau \lambda' \in N_{\beta}$.
	That maximum clearly exists,
	and, since $(\lambda')^+ \subset \supp \lambda$, the number 
	$\tau$ is positive.
	Moreover, since $\lambda$ and $\lambda'$ are normalized and distinct, we
	have $\tau  < 1$. 
	
	Similarly, let $\tau'$ be the maximal real number 
	in $[0,\gamma]$ with
	$\nu^{(2)} := 
\lambda' - \tau' \lambda \in N_{\beta'}$. Here, we have $0 \le \tau' \le 1$ 
(and, in particular, $\tau'=0$ if $\gamma=0$ or $(\lambda')^+\subsetneq\lambda^+$).
	Hence, $\nu^{(1)} \in N_{\beta},\nu^{(2)} \in N_{\beta'}$ and $1-\tau\tau'\in(0,1]$.
	
	Since $\nu^{(1)} + \tau \nu^{(2)} = \lambda - \tau \lambda' +\tau\lambda' - \tau\tau' \lambda
	=(1 - \tau \tau') \lambda$,
	we see that $\lambda$ 
	can be written as a conic combination of the two non-proportional
	(not necessarily normalized) elements $\nu^{(1)} \in N_{\beta}$ 
	and $\nu^{(2)} \in N_{\beta'}$.
	Due to $\mathcal{A} \lambda' = \gamma\mathcal{A} \lambda$ and as both, $1-\tau\gamma\ge 0$ and $\gamma-\tau'\ge 0$, we obtain
	\begin{eqnarray*}
		\sigma_X(-\mathcal{A} \nu^{(1)}) & = &
		\sigma_X(-\mathcal{A} \lambda + \tau\mathcal{A} \lambda')=\sigma_X(-\mathcal{A} \lambda + \tau\gamma\mathcal{A} \lambda)\\
		&	= &  (1-\tau\gamma)\sigma_X(-\mathcal{A} \lambda )
		= \sigma_X(-\mathcal{A} \lambda) - \tau \sigma_X(-\mathcal{A} \lambda'), \\
		\sigma_X(-\mathcal{A} \nu^{(2)}) & = &
		\sigma_X(-\mathcal{A} \lambda' + \tau' \mathcal{A} \lambda) =	\sigma_X(-\gamma\mathcal{A} \lambda + \tau' \mathcal{A} \lambda) 
		\\
	&	= & (\gamma-\tau')\sigma_X(-\mathcal{A} \lambda)
		=\sigma_X(-\mathcal{A} \lambda') - \tau' \sigma_X(-\mathcal{A} \lambda)
	\end{eqnarray*}
	and further
	\begin{eqnarray*}
		\sigma_X(-\mathcal{A}\lambda)
		& = & \frac{1}{1 - \tau \tau'}
		\left(
		\sigma_X \left( - \cA \lambda \right)
		- \tau
		\sigma_X \left( - \cA \lambda' \right)
		+ \tau
		\sigma_X \left( - \cA \lambda' \right)
		- \tau \tau'
		\sigma_X \left( - \cA \lambda \right) \right) \\
		& = &
		\frac{1}{1 - \tau \tau'} \left(
		\sigma_X \left( - \cA \nu^{(1)} \right)
		+ 
		\tau \sigma_X \left( - \cA \nu^{(2)} \right)
		\right),
	\end{eqnarray*}
	which shows that $(\lambda,\sigma_X(-\mathcal{A}\lambda))$ does not
	generate an extreme ray in $G_X(\mathcal{A})$.
	By definition of a reduced sublinear circuit, $\lambda \in \Lambda_X^\star(\cA)$.

\end{proof}

As a consequence of the results in this section, we can give an exact 
characterization of the extreme rays of the $[-1,1]$-SAGE cone.

\begin{thm}\label{extreme_ray_one_dim_polytope}
	Let $X=[-1,1]$ and $\cA=\{\alpha_1,\ldots,\alpha_m\}$ be sorted ascendingly,
	where $m \ge 3$. The extremal rays of $C_X(\cA)$ are the following:
	\begin{enumerate}
		\item $\R_+\cdot\left( \exp(\alpha_2x)-\exp(\alpha_{1}-\alpha_2) 
		\exp(\alpha_1x) \right)$,
		\item $\R_+\cdot\left( \exp(\alpha_{m-1}x)-\exp(\alpha_{m-1}-\alpha_{m})
		\exp(\alpha_{m}x) \right)$,
		\item $\R_+\cdot \{c_{i-1}\exp(\alpha_{i-1}x)+c_{i}\exp(\alpha_{i}x)+c_{i+1}\exp(\alpha_{i+1}x)\}$, with
		\begin{align*}
		& 	c_{i-1}>0, \; c_{i+1}>0 \text{ and }c_i=-\left(\frac{c_{i-1}}{\lambda_{i-1}}\right)^{\lambda_{i-1}}\left(\frac{c_{i+1}}{\lambda_{i+1}}\right)^{\lambda_{i+1}}, \text{ where} \nonumber \\
		& \lambda_{i-1}=\frac{\alpha_{i+1}-\alpha_{i}}{\alpha_{i+1}-\alpha_{i-1}},
		\, \lambda_{i+1}=\frac{\alpha_{i}-\alpha_{i-1}}{\alpha_{i+1}-\alpha_{i-1}}
		\text{ and } 
		\alpha_{i-1} - \alpha_{i+1} \le \ln \frac{c_{i-1}\lambda_{i+1}}{c_{i+1}\lambda_{i-1}}
		\le \alpha_{i+1} - \alpha_{i-1}.
		\end{align*}
	\end{enumerate}
\end{thm}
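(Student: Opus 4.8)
The plan is to combine the irredundant Minkowski decomposition from Proposition~\ref{prop:decomp2} with the classification of reduced $[-1,1]$-circuits in Theorem~\ref{thm:extremalXcircuits}, and then decide, ray by ray, which extreme ray of which summand survives as an extreme ray of the whole cone.

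First I would use Proposition~\ref{prop:decomp2} and Theorem~\ref{thm:extremalXcircuits} to write
\[
  C_X(\cA) \ = \ C_X(\cA,\, e^{(2)}-e^{(1)}) \ + \ C_X(\cA,\, e^{(m-1)}-e^{(m)}) \ + \ \sum_{i=2}^{m-1} C_X(\cA,\, \mu^{(i)}),
\]
where $\mu^{(i)}$ is the reduced three-term circuit with $(\mu^{(i)})^- = \{\alpha_i\}$. Since every extreme ray of a finite Minkowski sum of cones generates an extreme ray of one of the summands, it is enough to determine the extreme rays of each summand from the power-cone description~\eqref{eq:weighted_prim_x_age_powercone} and then test them in $C_X(\cA)$. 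Here one needs $\sigma_X(-\cA(e^{(2)}-e^{(1)})) = \alpha_2-\alpha_1$, $\sigma_X(-\cA(e^{(m-1)}-e^{(m)})) = \alpha_m-\alpha_{m-1}$, and $\sigma_X(-\cA\mu^{(i)}) = 0$ (because $\cA\mu^{(i)} = \zerob$). Each summand is then, as a cone, a product of a nonnegative orthant $\R_+^{\cA\setminus\supp\lambda}$ with the two-, resp.\ three-dimensional, (power) cone on the coordinates of $\supp\lambda$, and reading off its extreme rays yields precisely: all monomials $\exp(\alpha^Tx)$, $\alpha\in\cA$; the two two-term signomials of types (1) and (2); and, for each $i$, the one-parameter family of ``tight'' three-term signomials from (3) — but at this stage \emph{without} the logarithmic two-sided inequality.

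Next I would eliminate the monomials and the spurious three-term rays. No monomial is extreme in $C_X(\cA)$: since $\rec([-1,1])^{*}=\R$, for every $\beta\in\cA$ there is a two-term $X$-circuit with negative entry at $\beta$ (Proposition~\ref{prop:circuits-univariate2}), and the corresponding witnessed AGE cone lets one split $\exp(\beta^Tx)=g+h$ with $g$ carrying a small negative coefficient and $h$ only nonnegative coefficients, both non-proportional to the monomial. For a tight three-term signomial $f$ as in (3), the weighted AM--GM inequality behind~\eqref{eq:weighted_prim_x_age_powercone} gives $f\ge 0$ on all of $\R$ with equality only at the single point $x^\star = \ln\!\bigl(\tfrac{c_{i-1}\lambda_{i+1}}{c_{i+1}\lambda_{i-1}}\bigr)\big/(\alpha_{i+1}-\alpha_{i-1})$, and the condition $x^\star\in[-1,1]$ is exactly the displayed logarithmic inequality. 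If $x^\star\notin[-1,1]$, then $f$ is strictly positive on $[-1,1]$; since moreover $c_{i-1},c_{i+1}>0$, the point $f$ lies in the interior, taken within $\R^{\{\alpha_{i-1},\alpha_i,\alpha_{i+1}\}}$, of the full-dimensional subcone $C_X(\{\alpha_{i-1},\alpha_i,\alpha_{i+1}\},\alpha_i)\subseteq C_X(\cA)$, so $f$ is a nontrivial combination of two non-proportional elements of $C_X(\cA)$ and is not extreme.

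Finally I would prove that the remaining candidates — the two-term signomials of types (1),(2), which vanish at $x=-1$ resp.\ $x=1$, and the three-term signomials of (3) with $x^\star\in[-1,1]$ — are extreme. Given a decomposition $f=g+h$ in $C_X(\cA)$, evaluating at $x^\star\in X$ forces $g(x^\star)=h(x^\star)=0$, hence every AGE summand of $g$ and of $h$ (each being nonnegative on $X$) vanishes at $x^\star$ as well; using that the circuit underlying $f$ is reduced together with linear independence of the exponentials on $[-1,1]$, one argues that $g$ and $h$ must be supported on $\supp f$ and lie on the one-dimensional face of $C_X(\supp f)$ through $f$ (the ratio of the two positive coefficients being pinned down by $x^\star$ via the AM--GM equality, resp.\ by the single defining constraint in the two-term case), so $g,h\in\R_+ f$. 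The step I expect to be the main obstacle is exactly this last support restriction — ruling out AGE decompositions that secretly use exponents outside $\supp f$ with cancellation, or a different circuit sharing the vanishing point $x^\star$. The clean way is to pass to the dual $X$-SAGE cone and show that the functionals supporting $C_X(\cA)$ at $f$ span a hyperplane; because $f$ vanishes at $x^\star$ and its circuit is reduced, this should localize the computation to the three (resp.\ two) coordinates of $\supp f$, where the power-cone geometry of~\eqref{eq:weighted_prim_x_age_powercone} is explicit.
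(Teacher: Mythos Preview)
Your reduction via Proposition~\ref{prop:decomp2} and Theorem~\ref{thm:extremalXcircuits}, the description of the extreme rays of each $\lambda$-witnessed cone, and the elimination of monomials and of three-term candidates with $x^\star\notin[-1,1]$ are all correct and match the paper's route (the paper packages the monomial argument as Lemma~\ref{prop:atomicextremerays} and cites \cite[Theorem~6.1]{mnt-2020} for the location of the zero, but the content is the same).

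The substantive divergence is in the final extremality step, where you try to pin down the support of the summands in an arbitrary decomposition $f=g+h$ by evaluating at $x^\star$ and then invoking a dual argument you only sketch. As you already anticipate, that support restriction is the genuine difficulty, and your proposal does not actually close it: an AGE summand of $g$ supported on a different consecutive triple $\{\alpha_{j-1},\alpha_j,\alpha_{j+1}\}$ can perfectly well vanish at $x^\star$, so reducedness of the circuit of $f$ and linear independence of the exponentials alone do not force $\supp g\subseteq\supp f$. The dual computation you allude to can presumably be made to work, but it is not carried out, so as written the proof has a gap.

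The paper avoids this obstacle entirely by \emph{separating the pieces by type rather than by support}. Once a putative decomposition is expanded via Proposition~\ref{prop:decomp2} into summands from the reduced $\lambda$-witnessed cones, each summand is either a two-term AGE function of type~(1) or~(2), or a three-term AGE function coming from some $\mu^{(j)}$. Two-term pieces are ruled out by positivity (and, at the boundary $x^\star\in\{-1,1\}$, by the extra condition $f'(x^\star)=0$, while the two-term pieces have nonzero derivative at their unique zero). For the three-term pieces one observes that $\cA\mu^{(j)}=\mathbf{0}$, so every such summand is nonnegative on all of $\R$; hence a decomposition into three-term pieces would already be a decomposition inside $C_\R(\cA)$, contradicting the known extremality of $f$ in the \emph{unconstrained} SAGE cone \cite[Proposition~4.4]{knt-2020}. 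For the two-term extremals of type~(1) (and symmetrically~(2)) the paper uses an even cruder observation: any conic combination of three-term AGE functions and of type-(2) functions has a \emph{nonnegative} coefficient on the lowest exponent $\alpha_1$, whereas $f$ has a negative one. These two tricks replace your dual computation and are what you should use to finish the argument.
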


We first deal with the atomic extreme rays, that is, extreme rays which are supported
on a single element. These extreme rays are not captured by the $X$-circuit view.

\begin{lemma}[Atomic extreme rays of $C_X(\cA)$ for compact sets $X$]\label{prop:atomicextremerays}
	Let $X\subset \R^n$ be a compact set 
	and $\cA\subset \R^n$ finite with $|\cA|\ge 2$. Then, there are no atomic extreme rays of $C_X(\cA)$.
\end{lemma}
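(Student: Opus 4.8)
The plan is to prove this constructively: I will show that every \emph{atomic} element of $C_X(\cA)$ can be written as a sum of two elements of $C_X(\cA)$, neither of which is a nonnegative multiple of it, so that it cannot span an extreme ray. Since $X$ is nonempty (as is standing convention; and if $X=\emptyset$ then $C_X(\cA)$ is a linear space with no extreme rays at all), an atomic signomial in $C_X(\cA)$ has the form $f = c\,\exp(\alpha^T x)$ for some $\alpha \in \cA$ and some $c>0$ — a nonpositive coefficient would force $f \le 0$, resp.\ $f\equiv 0$, on $X$. Because $|\cA| \ge 2$, I would fix another exponent $\beta \in \cA \setminus \{\alpha\}$, and the idea is to peel off a tiny multiple of the monomial $\exp(\beta^T x)$ from $f$ while keeping the remainder nonnegative on $X$; compactness of $X$ is exactly what makes this possible.

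Concretely, since $X$ is compact we have $M := \sigma_X(\beta - \alpha) < \infty$, hence $(\alpha-\beta)^T x \ge -M$ for all $x \in X$. I would then pick any $\varepsilon$ with $0 < \varepsilon \le \e^{-M}$ and set
\[
  g \ := \ c\,\exp(\alpha^T x) - c\varepsilon\,\exp(\beta^T x), \qquad
  h \ := \ c\varepsilon\,\exp(\beta^T x),
\]
so that $f = g + h$. For $x \in X$ we get $g = c\,\exp(\beta^T x)\bigl(\exp((\alpha-\beta)^T x) - \varepsilon\bigr) \ge c\,\exp(\beta^T x)(\e^{-M} - \varepsilon) \ge 0$; since in addition $g$ has exactly one negative coefficient, the one indexed by $\beta$, it lies in $C_X(\cA,\beta) \subseteq C_X(\cA)$. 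The monomial $h$ has no negative coefficient and is trivially nonnegative on $X$, so $h \in C_X(\cA,\alpha) \subseteq C_X(\cA)$ as well.

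Finally I would note that neither summand lies on the ray $\R_+ f$: we have $\supp h = \{\beta\} \ne \{\alpha\} = \supp f$, and since $c\varepsilon > 0$ the $\beta$-coefficient of $g$ is nonzero, so $\supp g = \{\alpha,\beta\} \ne \{\alpha\}$; hence neither $g$ nor $h$ is a nonnegative scalar multiple of $f$. Therefore $\R_+ f$ is not an extreme ray of $C_X(\cA)$, and since $f$ was an arbitrary atomic generator, $C_X(\cA)$ has no atomic extreme rays. I do not anticipate a real obstacle in this argument; the only points requiring a word of care are the sign of the coefficient of an atomic element (settled by $X \neq \emptyset$) and the trivial degenerate case $X = \emptyset$, both of which are dispatched in one line.
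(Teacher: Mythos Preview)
Your proof is correct and takes essentially the same approach as the paper: both decompose the atomic signomial $c\,\exp(\alpha^T x)$ as a two-term AGE function $c\,\exp(\alpha^T x) - c\varepsilon\,\exp(\beta^T x)$ plus the monomial $c\varepsilon\,\exp(\beta^T x)$, using compactness of $X$ to ensure $\sigma_X(\beta-\alpha)<\infty$. The only cosmetic difference is that the paper phrases the nonnegativity of the two-term piece via the $X$-circuit $\lambda=e^{(\alpha)}-e^{(\beta)}$ (and takes the extremal choice $\varepsilon = \exp(-\sigma_X(\beta-\alpha))$), whereas you verify it directly from the support-function bound.
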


\begin{proof} As in Lemma \ref{prop:X_circuits_edge_cases}, we use invariance of the $X$-circuits under translation of $X$ and can w.l.o.g.\ assume ${\bf 0}\in X$. Let $\alpha \neq \beta\in\cA$ arbitrary. 	
	Assume that $f=c_{\alpha}\exp(\alpha^Tx)$ with $c_{\alpha}>0$
	is extremal. 
	We observe that $\lambda\in  N_\beta$ with $\lambda_\alpha=1=-\lambda_\beta$ is an $X$-circuit inducing the ray 
	\begin{align*}
	\R_+\cdot \left(\exp(\alpha^Tx)-\frac{1}{\exp(s)}\exp(\beta^Tx)\right),
	\end{align*}
	where $s\ge 0$ is finite and such that
	$
	\sigma_X(-\cA\lambda))=s.
	$
	Hence,  the $X$-AGE signomials
	\begin{align*}
	f^{(1)}= c_\alpha \exp(\alpha^Tx) -\frac{c_\alpha}{\exp(s)} \exp(\beta^Tx), f^{(2)}=\frac{c_\alpha}{\exp(s)}\exp(\beta^Tx)
	\end{align*}
	sum to $f$, contradicting the extremality of $f$.
\end{proof}

\begin{proof}[Proof of Theorem \ref{extreme_ray_one_dim_polytope}]
	Let $\cA = \{\alpha_1,\ldots,\alpha_m\}$ be sorted ascendingly.
	By Lemma \ref{prop:atomicextremerays}, there are no atomic extreme rays,
	and by Theorem \ref{thm:extremalXcircuits} and Proposition \ref{prop:decomp2},
	all the extreme rays are supported on two or three elements.
	
	We start by considering the 2-term case. By
	Lemma~\ref{thm:extremalXcircuits}, the only candidates for the
	extreme rays are the ones given in the cases (1) and (2). Since these
	cases are symmetric, it suffices to consider case $(1)$, i.e.,  $f(x) = \exp(\alpha_2 x) - \exp(\alpha_1-\alpha_2)\exp(\alpha_1 x)$. Any conic combination of 3-term
	AGE functions and of functions of case (2) has a lowest-exponent term with
	positive coefficient. Hence,
	$f$ cannot be written as a convex 
	combination of 3-term AGE 
	functions and of functions of case~(2). Thus, $f$ indeed is extremal.
	
	Now consider the 3-term case. By 
	Lemma~\ref{thm:extremalXcircuits}, the only candidates for extreme
	rays are of the form
	$f(x)=c_{i-1}\exp(\alpha_{i-1}x)+c_{i}\exp(\alpha_{i}x)+
	c_{i+1}\exp(\alpha_{i+1}x)$ with $c_{i-1} > 0$, $c_{i+1} > 0$
	and $c_i < 0$.
	The proof in \cite[Theorem 6.1]{mnt-2020} shows that $f$ must have 
	a zero in $[-1,1]$ and that the location $x^*$ of the zero is
	\[
	x^* \ = \ \ln \left( \frac{c_{i-1} \lambda_{i+1}}{c_{i+1} \lambda_{i-1}}
	\right) / (\alpha_{i+1} - \alpha_{i-1}),
	\]
	where $\lambda_{i-1}$ and $\lambda_{i+1}$ are defined as in case (3)
	of the theorem. This gives the defining condition for $c_i$ as well as the inequality
	conditions in case (3).
	
	Any decomposition of $f$ cannot involve a 2-term AGE function. For
	$x^* \in (-1,1)$, this follows from the strict positivity of the 2-term AGE
	functions of type (1) and (2). For the boundary situations 
	$x^* \in \{-1,1\}$, we can additionally
	use the derivative condition $f'(x^*) = 0$ to exclude the 
	2-term AGE functions.
	
	It remains to show that the 3-term AGE function
	$f$ cannot be decomposed in terms of 3-term
	AGE functions. However, since $f$ has a zero in $[-1,1]$ and thus in $\R$,
	it induces an extremal ray of the cone $C_{\R}(\cA)$ and cannot be decomposed
	using only 3-term AGE functions by \cite[Proposition 4.4]{knt-2020}.

\end{proof}

\begin{example} \emph{The reduced sublinear circuits for the cube $[-1,1]^2$.}
We consider again the support $\mathcal{A} = \{ (i,j) \, : \, 1 \le i, j \le k \}$
for some $k \in \N$.
In the case $k=3$, there are 24 normalized reduced $X$-circuits, which
come in the following classes:
\begin{enumerate}
\item 12 sublinear circuits with entries $1,-1$, namely,
\begin{enumerate}
  \item 8 with entry $-1$ in a corner and entry $+1$ beside or below the corner,
  \item 4 with entry $-1$ in a non-corner boundary entry and entry 
    $+1$ in the central, interior entry,
\end{enumerate}
\item 8 sublinear circuits, where the sequence 
  $\frac{1}{2},-1,\frac{1}{2}$ appears in a 
  row (3 possibilities), in a column (3 possibilities) or on the diagonal or
  the antidiagonal,
\item 4 sublinear circuits supported on 4 elements, namely
\[
    \left( \begin{array}{rrr}
      0 &  1/3 & 0 \\
      1/3 &  -1 & 0 \\
      0 &  0 & 1/3
    \end{array} \right)
\]
as well as the 90-degree, 180-degree and 270-degree rotation of thesis
matrix.

Note that, when starting from the set of all sublinear circuits $\lambda$
for $[-1,1]^2$, Theorem~\ref{th:combinatorial-reducibility1} is applicable to rule out 
that $\lambda$ is reduced in a number of cases. For example, the matrices
\[
   \begin{array}{cccccc}
    \left( \begin{array}{rrr}
      0 &  0 & 1/2 \\
      1/2 &  -1 & 0 \\
      0 &  0 & 0
    \end{array} \right),
   &
      \left( \begin{array}{rrr}
      0 & 0 & 1/2 \\
      1/2 &  0 & 0 \\
      0 &  -1 & 0
    \end{array} \right)
  \end{array}
\]
represent sublinear circuits $\lambda$ and $\lambda'$ with 
$\cA \lambda = (-1/2,0)^T$ and $\cA \lambda' = (-3/2,0)^T$, 
to which Theorem~\ref{th:combinatorial-reducibility1} 
can be applied in order to show that $\lambda$ is not reduced.

Also note that all reduced $\R^2$-circuits for the support set $\mathcal{A}$ 
are also reduced $[-1,1]^2$-circuits. Namely, since for all other 
$[-1,1]^2$-circuits $\lambda$, we have $\sigma_X(-\cA \lambda) \neq \mathbf{0}$,
those circuits cannot be used to decompose an $\R^2$-circuit (which
has $\sigma_X(-\cA \lambda) = \mathbf{0}$).
\end{enumerate}

In the case $k=4$ with 16 support points, a computer calculation shows 
that there are 72 reduced sublinear circuits.
\end{example}

\section{Conclusion and outlook\label{se:conclusion}}

We have studied the connection of sublinear circuits and their supports
and the sublinear circuits for polyhedral sets $X$.
Since for polyhedral sets, the number of $X$-circuits is finite, this allows
to apply polyhedral and combinatorial techniques. In particular the
$X$-SAGE cones can be decomposed into a finite number of power 
cones, which arise from the reduced sublinear circuits.

For non-polyhedral sets $X$, in general the number of $X$-circuits is not 
finite anymore. It remains a future task to study necessary and sufficient
criteria for sublinear circuits of structured non-polyhedral sets, such as sets with 
symmetry; for recent work
on symmetric SAGE-based optimization see \cite{symmetry-amgm}.
In a different direction,
Forsg{\aa}rd and de Wolff \cite{FdW-2019} have characterized
the boundary of the SAGE cone through a connection between circuits
and tropical geometry. It also remains for future work to establish a
generalization of this, aiming at connecting the conditional SAGE cone and sublinear circuits
to tropical geometry.

\bibliography{bib-sublinear}
\bibliographystyle{plain}

 \end{document}